\newtheorem{thm}{Theorem}[section]
\newtheorem{prop}[thm]{Proposition}
\newtheorem{cor}[thm]{Corollary}
\newtheorem{lem}[thm]{Lemma}
\theoremstyle{definition}
\newtheorem{defn}[thm]{Definition}
\newcounter{labelflag} \setcounter{labelflag}{0}
\newcommand{\Label}[1]{
                       \ifnum\thelabelflag=1
                          \ifmmode
                             \makebox[0in][l]{\qquad\fbox{\rm#1}}
                          \else
                             \marginpar{\vspace{0.7\baselineskip}
                                        \hspace{-1.1\textwidth}
                                        \fbox{\rm#1}}
                          \fi
                       \fi
                       \label{#1}
                      }
\def \thtwot {{  \theta_{t}  }}
\def\R{\mathbb{R}}
\newcommand{\be}{\begin{equation}}
\newcommand{\ee}{\end{equation}}
\def \calf {{  {\mathcal{F}} }}
\def \calftwo {{  {\mathcal{F}}  }}
\def \cala {{  {\mathcal{A}}  }}
\def \calb {{  {\mathcal{B}}  }}
\def \cald {{  {\mathcal{D}}  }}
\def \caln {{  {\mathcal{N}}  }}
\begin{document}

\baselineskip=1.38\baselineskip

\begin{titlepage}
\title{\large  \bf \baselineskip=1.3\baselineskip   
Stochastic Bifurcation of Pathwise  Random Almost 
Periodic and  Almost Automorphic Solutions
for Random Dynamical Systems}

\vspace{10 mm}

\author{ 
         Bixiang Wang  \vspace{4mm}\\
Department of Mathematics, \ 
 New Mexico Institute of Mining and Technology  \vspace{1mm}\\ 
Socorro,  NM~87801, USA  \vspace{3mm}\\
Email:    bwang@nmt.edu  }

\date{}
\end{titlepage}

\maketitle

\noindent
 
\begin{abstract}  \baselineskip=1.3\baselineskip
In this paper, we introduce concepts of  
pathwise random almost periodic and 
   almost automorphic solutions 
 for  dynamical systems generated
 by non-autonomous stochastic equations.
 These solutions are pathwise  stochastic analogues of deterministic 
 dynamical systems. The existence and bifurcation of random
 periodic (random almost periodic, 
 random almost automorphic) solutions have been established
 for a one-dimensional   stochastic equation
 with multiplicative noise.
\end{abstract}

{\bf Key words.}       Pullback  attractor;  random periodic solution;
random almost periodic solution; random automorphic solution;
stochastic bifurcation.

 {\bf MSC 2000.} Primary 37H20.  Secondary 37G35,  34C23.

\section{Introduction}
\setcounter{equation}{0}

 This paper is concerned with almost periodic
 and almost automorphic  dynamics of random
 dynamical systems associated with
 stochastic differential equations 
 driven by time-dependent deterministic forcing.
 We will first define pathwise random almost periodic
 solutions  and    almost automorphic solutions 
 for such systems, which are 
 special  cases of random complete solutions
 and random complete quasi-solutions.
 We then study existence,     stochastic 
 pitchfork and transcritical bifurcation of  these
 types of   solutions
 for 
 one-dimensional non-autonomous stochastic equations.

Almost  periodic  and almost automorphic solutions of
 deterministic differential equations
  have been extensively studied by
many experts, see, e.g.,  
 \cite{fin1,  lev1, liu1,  sel1, she1, she2, she3, she4, she5,  wangyi1,  war1,
 yos1, zai1}
and the references therein.
In particular,  the $\omega$-limit sets of such solutions
have been investigated  in \cite{ liu1, she1, 
she2, she3, she4, she5, wangyi1}.
However, as far as   the author is  aware, it seems that
there is no result  available
in the literature on existence and stability of
{\it pathwise}  random almost periodic or almost automorphic
solutions for stochastic equations.
The first goal of the present paper is to introduce these concepts
for random dynamical systems generated by
non-autonomous stochastic equations.
Roughly speaking, a pathwise random almost periodic
(almost automorphic)  solution is a  random complete
quasi-solution which is pathwise almost periodic (almost automorphic)
(see Definitions \ref{comporbit} and \ref{persol}
below).
It is worth mentioning  that 
    a pathwise random almost periodic
(almost automorphic)  solution is  actually  {\it  not }
a solution of the system for a fixed sample path,
and it is just a    complete
quasi-solution 
in the sense of Definition \ref{comporbit}.
In this paper, in addition to existence of pathwise
random periodic (almost periodic, almost automorphic)
solutions, we will also study  the stability and bifurcation
of these solutions.  More precisely, we 
will investigate stochastic pitchfork bifurcation
of the  one-dimensional non-autonomous  equation
 \be
 \label{intr1}
 {\frac {dx}{dt}} =\lambda x -\beta (t) x^3
 + \gamma (t,x) + \delta x  \circ {\frac {d\omega}{dt}},
 \ee
 and transcritical bifurcation of the equation
  \be
 \label{intr2}
 {\frac {dx}{dt}} =\lambda x -\beta (t) x^2
 + \gamma (t,x) + \delta x  \circ {\frac {d\omega}{dt}},
 \ee
 where  
   $\lambda$ and $\delta$  
 are constants,    $\beta: \R \to \R$  
 is positive, and $\gamma: \R \times \R \to \R$ satisfies
 some growth conditions.
 The stochastic equations \eqref{intr1} and \eqref{intr2}
 are understood in the sense of Stratonovich integration.
 
 In the deterministic case (i.e., $\delta =0$), 
 these equations are classical examples for  
 demonstrating pitchfork and transcritical bifurcation
 of fixed points.
 In the stochastic case with constant $\beta =1$  and
 $\gamma =0$, the stochastic bifurcation
 of stationary solutions 
 and invariant measures  of \eqref{intr1}-\eqref{intr2}
 has been studied  in \cite{arn1, arn2}.
 In the real noise case, the same problem was discussed
 in \cite{xu1}.
 When $\beta=1$ and $\gamma$ 
 is time-independent,
 the bifurcation of stationary solutions of
 \eqref{intr1}-\eqref{intr2} 
   was examined in \cite{arn1, arn3}.
   For the bifurcation of stationary solutions
   of
   \eqref{intr1}
   with additive noise, we refer the reader to
   \cite{cra2}.
   It seems   that the bifurcation problem of
   \eqref{intr1} and \eqref{intr2}
   has not been studied in the literature
   when $\beta$ and $\gamma$ are time-dependent.
   The purpose of this paper
   is to investigate this problem and explore bifurcation
   of pathwise random complete solutions including
   random periodic (almost periodic, almost  automorphic)
   solutions.  Actually,  for time-dependent $\beta$
   and $\gamma$ satisfying certain conditions,
   we prove the pathwise complete quasi-solutions of
   \eqref{intr1} undergo a stochastic pitchfork bifurcation
   at $\lambda =0$: 
     for $\lambda \le 0$,  the zero solution      is the 
     unique   random complete quasi-solution
     of \eqref{intr1} which is pullback asymptotically stable
     in $\R$;  for   $\lambda>0$,    the zero solution 
     is unstable and two more  tempered
     random complete quasi-solutions
     $x^+_\lambda >  0$  and $x^-_\lambda  < 0$
     bifurcate from zero, i.e., 
   $$
     \lim_{\lambda \to 0}
     x^\pm_\lambda (\tau, \omega) = 0,
     \quad \text{  for all } \ \tau \in \R
     \ \text{  and }  \omega \in \Omega.
    $$
    The tempered random attractor $\cala_\lambda$
     of \eqref{intr1} is trivial for $\lambda \le 0$,
     and  is  given by 
     $\cala_\lambda =\{  
      [x^-_\lambda (\tau, \omega), 
     x^+_\lambda (\tau, \omega) ]
      : \tau \in \R,
     \omega \in \Omega \}$.
     If, in addition, $\beta$  and $\gamma$
     are both $T$-periodic in time  for some $T>0$,
     then $x^-_\lambda$
     and  $x^+_\lambda$ are also $T$-periodic.
     In this case,  we obtain pitchfork bifurcation
     of pathwise random periodic solutions of \eqref{intr1}.
     It seems   that the bifurcation of almost periodic and
     almost automorphic solutions is much more involved.
     Nonetheless, for $\gamma =0$, we will prove
     if $\beta$ is almost periodic (almost automorphic),
     then so are $x^-_\lambda$
     and  $x^+_\lambda$.  As a consequence,
     we obtain  stochastic pitchfork bifurcation
     of pathwise random  almost periodic
     (almost automorphic) solutions
     of \eqref{intr1}   in this case.
     By similar arguments,  we will establish stochastic
     transcritical bifurcation of pathwise 
     random complete quasi-solutions of \eqref{intr2}
     at $\lambda =0$.
     If $\gamma =0$  and $\beta$ is periodic (almost periodic,
     almost automorphic),  we further 
     establish  the transcritical
     bifurcation of random periodic (almost periodic, almost
     automorphic) solutions of  \eqref{intr2}  (see 
     Corollary \ref{2thmst} and Theorem \ref{3thmst}).

   This  paper is organized as follows.
   In the next section,  we introduce the concepts
   of pathwise random almost periodic (random almost automorphic) solutions
   for random dynamical systems
   (for  pathwise random periodic solutions,  the definition can be found
   in   \cite{fen2, zhao1}).
       We will also
   review some results regarding pullback attractors.
   In the last two sections, we prove
   stochastic pitchfork bifurcation
   and transcritical bifurcation
   for 
   equations
   \eqref{intr1}
   and \eqref{intr2},
   respectively.

\section{Preliminaries}
\setcounter{equation}{0}

 In this   section,  
   we  introduce   concepts  of
  pathwise    random 
almost periodic  and  
 almost automorphic solutions 
for random dynamical systems  generated by
 differential equations
driven  simultaneously
by  non-autonomous deterministic
and stochastic forcing.
We also  review  some known
results 
regarding random attractors
for non-autonomous stochastic 
equations.
The reader is further  referred   to  
  \cite{bat1,  bat2,  car3, car4, car5, 
 car6, car7,  chu1, 
   cra1, dua3,   fla1,  gar1,  huang1,
  schm1} for  autonomous    random 
attractors  
 and to \cite{bab1, hal1, sel2, tem1}
for deterministic attractors.

Let $(X, d)$  be   a complete
separable  metric space 
and 
  $(\Omega, \calftwo, P,  \{\thtwot\}_{t \in \R})$
be a   metric dynamical system as in \cite{arn1} .
Given a subset  $A$ of $X$, 
the neighborhood of   $A$ with
radius $r >0$ is denoted by $\caln_r(A)$.
  A mapping $\Phi$: $ \R^+ \times \R \times \Omega \times X
\to X$ is called a continuous  cocycle on $X$
over $ \R$
and
$(\Omega, \calftwo, P,  \{\thtwot\}_{t \in \R})$
if   for all
  $\tau\in \R$,
  $\omega \in   \Omega $
  and    $t, s \in \R^+$,  the following conditions (i)-(iv)  are satisfied:
\begin{itemize}
\item [(i)]   $\Phi (\cdot, \tau, \cdot, \cdot): \R ^+ \times \Omega \times X
\to X$ is
 $(\calb (\R^+)   \times \calftwo \times \calb (X), \
\calb(X))$-measurable;

\item[(ii)]    $\Phi(0, \tau, \omega, \cdot) $ is the identity on $X$;

\item[(iii)]    $\Phi(t+ s, \tau, \omega, \cdot) =
 \Phi(t,  \tau +s,  \theta_{s} \omega, \cdot) 
 \circ \Phi(s, \tau, \omega, \cdot)$;

\item[(iv)]    $\Phi(t, \tau, \omega,  \cdot): X \to  X$ is continuous.
    \end{itemize}
   Such  $\Phi$ is called  
a  continuous periodic  cocycle
with period $T$ if  
$\Phi(t,  \tau +T, \omega, \cdot)
= \Phi(t, \tau,  \omega, \cdot )$
  for every $t  \in \R^+$,
     $\tau \in \R$  and $\omega \in \Omega$.
Let   $\cald$   be 
 a  collection  of  some families of  nonempty subsets of $X$:
\be
\label{defcald}
{\cald} = \{ D =\{ D(\tau, \omega ) \subseteq X: \ 
D(\tau, \omega ) \neq \emptyset,  \ 
  \tau \in \R, \
  \omega \in \Omega\} \}.
\ee
We now define $\cald$-complete  solutions  for $\Phi$.

\begin{defn}
\label{comporbit}
 Let $\cald$ be a collection of  families of
 nonempty  subsets of $X$  given by \eqref{defcald}.

      (i)   A mapping $\psi: \R \times \R \times \Omega$
 $\to X$ is called a complete orbit (solution) of $\Phi$ if for every  $t \in \R^+ $,
 $s, \tau \in \R$ and $\omega \in \Omega$,  the following holds:
$$
 \Phi (t,  \tau +s, \theta_{s} \omega,
  \psi (s, \tau, \omega) )
  = \psi (t +  s, \tau, \omega ).
$$
 If, in  addition,    there exists $D=\{D(\tau, \omega): \tau \in \R,
 \omega \in \Omega \}\in \cald$ such that
 $\psi(t, \tau, \omega)$ belongs to
 $D ( \tau +t, \theta_{ t} \omega )$
 for every  $t \in \R$, $\tau \in \R$
 and $\omega \in \Omega$, then $\psi$ is called a
 $\cald$-complete orbit (solution)  of $\Phi$.

      (ii)   A mapping $\xi:  \R   \times \Omega$
 $\to X$ is called a complete quasi-solution  of $\Phi$ if for every  $t \in \R^+ $,
 $\tau \in \R$ and $\omega \in \Omega$,  the following holds:
$$
 \Phi (t,  \tau,  \omega,
  \xi (\tau, \omega) )
  = \xi (  \tau +t,  \theta_t \omega ).
$$
 If, in  addition,    there exists $D=\{D(\tau, \omega): \tau \in \R,
 \omega \in \Omega \}\in \cald$ such that
 $\xi (\tau, \omega)$ belongs to
 $D ( \tau,  \omega )$
 for all   $\tau \in \R$
 and $\omega \in \Omega$, then $\xi$ is called a
 $\cald$-complete   quasi-solution  of $\Phi$.
 
 \end{defn}
 
 \begin{defn}
 \label{persol}
 Let  $\xi:  \R   \times \Omega$
 $\to X$   be a mapping.
 
  (i)   $\xi$  is called a  random periodic function 
 with period $T$  if  
 $\xi (\tau  +T, \omega) = \xi (\tau, \omega)$
 for  every  $\tau \in \R$  and $\omega \in \Omega$.

      (ii)  $\xi$  is called a  random almost periodic function   
    if   for  every $\omega \in \Omega$  and $\varepsilon>0$, there
 exists $l = l(\omega, \varepsilon)>0$ such that every
 interval of length $l$ contains  a number $t_0$
 such that
 $$
 |\xi(\tau +t_0, \omega) - \xi (\tau, \omega)|
 < \varepsilon, \quad  \text{ for all } \  \tau \in \R.
 $$ 
   
         (iii)   $\xi$
   is called a  random almost automorphic  function    
    if   for
 every $\omega \in \Omega$  and  every  sequence $\{\tau_n\}_{n=1}^\infty$, there
 exist  a subsequence    $\{\tau_{n_m}\}_{m=1}^\infty$
 of  $\{\tau_n\}_{n=1}^\infty$ and a  map $\zeta^\omega: 
 \R \to X$ such that for all $\tau \in \R$,
 $$
 \lim_{m \to \infty}
 \xi (\tau + \tau_{n_m}, \omega)
 = \zeta^\omega (\tau)
 \quad \text{ and } \
  \lim_{m \to \infty}
 \zeta^\omega  (\tau - \tau_{n_m})
 = \xi (\tau, \omega).
 $$
 
 If $\xi$  is  a complete quasi-solution of $\Phi$
 and is also a 
   random periodic  (random almost periodic,
 random almost automorphic) function,
 then $\xi$ is 
     called a 
  random  periodic    (random almost periodic,
 random almost automorphic)  solution of $\Phi$.
   \end{defn}

Notice  that pathwise
random periodic solution  was introduced 
 in \cite{zhao1}.
We here further extend the concepts of
deterministic almost periodic and almost
automorphic solutions to the stochastic case.

 \begin{defn}
 \label{fixedpt}
 Let  $x_0 \in X$   and $E$ be a subset of $X$.
 Then $x_0$ is called a fixed point of $\Phi$  if 
 $\Phi (t, \tau, \omega, x_0) = x_0$ for all
 $t \in \R^+$,  $\tau \in \R$  and $\omega \in \Omega$.
 A fixed point $x_0$ is said to be  pullback Lyapunov stable in $E$
 if  for every $\tau \in \R$, $\omega \in \Omega$
 and $\varepsilon >0$, there exists  $\delta
 =\delta(\tau, \omega, \varepsilon)>0$ such that
 for all $t \in \R^+$,
 $$
 \Phi (t, \tau -t, \theta_{-t} \omega,
 \caln_\delta (x_0) \cap E )
 \subseteq \caln_\varepsilon (x_0) \cap E.
 $$
 If $x_0$  is not    pullback Lyapunov
 stable in $E$,   then $x_0$ is said
 to be pullback Lyapunov unstable in $E$.
 A fixed point $x_0$ is said to be  pullback 
 asymptotically  stable in $E$
 if it is pullback Lyapunov  stable in $E$
 and  for all $\tau \in \R$,
 $\omega \in \Omega$
 and $x \in E$,
 $$
 \lim_{t \to \infty}
\Phi (t, \tau -t, \theta_{-t} \omega,
 x) = x_0.
 $$
 \end{defn}

\begin{defn} 
\label{temset} 
Let
$D=\{D(\tau, \omega): \tau \in \R, \omega \in \Omega \}$
be a family of    nonempty subsets of $X$. 
We say $D$ is tempered in $X$ 
with respect to
$(\Omega, \calftwo, P,  \{\thtwot\}_{t \in \R})$
if  there exists $x_0 \in X$ such that for every  $c>0$,
$\tau \in \R$ and $\omega \in \Omega$,
$$
\lim_{t \to -\infty}
e^{c t} d (D( \tau +t, \thtwot \omega), x_0 )
=0.
$$
\end{defn}

\begin{defn}
\label{defatt}
 Let  
 $\cala = \{\cala (\tau, \omega): \tau \in \R,
  \omega \in \Omega \} \in \cald $  with $\cald$
  given by \eqref{defcald}. 
Then     $\cala$
is called a    $\cald$-pullback    attractor  of 
  $\Phi$
if  for all $t\in \R^+$, 
 $\tau \in \R$ and
 $\omega \in \Omega$, 
\begin{itemize}
\item [(i)]   $\cala$ is measurable  and
 $\cala(\tau, \omega)$ is compact.

\item[(ii)]   $\cala$  is invariant:
$ \Phi(t, \tau, \omega, \cala(\tau, \omega)   )
= \cala ( \tau +t, \theta_{t} \omega
) $.

\item[(iii)]   $\cala$
attracts  every  member    $B \in  \cald$, 
$$ \lim_{t \to  \infty} d (\Phi(t, \tau -t,
 \theta_{-t}\omega, B(\tau -t, 
 \theta_{-t}\omega) ) , \cala (\tau, \omega ))=0.
$$
 \end{itemize}
 If,  further,  there exists $T>0$ such that
 $$
 \cala(\tau +T, \omega) = \cala(\tau,    \omega ),
 \quad \forall \  \tau \in \R, \forall \
  \omega \in \Omega,
 $$
 then   $\cala$ is  called  a periodic attractor  with period $T$.
\end{defn}
 
 We recall the 
    following result  from   \cite{wan5, wan7}. 
Similar results  can be found in 
\cite{bat1,  cra1, fla1, schm1}.

\begin{prop}
\label{att}  
 Let $\cald$ be an    inclusion  closed  
 collection of some  families of   nonempty subsets of
$X$,  and $\Phi$  be a continuous   cocycle on $X$
over $\R$
and
$(\Omega, \calftwo, P,  \{\thtwot\}_{t \in \R})$.
Then
$\Phi$ has a  $\cald$-pullback
attractor $\cala$  in $\cald$
if 
$\Phi$ is $\cald$-pullback asymptotically
compact in $X$ and $\Phi$ has a  closed
   measurable 
     $\cald$-pullback absorbing set
  $K$ in $\cald$.
  The $\cald$-pullback
attractor $\cala$   is unique   and is  characterized   by,
for each $\tau  \in \R$   and
$\omega \in \Omega$,
$$
\cala (\tau, \omega)
 =\{\psi(0, \tau, \omega): \psi  \text{ is a   }  \cald  \text{-}
 \text{complete  solution  of } \Phi \} 
 $$
 $$
 =\{\xi( \tau, \omega): \xi  \text{ is a   }  \cald  \text{-}
 \text{complete   quasi-solution  of } \Phi \} .
$$
  \end{prop}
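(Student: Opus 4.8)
The plan is to realize $\cala$ as the $\cald$-pullback $\omega$-limit set of the absorbing set $K$ and then verify, in order, the four defining properties of Definition \ref{defatt}, the membership $\cala\in\cald$, the two characterizations, and finally uniqueness. Concretely, for each $\tau\in\R$ and $\omega\in\Omega$ I set
\be
\cala(\tau,\omega)=\bigcap_{s\ge 0}\overline{\bigcup_{t\ge s}\Phi\bigl(t,\tau-t,\theta_{-t}\omega,K(\tau-t,\theta_{-t}\omega)\bigr)},
\ee
so that $y\in\cala(\tau,\omega)$ if and only if there are sequences $t_n\to\infty$ and $x_n\in K(\tau-t_n,\theta_{-t_n}\omega)$ with $\Phi(t_n,\tau-t_n,\theta_{-t_n}\omega,x_n)\to y$. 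The $\cald$-pullback asymptotic compactness of $\Phi$ immediately forces every such sequence to be precompact, which yields that $\cala(\tau,\omega)$ is nonempty and compact; closedness is built into the intersection of closures.

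First I would prove the attraction property. Fixing $B\in\cald$, absorption lets $K$ catch $B$ after a pullback time, so it suffices to attract $K$ itself, and this is the standard contradiction argument: a failure produces $t_n\to\infty$ and points whose images stay a fixed distance from $\cala(\tau,\omega)$, yet asymptotic compactness extracts a convergent subsequence whose limit lies in $\cala(\tau,\omega)$ by the characterization above. Next I would establish invariance. The inclusion $\Phi(t,\tau,\omega,\cala(\tau,\omega))\subseteq\cala(\tau+t,\theta_t\omega)$ follows from the cocycle identity (iii) and continuity (iv) in the definition of $\Phi$; the reverse inclusion is the delicate one, since given $z\in\cala(\tau+t,\theta_t\omega)$ I must produce a preimage in $\cala(\tau,\omega)$, which again uses asymptotic compactness to pass to the limit in the pullback sequences together with the continuity of $\Phi(t,\tau,\omega,\cdot)$. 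Membership $\cala\in\cald$ is then clear: applying absorption to $K$ itself and using that $K$ is closed gives $\cala(\tau,\omega)\subseteq K(\tau,\omega)$, so inclusion closedness of $\cald$ places $\cala$ in $\cald$. Measurability of $(\tau,\omega)\mapsto\cala(\tau,\omega)$ I would obtain from the same limit characterization, writing $\cala$ as a countable intersection over $s\in\mathbb{N}$ of closures of measurable images, so that $\omega\mapsto d(x,\cala(\tau,\omega))$ is measurable for each fixed $x$ and the theory of measurable closed-valued multifunctions applies.

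For the characterization by $\cald$-complete solutions, the inclusion $\supseteq$ is the attraction property read backwards: if $\psi$ is a $\cald$-complete solution valued in some $D\in\cald$, then $\psi(0,\tau,\omega)=\Phi(t,\tau-t,\theta_{-t}\omega,\psi(-t,\tau,\omega))$ with $\psi(-t,\tau,\omega)\in D(\tau-t,\theta_{-t}\omega)$, so letting $t\to\infty$ and using closedness of $\cala(\tau,\omega)$ gives $\psi(0,\tau,\omega)\in\cala(\tau,\omega)$. For $\subseteq$, given $y\in\cala(\tau,\omega)$ I build the backward orbit by repeatedly invoking invariance to choose preimages $\psi(-n,\tau,\omega)\in\cala(\tau-n,\theta_{-n}\omega)$ with $\Phi(n,\tau-n,\theta_{-n}\omega,\psi(-n,\tau,\omega))=y$; a diagonal argument over the compact fibers produces a genuine complete solution through $y$, valued in the $\cald$-element $\cala$. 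The quasi-solution characterization then follows by transfer: if $\xi$ is a complete quasi-solution then $\psi(s,\tau,\omega):=\xi(\tau+s,\theta_s\omega)$ is a complete solution with $\psi(0,\tau,\omega)=\xi(\tau,\omega)$, giving $\{\xi(\tau,\omega)\}\subseteq\cala(\tau,\omega)$; conversely, for $y\in\cala(\tau,\omega)$ I put $\xi$ equal to the complete solution just constructed along the orbit $s\mapsto(\tau+s,\theta_s\omega)$, and, since the quasi-solution identity only couples points lying on a common orbit of $(\tau,\omega)\mapsto(\tau+t,\theta_t\omega)$, I may freely select on every other orbit some complete solution valued in the invariant compact set $\cala$, obtaining a $\cald$-complete quasi-solution with $\xi(\tau,\omega)=y$. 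Uniqueness is then standard: any two $\cald$-pullback attractors attract each other while being invariant and compact, so a double application of the attraction property collapses them.

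The step I expect to be the main obstacle is the reverse inclusion in the invariance proof together with the attendant construction of complete solutions through a prescribed attractor point: both require extracting convergent subsequences from the pullback images via $\cald$-pullback asymptotic compactness and then passing to the limit inside $\Phi$ using continuity. The measurability of $(\tau,\omega)\mapsto\cala(\tau,\omega)$ is the one genuinely technical point that must be handled with care through measurable-multifunction theory rather than by soft arguments.
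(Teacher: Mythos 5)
Your proposal is correct and takes essentially the same route as the paper's source: the paper itself gives no proof of Proposition \ref{att} but recalls it from \cite{wan5, wan7}, and the argument there is precisely yours --- realize $\cala$ as the pullback omega-limit set of the closed measurable absorbing set $K$, use $\cald$-pullback asymptotic compactness for nonemptiness, compactness, attraction and invariance, place $\cala$ in $\cald$ via $\cala(\tau,\omega)\subseteq K(\tau,\omega)$ and inclusion-closedness, and characterize the fibers by $\cald$-complete solutions, transferring to quasi-solutions through the correspondence $\psi(s,\tau,\omega)=\xi(\tau+s,\theta_s\omega)$. The steps you single out as delicate (the reverse invariance inclusion with its diagonal construction of backward orbits, and the measurability of the fibers via closed-valued multifunctions) are exactly where the cited proof concentrates its effort, so no gap remains.
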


  \section{Pitchfork bifurcation of  stochastic equations  }
\setcounter{equation}{0}
 
 In this section,  we discuss pitchfork bifurcation
 of  the following one-dimensional
   stochastic equation with deterministic
 non-autonomous forcing:
 \be
 \label{geq1}
 {\frac {dx}{dt}} =\lambda x -\beta (t) x^3
 + \gamma (t,x) + \delta x  \circ {\frac {d\omega}{dt}},
 \quad  x(\tau) = x_\tau ,
 \ee
 where $\tau \in \R$, $t > \tau$, $x \in \R$, 
   $\lambda$ and $\delta$  
 are constants with $\delta>0$. 
 The function  $\beta: \R \to \R$    in \eqref{geq1}
 is    smooth  and      positive.  In addition, we   assume 
  there  exist $\beta_1 \ge \beta_0>0$ such   that   
 \be
 \label{bcon1}
 \beta_0 \le \beta (t) \le \beta_1 \quad
 \text{for   all } \  t \in \R.
 \ee
 The function $\gamma: \R \times \R 
 \to \R$ 
 in \eqref{geq1}
 is smooth and there exist two nonnegative numbers
 $c_1$  and $c_2$   with $c_1 \le c_2 <\beta_0$
 such that
\be\label{gacon1}
 c_1x^4 \le   \gamma (t,x)    x \le c_2 x^4
 \quad \text{  for all } \  t \in \R
 \quad  \text{ and } \  x \in \R.
 \ee
 Note   that condition
 \eqref{gacon1} implies that 
 $\gamma (t,0) = 0$
 for   all   $t \in  \R$.
 Therefore, 
 $x=0$ is a  fixed point of equation
 \eqref{geq1}.  
 The stochastic 
 equation \eqref{geq1} is understood in the
 sense of Stratonovich integration
 with  $\omega$  being   a two-sided real-valued
 Wiener process  on   the 
  probability space
   $(\Omega,\mathcal{F},P)$,  where 
$$ \Omega=\left\{\omega \in
 C(\mathbb{R},\mathbb{R} ):\omega(0)=0\right\} ,
$$ 
   $\mathcal{F}$ 
   is  the Borel $\sigma$-algebra induced by the 
   compact open topology of $\Omega$, 
    and $P$ is  the  Wiener measure on
     $(\Omega, \cal{F})$.
     There is a   classical 
  group $\{\theta_{t}\}_{t\in \R}$   acting
 on $(\Omega,\mathcal{F},P)$
 which  is given  by 
$$
 \theta_{t}\omega(\cdot)
 =\omega( \cdot +t)  - \omega(t), \
 \text{  for   all }   \omega\in\Omega
 \text{  and } t\in \mathbb{R}.
 $$
  It follows   from
  \cite{arn1}
  that  $(\Omega, \calf, P,   \{\theta_{t}\}_{t\in \R}  )$ is a  parametric 
dynamical  system and       
  there exists a $ \theta_{t}$-invariant set 
  $\tilde{\Omega}\subseteq \Omega$
of full $P$ measure  such that
for each $\omega \in \tilde{\Omega}$, 
\be\label{asyom}
{\frac  {\omega (t)}{t}} \to 0 \quad \mbox {as } \ t \to \pm \infty,
\ee
and
\be
\label{alzero}
\int_{-\infty}^0 e^{2 \delta \omega (s)} ds
=\infty
\quad  \text{ and } \quad 
\int^{\infty}_0 e^{2 \delta \omega (s)} ds
=\infty.
\ee
In the sequel,  we  only consider   
$\tilde{\Omega}$ rather  than 
$\Omega$,  and hence we will
 write     $\tilde{\Omega}$ as 
$\Omega$  for   convenience.

Under conditions \eqref{bcon1}
and \eqref{gacon1},  one can prove as
in \cite{arn1}  that
   the stochastic equation
\eqref{geq1}  has
a unique measurable solution
$x$ for  a   given
  initial value.
Moreover, 
for  every  $ \tau  \in \R$,  $\omega \in \Omega$
and $x_\tau \in \R$,  the solution
$x(\cdot, \tau, \omega, x_\tau)
\in C([\tau, \infty), \R)$  and is continuous in 
  $x_\tau$.   
Therefore,  one can  
      define a cocycle
       $\Phi: \R^+ \times \R \times  \Omega \times \R$
$\to  \R$ for     equation 
\eqref{geq1}. 
Given $t \in \R^+$,  $\tau \in \R$, $\omega \in \Omega$ 
 and $x_\tau \in \R$,
let 
 \be \label{phi}
 \Phi (t, \tau,  \omega, x_\tau) = 
  x (t+\tau,  \tau, \theta_{ -\tau} \omega, x_\tau) . 
\ee
By \eqref{geq1}, 
one can check   that   for every
$t \ge 0$, $\tau \ge 0$, $r \in \R$  and 
$\omega \in \Omega$,  
$$
\Phi (t + \tau, r, \omega,  \cdot )
=\Phi  (t , \tau + r,  \theta_{\tau}\omega,  \cdot)
\circ 
 \Phi (\tau, r, \omega,  \cdot) . 
$$
Since the solution of  \eqref{geq1}
is measurable in $\omega \in \Omega$
and continuous in initial data, we find that 
   $\Phi$ given by \eqref{phi} 
is a continuous   cocycle on $\R$ 
over  
$(\Omega, \calf,
P, \{\theta_t \}_{t\in \R})$.
We will study the dynamics of $\Phi$ in  this section.
   
    Given  a  bounded nonempty  subset 
 $I$  of $\R$,   we write 
   $  \| I \| = \sup \{ 
   | x  |:  x \in I \}$. 
Let $D =\{ D(\tau, \omega): \tau \in \R, \omega \in \Omega \}$  
   be   a   family of
  bounded nonempty   subsets of $\R $.  
  Recall that 
  $D$ is tempered if 
  for every  $c>0$, $\tau \in \R$   and $\omega \in \Omega$, 
 \be
 \label{temp1}
 \lim_{t \to  - \infty} e^{  c  t} 
 \| D( \tau  +t, \theta_t  \omega ) \|  =0. 
 \ee 
 Denote by  
  $\cald$        the  collection of all  tempered families of
bounded nonempty  subsets of $\R$, i.e.,
 \be
 \label{tempd}
\cald = \{ 
   D =\{ D(\tau, \omega): \tau \in \R, \omega \in \Omega \}: \ 
 D  \ \mbox{satisfies} \  \eqref{temp1} \} .
\ee

 In the  next subsection,  we consider the
 bifurcation problem of \eqref{geq1} 
 when $\gamma$ is  absent.  In this case,
 the    stochastic
 equation \eqref{geq1}  is exactly solvable
 which makes it  possible for one 
   to  completely determine  its
   dynamics.
  We will show the random complete
  quasi-solutions of \eqref{geq1}  undergo
 a  pitchfork
 bifurcation when $\lambda$ crosses zero from below.
 When $\beta$ is periodic (almost periodic, almost
 automorphic),  we show the random periodic 
 (random almost periodic,   random almost
 automorphic)  solutions  have similar bifurcation
 scenarios.
 We  finally   investigate pitchfork bifurcation
 of \eqref{geq1} with   $\gamma$ satisfying
 \eqref{gacon1}.

  \subsection{Pitchfork bifurcation of a typical 
  non-autonomous stochastic equation  }

This   subsection is devoted   to
   pitchfork
 bifurcation   of  \eqref{geq1} 
 without   $\gamma$.
 In other words,  we  consider 
 the  following non-autonomous stochastic    equation:
 \be
 \label{seq1}
 {\frac {dx}{dt}} =\lambda x -\beta (t) x^3
  + \delta x  \circ {\frac {d\omega}{dt}},
 \quad  x(\tau) = x_\tau ,
 \quad t>\tau .
 \ee
 As in the deterministic case,  equation
 \eqref{seq1} is exactly solvable. 
 To   find a solution  of \eqref{seq1},   one  may  introduce
 a new variable  $y = x^{-2} $
   for $x \neq 0$.
   Then $y$ satisfies 
   \be
   \label{seq2}
   {\frac {dy}{dt}}
   + 2 \lambda y
   = 2 \beta (t) - 2 \delta y \circ {\frac {d\omega}{dt}},
   \quad y(\tau) = y_\tau = x^{-2}_\tau, \quad t>\tau. 
   \ee
   For every  $t, \tau  \in \R$
   with $t \ge \tau$, $\omega \in \Omega$
   and $y_\tau \in \R$, by \eqref{seq2} we get
  $$
   y(t, \tau, \omega, y_\tau)
   = e^{2\lambda (\tau -t)
   + 2 \delta (\omega (\tau) - \omega (t) )} y_\tau
   + 2 \int_\tau^t e^{2 \lambda (r-t) + 2\delta (\omega (r)
   -\omega (t) ) } \beta (r) dr.
  $$
 Therefore,   the solution $x$ of \eqref{seq1}
  is given by,
  for  every  $t, \tau  \in \R$
   with $t \ge \tau$, $\omega \in \Omega$
   and $x_\tau \in \R$,
   \be
   \label{seq4}
   x(t, \tau, \omega, x_\tau)
   = {\frac {x_\tau} {
   \sqrt{
     e^{2\lambda (\tau -t)
   + 2 \delta (\omega (\tau) - \omega (t) )} 
   + 2 x^2_\tau \int_\tau^t e^{2 \lambda (r-t) + 2\delta (\omega (r)
   -\omega (t) ) } \beta (r) dr
   }}}.
   \ee
   It follows   from
   \eqref{seq4} that, 
   for each $t \in \R^+$, $\tau \in \R$,
  $ \omega \in \Omega$ 
  and $x_{0} \in \R$, 
     \be
   \label{seq5}
   x(\tau, \tau -t,  \theta_{-\tau}\omega,  x_{0} )
   = {\frac {x_{0}} {
   \sqrt{
     e^{ - 2\lambda  t 
   + 2 \delta   \omega (-t) } 
   + 2 x^2_{0} \int_{\tau -t}^\tau 
    e^{2 \lambda (r-\tau) + 2\delta  \omega (r -\tau)
     } \beta (r) dr
   }}}.
   \ee
   By \eqref{asyom}  and
   \eqref{seq5} we get,
   for every $\lambda >0$  and  $x_{0}>0$,
   $$
  \lim_{t \to \infty}
   x(\tau, \tau -t,  \theta_{-\tau}\omega,  x_{0} )
   = {\frac {1} {
   \sqrt{ 
     2   \int_{-\infty}^\tau 
    e^{2 \lambda (r-\tau) + 2\delta  \omega (r -\tau)
     } \beta (r) dr
   }}}.
   $$
   That is,   for    every
   $\lambda >0$  and  $x_{0}>0$,
   we have 
       \be
   \label{seq6}
      \lim_{t \to \infty}
   x(\tau, \tau -t,  \theta_{-\tau}\omega,  x_{0} )
   =
    {\frac {1} {
   \sqrt{ 
     2   \int_{-\infty}^0
    e^{2 \lambda r + 2\delta  \omega (r)
     } \beta (r +\tau) dr
     }}}.
   \ee
   Note   that
   the right-hand side of
   \eqref{seq6} is well defined 
   in terms of \eqref{bcon1}   and
   \eqref{asyom}. 
   Similarly, by \eqref{seq5}  we  obtain,
     for    every
   $\lambda >0$  and  $x_{0}<0$,
    \be
   \label{seq7}
      \lim_{t \to \infty}
   x(\tau, \tau -t,  \theta_{-\tau}\omega,  x_{0} )
   = 
    {\frac {-1} {
   \sqrt{ 
     2   \int_{-\infty}^0
    e^{2 \lambda r + 2\delta  \omega (r)
     } \beta (r +\tau) dr
     }}}.
     \ee
     Given  $\lambda >0$, $\tau \in \R$  and
     $\omega \in \Omega$, define
     \be
     \label{xplus}
     x^+_\lambda(\tau, \omega) 
      =
    {\frac {1} {
   \sqrt{ 
     2   \int_{-\infty}^0
    e^{2 \lambda r + 2\delta  \omega (r)
     } \beta (r +\tau) dr
     }}}
     \quad
     \text{ and } \quad
       x^-_\lambda (\tau, \omega) 
      =
    {\frac {-1} {
   \sqrt{ 
     2   \int_{-\infty}^0
    e^{2 \lambda r + 2\delta  \omega (r)
     } \beta (r +\tau) dr
     }}}.
     \ee
     It is evident   that for each
     $\lambda>0$ and 
     $\tau \in \R$, 
     both $x^+_\lambda(\tau, \cdot)$
     and $x^-_\lambda(\tau, \cdot)$ are measurable.
     We next prove   that 
     $x^+_\lambda$  and $x^-_\lambda$
     are  random complete quasi-solutions of
     equation \eqref{geq1}. 
     
     \begin{lem}
     \label{scso1}
     Suppose  \eqref{bcon1}  holds.
     Then for every $\lambda >0$,  $x^+_\lambda$
     and $x^-_\lambda$ given by
     \eqref{xplus} are  tempered random  complete
     quasi-solutions  of equation \eqref{geq1}.
     Moreover,   $x^+_\lambda$ is the only complete
     quasi-solution   in $(0, \infty)$
     with tempered reciprocal,
      and
      $x^-_\lambda$ is the only complete
     quasi-solution  in $(-\infty, 0)$
      with tempered reciprocal.
     \end{lem}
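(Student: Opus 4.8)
The plan is to treat everything through the explicit solution formula \eqref{seq4}, so that each assertion becomes a change of variables combined with the sublinear growth \eqref{asyom} of $\omega$. First I would verify that $x^+_\lambda$ is a complete quasi-solution, i.e. that $\Phi(t,\tau,\omega,x^+_\lambda(\tau,\omega)) = x^+_\lambda(\tau+t,\theta_t\omega)$ for all $t\ge 0$. By \eqref{phi} and \eqref{seq4} the left-hand side is obtained by inserting $x_\tau = x^+_\lambda(\tau,\omega)$ into the solution formula; using $(\theta_{-\tau}\omega)(s)=\omega(s-\tau)-\omega(-\tau)$ together with the substitution $u=r-\tau$ rewrites the integral over $[\tau,t+\tau]$ as $\int_0^t e^{2\lambda(u-t)+2\delta(\omega(u)-\omega(t))}\beta(u+\tau)\,du$. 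On the other hand, writing out $x^+_\lambda(\tau+t,\theta_t\omega)$ from \eqref{xplus}, applying $(\theta_t\omega)(r)=\omega(r+t)-\omega(t)$ and the substitution $v=r+t$, and then splitting the resulting integral at $v=0$ (the part over $(-\infty,0]$ reproducing the factor that cancels against $x^+_\lambda(\tau,\omega)$) yields precisely the same denominator. Thus the two sides coincide; since $x^-_\lambda=-x^+_\lambda$, the computation for $x^-_\lambda$ is identical.

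Next I would establish temperedness, so that the family $\{x^\pm_\lambda(\tau,\omega)\}$ lies in $\cald$ and the quasi-solutions qualify as \emph{tempered}. After cancelling the common factor $e^{2\delta\omega(t)}$ one finds
\[
x^+_\lambda(\tau+t,\theta_t\omega)^2
= \frac{1}{2\int_{-\infty}^0 e^{2\lambda s + 2\delta(\omega(s+t)-\omega(t))}\beta(s+t+\tau)\,ds}.
\]
Bounding $\beta$ below by $\beta_0$ via \eqref{bcon1} and restricting the integral to $s\in[-1,0]$, Jensen's inequality reduces the denominator to a multiple of $e^{2\delta(\bar\omega_t-\omega(t))}$, where $\bar\omega_t=\int_{t-1}^t\omega(u)\,du$. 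By \eqref{asyom}, for any $\varepsilon>0$ one has $|\omega(u)|\le\varepsilon|u|$ for all sufficiently negative $u$, whence $|\bar\omega_t-\omega(t)|\le\varepsilon(2|t|+1)$ as $t\to-\infty$; consequently $x^+_\lambda(\tau+t,\theta_t\omega)^2\le C\,e^{4\delta\varepsilon|t|}$, which grows subexponentially. Since $\varepsilon$ is arbitrary this gives $\lim_{t\to-\infty}e^{ct}x^+_\lambda(\tau+t,\theta_t\omega)=0$ for every $c>0$, i.e. temperedness in the sense of \eqref{temp1}. I expect this to be the main obstacle, since it is the one step where the random increments of $\omega$ must genuinely be controlled; the remainder is algebraic.

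Finally I would prove the uniqueness characterization. Let $\xi$ be any complete quasi-solution with $\xi(\tau,\omega)>0$ and $1/\xi$ tempered. Writing the quasi-solution identity in pullback form, $\xi(\tau,\omega)=\Phi(t,\tau-t,\theta_{-t}\omega,\xi(\tau-t,\theta_{-t}\omega))$, and inserting this initial value into \eqref{seq5}, I pass to reciprocals of squares to obtain
\[
\frac{1}{\xi(\tau,\omega)^2}
= \frac{e^{-2\lambda t + 2\delta\omega(-t)}}{\xi(\tau-t,\theta_{-t}\omega)^2}
+ 2\int_{\tau-t}^{\tau} e^{2\lambda(r-\tau)+2\delta\omega(r-\tau)}\beta(r)\,dr.
\]
Letting $t\to\infty$, the integral converges (after $u=r-\tau$) to $1/x^+_\lambda(\tau,\omega)^2$, which is finite by \eqref{bcon1} and \eqref{asyom}, while the first term vanishes: temperedness of $1/\xi$ makes $1/\xi(\tau-t,\theta_{-t}\omega)^2$ grow subexponentially, $e^{2\delta\omega(-t)}$ is subexponential by \eqref{asyom}, and $e^{-2\lambda t}$ decays exponentially, so the product tends to $0$. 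Hence $1/\xi^2=1/(x^+_\lambda)^2$, and positivity forces $\xi=x^+_\lambda$. The argument for $x^-_\lambda$ on $(-\infty,0)$ is entirely symmetric.
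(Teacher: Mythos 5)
Your proposal is correct and follows essentially the same route as the paper's proof: you verify the quasi-solution identity by the same change-of-variables manipulation of the explicit formula \eqref{seq4}, you obtain temperedness from the sublinear growth \eqref{asyom} of $\omega$ (your truncation of the integral to $s\in[-1,0]$ combined with Jensen is only a cosmetic variant of the paper's lower bound $\int_{-\infty}^0 e^{3\lambda r+ct}\,dr$ on the full integral, obtained by choosing $\varepsilon$ in terms of $\lambda$ and $c$), and you prove uniqueness by the identical pullback limit in \eqref{seq5}, killing the initial-data term via temperedness of $\xi^{-1}$ together with $e^{-2\lambda t}$. The one small omission is that the lemma's ``only complete quasi-solution with tempered reciprocal'' implicitly requires $x^{\pm}_\lambda$ themselves to have tempered reciprocals (otherwise the uniqueness class could be empty); the paper checks this explicitly, and in your setup it follows from the symmetric estimate $\omega(s+t)-\omega(t)\le -\varepsilon s-2\varepsilon t$ for $s\le 0$ and $t$ sufficiently negative, which bounds $2\int_{-\infty}^0 e^{2\lambda s+2\delta(\omega(s+t)-\omega(t))}\beta(s+t+\tau)\,ds$ above by a constant multiple of $e^{-4\delta\varepsilon t}$ once $\varepsilon<\lambda/(2\delta)$, so that $1/x^{\pm}_\lambda(\tau+t,\theta_t\omega)$ grows subexponentially as $t\to-\infty$.
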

     
     \begin{proof}
     We first  prove 
     $x^+_\lambda$
     and $x^-_\lambda$   are random  complete
     quasi-solutions.
     Given $t \in \R^+$,
     $\tau \in \R$ and $\omega \in \Omega$,  we need to show
     \be
     \label{pscso1_1}
     \Phi(t, \tau, \omega, x^\pm_\lambda (\tau, \omega ))
     = x^\pm _\lambda (\tau +t, \theta_t \omega ).
     \ee
     By \eqref{seq4}  we find    that, 
    for each  $t \in \R^+$,
     $\tau \in \R$ and $\omega \in \Omega$,
     $$
     x(t+\tau, \tau, \theta_{-\tau} \omega, x^+_\lambda (\tau, \omega))
     $$
     $$
     = 
      {\frac {x^+_\lambda (\tau, \omega)} {
   \sqrt{
     e^{ -2\lambda  t
   - 2 \delta  \omega (t)  } 
   + 2   (x^+_\lambda (\tau, \omega) )^2
    \int_\tau^{t+\tau} e^{2 \lambda (r-t -\tau) + 2\delta (\omega (r-\tau)
   -\omega (t) ) } \beta (r) dr
   }}}
   $$
    $$
     = 
      {\frac {1} {
   \sqrt{
     e^{ -2\lambda  t
   - 2 \delta  \omega (t)  } (x^+_\lambda (\tau, \omega) )^{-2}
   + 2   
    \int_\tau^{t+\tau} e^{2 \lambda (r-t -\tau) + 2\delta (\omega (r-\tau)
   -\omega (t) ) } \beta (r) dr
   }}}
   $$
   $$
     = 
      {\frac {1} {
   \sqrt{
     2e^{ -2\lambda  t
   - 2 \delta  \omega (t)  } 
      \int_{-\infty}^0
    e^{2 \lambda r + 2\delta  \omega (r)
     } \beta (r +\tau) dr
   + 2   
    \int_\tau^{t+\tau} e^{2 \lambda (r-t -\tau) + 2\delta (\omega (r-\tau)
   -\omega (t) ) } \beta (r) dr
   }}}
   $$
   $$
     = 
      {\frac {1} {
   \sqrt{
     2  \int_{-\infty}^0
    e^{2 \lambda ( r-t)  + 2\delta  ( \omega (r) -\omega (t) )
     } \beta (r +\tau) dr
   + 2   
    \int_0^{t} e^{2 \lambda (r-t ) + 2\delta (\omega (r)
   -\omega (t) ) } \beta (r +\tau) dr
   }}}
   $$
   $$
     = 
      {\frac {1} {
   \sqrt{
     2  \int_{-\infty}^t
    e^{2 \lambda ( r-t)  + 2\delta  ( \omega (r) -\omega (t) )
     } \beta (r +\tau) dr
   }}}
   $$
    \be\label{pscso1_2}
     = 
      {\frac {1} {
   \sqrt{
     2  \int_{-\infty}^0
    e^{2 \lambda r + 2\delta  \theta_t \omega (r) 
     } \beta (r +t+ \tau) dr
   }}}
   = x^+_\lambda  (\tau + t, \theta_t \omega).
  \ee
  It follows  from 
  \eqref{phi}   and \eqref{pscso1_2} that
  for each  $t \in \R^+$,
     $\tau \in \R$ and $\omega \in \Omega$,
   \be
     \label{pscso1_3}
     \Phi(t, \tau, \omega, x^+_\lambda (\tau, \omega ))
     = x^+ _\lambda (\tau +t, \theta_t \omega ).
     \ee
     Similarly,  we  can also  verify  that
     for each  $t \in \R^+$,
     $\tau \in \R$ and $\omega \in \Omega$,
   \be
     \label{pscso1_4}
     \Phi(t, \tau, \omega, x^-_\lambda (\tau, \omega ))
     = x^- _\lambda (\tau +t, \theta_t \omega ).
     \ee
     By \eqref{pscso1_3}-\eqref{pscso1_4} we get
     \eqref{pscso1_1}, and hence
     both $x^+_\lambda$   and $x^-_\lambda$
     are random complete quasi-solutions of 
     equation \eqref{geq1}.
     
     We now prove  that
     $x^+_\lambda$   and $x^-_\lambda$
     are tempered.
     Given $c>0$,  $\tau \in \R$
     and $\omega \in \Omega$,
      by \eqref{bcon1}  and
       \eqref{xplus}  we get
     \be\label{pscso1_8}  
     e^{ct} |x^\pm_\lambda (\tau +t, \theta_t \omega ) |
     = 
       {\frac {e^{ct}} {
   \sqrt{
     2  \int_{-\infty}^0
    e^{2 \lambda r + 2\delta  \theta_t \omega (r) 
     } \beta (r +t+ \tau) dr
   }}}
     \le 
       {\frac {e^{ct}} {
   \sqrt{
     2  \beta_0  \int_{-\infty}^0
    e^{2 \lambda r + 2\delta  ( \omega (r+t)  -\omega (t) )
     }  dr
   }}}.
 \ee
 Let $\varepsilon = {\frac 1{2\delta}} \min \{\lambda,  {\frac 12}c \}$.
 By \eqref{asyom} we find that
 for each $\omega \in \Omega$,
  there exists $T = T(\omega)<0$ such 
  that  for all $t \le T$,
  \be\label{pscso1_9}
   \varepsilon t \le  \omega (t)   \le  - \varepsilon t,
 \ee
 which implies   that for all $r \le 0$  and $t \le T$,
 \be\label{pscso1_10}
 \varepsilon r+ \varepsilon t 
  \le
  \omega (r+t)   \le  - \varepsilon r -  \varepsilon t .
 \ee
 It follows   from \eqref{pscso1_9}-\eqref{pscso1_10}
 that,   for all $r \le 0$  and $t \le T$,
 $$
 2 \lambda r + 2\delta  ( \omega (r+t)  -\omega (t) )
 \ge
 2\lambda r + 2\delta \varepsilon r
 + 4\delta \varepsilon t
  \ge
 3\lambda r +c t.
 $$
 Therefore, we get,  for all
 $t \le T$,
\be\label{pscso1_11}
  \int_{-\infty}^0
    e^{2 \lambda r + 2\delta  ( \omega (r+t)  -\omega (t) )
     }  dr
     \ge
      \int_{-\infty}^0
    e^{3 \lambda r + ct }  dr
    \ge {\frac {e^{ct}}{3\lambda} }.
    \ee
    By \eqref{pscso1_8} and \eqref{pscso1_11} we obtain,
     for every  $\tau \in \R$
     and $\omega \in \Omega$,
     $$
     \limsup_{ t \to -\infty}
      e^{ct} |x^\pm_\lambda (\tau +t, \theta_t \omega ) |
    \le  \limsup_{ t \to -\infty}
    \sqrt{ {\frac {3\lambda}{2\beta_0}}} e^{{\frac 12} ct}  =0.
   $$
   This shows    that 
   $x^+_\lambda$   and $x^-_\lambda$
   are tempered.
   Similarly,  by \eqref{pscso1_9} and
   \eqref{pscso1_10},  one  can verify
      ${\frac 1{x^+_\lambda}}$   and
       ${\frac 1{x^-_\lambda}}$
   are  also tempered.
   
   Next,  we prove  that $x^+_\lambda$
   is the only complete quasi-solution
   in $(0, \infty)$ with tempered reciprocal.
   Suppose $\xi$ is an arbitrary complete quasi-solution
   in $(0, \infty)$ such that
   $\xi^{-1}$ is tempered. By definition we have, 
   for all $t \in \R^+$, $\tau \in \R$
   and $\omega \in \Omega$,
   \be\label{pscso1_30}
   \Phi (t, \tau -t, \theta_{-t} \omega,
   \xi(\tau -t, \theta_{-t} \omega ))
   =\xi (\tau, \omega).
   \ee
   On the other hand,   by \eqref{seq5}
   we get, for all $\tau \in \R$  and $\omega \in \Omega$,
   \be\label{pscso1_31}
  x (\tau, \tau -t, \theta_{-\tau} \omega,
   \xi(\tau -t, \theta_{-t} \omega )  )
   =
    {\frac {1} {
   \sqrt{
     e^{ - 2\lambda  t 
   + 2 \delta   \omega (-t) } \xi^{-2}(\tau -t, \theta_{-t} \omega )
   + 2  \int_{\tau -t}^\tau 
    e^{2 \lambda (r-\tau) + 2\delta  \omega (r -\tau)
     } \beta (r) dr
   }}}.
 \ee
 Since $\xi^{-1}$ is tempered, by \eqref{asyom} and \eqref{pscso1_31},
 we obtain,   for 
     all $\tau \in \R$  and $\omega \in \Omega$,
   $$ \lim_{t \to \infty}
    \Phi (t, \tau -t, \theta_{-t} \omega,
   \xi(\tau -t, \theta_{-t} \omega ) )
    =  \lim_{t \to \infty}
    x (\tau, \tau -t, \theta_{-\tau} \omega,
   \xi(\tau -t, \theta_{-t} \omega )  )
    = x^+_\lambda (\tau, \omega),
   $$
   which together with \eqref{pscso1_30}
   gives $\xi(\tau, \omega)
   =x^+_\lambda (\tau, \omega)$, as desired. 
   The uniqueness of $x^-_\lambda$ in $(-\infty, 0)$
   can be proved by  a similar approach.
   The details  are omitted.
    \end{proof}
      
      We now discuss  the stability of 
         the zero solution
      of \eqref{seq1}. 
      
       \begin{lem}
     \label{zerost}
     Suppose  \eqref{bcon1}  holds.
      Then the  zero solution
     of    equation \eqref{seq1} is  pullback asymptotically
     stable in $\R$ if  $\lambda \le 0$; and
     pullback Lyapunov unstable  in $\R$  if    $\lambda>0$.
     \end{lem}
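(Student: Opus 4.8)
The plan is to read the whole statement off the explicit pullback representation that has already been derived. Recall from \eqref{phi} that $\Phi(t,\tau-t,\theta_{-t}\omega,x)=x(\tau,\tau-t,\theta_{-\tau}\omega,x)$, so by \eqref{seq5} I would write, for $t\ge 0$,
\[
\Phi(t,\tau-t,\theta_{-t}\omega,x)=\frac{x}{\sqrt{A(t)+2x^2B(t)}},\qquad A(t)=e^{-2\lambda t+2\delta\omega(-t)},\quad B(t)=\int_{-t}^{0}e^{2\lambda s+2\delta\omega(s)}\beta(s+\tau)\,ds,
\]
where the substitution $s=r-\tau$ has been used, $A(t)>0$, and $B(t)\ge 0$ with $B(t)>0$ for $t>0$. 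The entire lemma then reduces to controlling this single quotient uniformly in $t\ge 0$, noting that the sets $E=\R$ and $\caln_\delta(0)\cap E=\caln_\delta(0)$ in Definition \ref{fixedpt} cause no restriction.

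For $\lambda\le 0$ I first record the key divergence of $B$. Since $s\le 0$ forces $e^{2\lambda s}\ge 1$, condition \eqref{bcon1} gives $B(t)\ge\beta_0\int_{-t}^{0}e^{2\delta\omega(s)}\,ds$, and by \eqref{alzero} the right-hand side tends to $+\infty$ as $t\to\infty$. Pullback attraction is then immediate: for $x\ne 0$, dropping the nonnegative term $A(t)$ yields $|\Phi(t,\tau-t,\theta_{-t}\omega,x)|^2\le 1/(2B(t))\to 0$, while $x=0$ is a fixed point; hence $\Phi(t,\tau-t,\theta_{-t}\omega,x)\to 0$ for every $x\in\R$.

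The main obstacle is pullback Lyapunov stability when $\lambda=0$, because $A(t)=e^{2\delta\omega(-t)}$ need not be bounded below over all $t\ge 0$ (the sample path can drive $\omega(-t)$ arbitrarily negative), so the naive bound $|\Phi|^2\le x^2/A(t)$ is not uniform in $t$. I would circumvent this with a split in $t$. Given $\varepsilon>0$, use $B(t)\to\infty$ to choose $T_1=T_1(\tau,\omega,\varepsilon)>0$ with $B(t)\ge 1/(2\varepsilon^2)$ for $t\ge T_1$; then $|\Phi|^2\le 1/(2B(t))\le\varepsilon^2$ on $[T_1,\infty)$ independently of $x$. On the compact interval $[0,T_1]$ the continuous positive function $A$ attains a positive minimum $a=a(\tau,\omega,\varepsilon)>0$, so $|\Phi|^2\le x^2/a$ there; choosing $\delta=\varepsilon\sqrt{a}$ makes $|\Phi|^2<\varepsilon^2$ whenever $|x|<\delta$ on $[0,T_1]$. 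Combining the two ranges shows $\Phi(t,\tau-t,\theta_{-t}\omega,\caln_\delta(0))\subseteq\caln_\varepsilon(0)$ for all $t\ge 0$, which is pullback Lyapunov stability; together with the attraction above this gives pullback asymptotic stability in $\R$ for $\lambda\le 0$.

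For $\lambda>0$ the instability follows directly from Lemma \ref{scso1} and \eqref{seq6}. Fix any $\tau\in\R$ and $\omega\in\Omega$ and set $\varepsilon=\tfrac12 x^{+}_{\lambda}(\tau,\omega)>0$. For an arbitrary $\delta>0$ take any $x_0\in(0,\delta)$; by \eqref{seq6} one has $\Phi(t,\tau-t,\theta_{-t}\omega,x_0)\to x^{+}_{\lambda}(\tau,\omega)>\varepsilon$ as $t\to\infty$, so there exists $t\ge 0$ with $|\Phi(t,\tau-t,\theta_{-t}\omega,x_0)|\ge\varepsilon$. Thus no $\delta>0$ can keep the $\delta$-ball inside the $\varepsilon$-ball, and by Definition \ref{fixedpt} the zero solution is pullback Lyapunov unstable in $\R$, completing the proof.
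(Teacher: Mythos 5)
Your proof is correct. It follows the same basic strategy as the paper -- read everything off the explicit pullback formula \eqref{seq5}, split time into a tail where the integral term dominates and a compact initial interval where continuity of $\omega$ gives a positive lower bound, and deduce instability for $\lambda>0$ from the limits \eqref{seq6}--\eqref{seq7} -- but your case decomposition is genuinely cleaner. The paper treats $\lambda<0$ and $\lambda=0$ as separate cases: for $\lambda<0$ it controls the term $A(t)=e^{-2\lambda t+2\delta\omega(-t)}$ using the sublinear growth \eqref{asyom} of the sample path (which also yields the exponential attraction bound $|x|\le e^{\lambda t-\delta\omega(-t)}|x_0|\to 0$), and only for $\lambda=0$ does it invoke the divergence \eqref{alzero} of $\int_{-t}^0 e^{2\delta\omega(s)}\,ds$. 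Your observation that $e^{2\lambda s}\ge 1$ for $s\le 0$ whenever $\lambda\le 0$ lets the single estimate $B(t)\ge\beta_0\int_{-t}^0 e^{2\delta\omega(s)}\,ds\to\infty$ handle both cases at once, so stability and attraction for all $\lambda\le 0$ rest on \eqref{alzero} alone and \eqref{asyom} is never needed in this half of the lemma. What you give up is quantitative: the paper's separate $\lambda<0$ argument exhibits an exponential pullback decay rate, whereas your unified bound $|\Phi|^2\le 1/(2B(t))$ gives convergence with no rate. Your instability argument is the paper's Case (iii) made explicit in terms of Definition \ref{fixedpt}, which is a small improvement in rigor (the paper merely asserts instability from the convergence to $x^\pm_\lambda\neq 0$). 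One cosmetic point: you use $\delta$ both for the noise intensity in \eqref{seq1} and for the stability radius; this collision is inherited from Definition \ref{fixedpt} itself, but in a write-up you should rename the radius (the paper uses $\eta$ in its proof for exactly this reason).
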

     
     \begin{proof}
     { Case (i)}: $\lambda <0$.
     In this case, we need to prove the asymptotic stability
     of $x=0$.
     Given $\tau \in \R$, $\omega \in \Omega$
     and $\varepsilon>0$,  we  must  find 
     a positive number $\eta =
      \eta(\tau, \omega, \varepsilon)$
      such that for every $t \ge 0$  and
      $x_0 \in (-\eta, \eta)$,  
      \be\label{pzerost1}
      |\Phi (t, \tau -t, \theta_{-t} \omega, x_0) | <
      \varepsilon.
      \ee
      By \eqref{asyom} we see   that there exists
      $T=T(\omega)>0$ such that  for all $t \ge T$,
      \be
      \label{pzerost2}
       {\frac \lambda {2\delta}} t 
      \le \omega (-t) \le 
       -{\frac \lambda {2 \delta} } t .
      \ee
      By \eqref{pzerost2} we get
      \be\label{pzerost3}
      e^{-\lambda t +\delta \omega (-t) }
      \ge e^{-{\frac 12} \lambda t}
      \ge 1, \quad \text{   for all } \ t \ge T.
      \ee
      On the other hand, by   the continuity of
      $\omega$, there exists  a positive number
      $c_0= c_0(\omega)$   such that
      \be\label{pzerost4}
       e^{-\lambda t +\delta \omega (-t) } \ge c_0,
       \quad \text{  for all } t \in [0, T].
       \ee
      Let  $\eta =  
       \min \{  \varepsilon,   \   \varepsilon c_0\}$
      with   $c_0$    as in  \eqref{pzerost4}.
      Then for every $ t \ge 0$  and $x_0 \in  
      (-\eta, \eta)$, 
      it follows   from \eqref{pzerost3}-\eqref{pzerost4}
      that
      $$
      \varepsilon 
       e^{-\lambda t +\delta \omega (-t) } 
       \ge \eta > |x_0|,
       $$
       which  implies   that  
       for every $ t \ge 0$  and $x_0 \in  
      (-\eta, \eta)$,
      \be\label{pzerost9}
     e^{ - 2\lambda  t 
   + 2 \delta   \omega (-t) } x^{-2}_{0}
   + 2  \int_{\tau -t}^\tau 
    e^{2 \lambda (r-\tau) + 2\delta  \omega (r -\tau)
     } \beta (r) dr  >\varepsilon^{-2}.
    \ee
    By \eqref{seq5}  and \eqref{pzerost9}
    we get,   for every $ t \ge 0$  and $x_0 \in  
      (-\eta, \eta)$,
     $$
      |x(\tau, \tau-t, \theta_{-\tau} \omega, x_0)|
      < \varepsilon.
      $$
      Therefore, \eqref{pzerost1}  is satisfied   and thus
      $x=0$ is pullback  Lyapunov stable in  $\R$.
      Note   that for every  $x_0 \in \R$, from
      \eqref{seq5}  and \eqref{pzerost2} we have
      $$
     \limsup_{t \to \infty}
      |x(\tau, \tau-t, \theta_{-\tau} \omega, x_0)|
      \le  \limsup_{t \to \infty}
      e^{\lambda t - \delta \omega (-t)} |x_0| =0,
      $$
      which along with
      \eqref{pzerost1}
      and Definition \ref{fixedpt}
      shows   that $x=0$ is pullback asymptotically stable
      in $\R$
      for $\lambda <0$.
      
       { Case (ii)}: $\lambda =0$.  In this case, by 
       \eqref{bcon1} and
       \eqref{alzero}
       we get   for every $\tau \in \R$   and
       $\omega \in \Omega$,
       $$
       \liminf_{t \to \infty}
       \int_{\tau -t}^\tau
       2e^{2 \delta \omega (r-\tau)}
       \beta (r)  dr
       \ge 
       2 \beta_0 \liminf_{t \to \infty}
        \int_{  -t}^0
       e^{2 \delta \omega (r)}
         dr =\infty,
         $$
         which implies   that  for   given
         $\varepsilon>0$,  there exists
         $T=T(\omega)>0$ such that
         for all $t \ge T$, 
         \be\label{pzerost20}
        2 \int_{\tau -t}^\tau
       e^{2 \delta \omega (r-\tau)}
       \beta (r)  dr > \varepsilon^{-2}.
       \ee
       Let $\eta = \varepsilon c_0$  where
       $c_0$ is the positive number
       in \eqref{pzerost4}   with $\lambda =0$.
       Then for every   $t \in [0. T]$ and $x_0 \in (-\eta, \eta)$, we
       have
       \be\label{pzerost21}
       e^{2\delta \omega (-t) } x_0^{-2}  >  \varepsilon^{-2}.
       \ee
       It follows   from 
       \eqref{pzerost20}-\eqref{pzerost21} that
       for every $t \ge 0$  and $x_0 \in (-\eta, \eta)$,
       $$
       e^{2\delta \omega (-t) } x_0^{-2}
       + 2 \int_{\tau -t}^\tau
       e^{2 \delta \omega (r-\tau)}
       \beta (r)  dr > \varepsilon^{-2},
       $$
       which along   with \eqref{seq5} shows   that
       for every $t \ge 0$  and $x_0 \in (-\eta, \eta)$,
       $$
       |x(\tau, \tau -t, \theta_{-\tau} \omega, x_0 )|
       < \varepsilon.
       $$
       Therefore $x=0$  is pullback Lyapunov stable
       in $\R$
       for $\lambda =0$.
      On the  other   hand, 
      by  \eqref{bcon1} and
       \eqref{alzero} we get,  for each
       $\tau \in \R$    and $\omega \in \Omega$,
       $$
       \limsup_{t \to \infty}
        |x(\tau, \tau -t, \theta_{-\tau} \omega, x_0 )|
        \le
         \limsup_{t \to \infty}
         {\frac {1} {\sqrt{2 \beta_0
          \int^0_{-t} e^{2\delta \omega (r)}  dr}} } =0.
          $$
          This indicates    that $x=0$  is pullback
          asymptotically stable in $\R$   for $\lambda   =0$.
          
          { Case (iii)}: $\lambda >0$. Note  that for every
          $\lambda >0$, $\tau \in \R$
          and $\omega \in \Omega$,
           $x^+_\lambda(\tau, \omega)$  and 
           $x^-_\lambda (\tau , \omega)$
           given by \eqref{xplus} are nonzero.
           In addition, by \eqref{seq6}-\eqref{seq7}, 
           we know     that every
           solution
           $x(\tau, \tau -t, \theta_{-\tau}\omega, x_0)$
           with $x_0 \neq 0$ converges either to
           $x^+_\lambda(\tau, \omega)$ or
           $x^-_\lambda (\tau , \omega)$
           as $t \to \infty$. 
           Therefore, we conclude    that
           the zero solution of \eqref{seq1} is not pullback
           stable in $\R$  for $\lambda>0$.
     \end{proof}
     
       We are now 
       ready to  discuss pitchfork
        bifurcation of random  complete quasi-solutions of 
          \eqref{seq1}.
  
  \begin{thm}
  \label{thms}
   Suppose  \eqref{bcon1}  holds.
     Then the  random complete quasi-solutions of \eqref{seq1}
     undergo a  stochastic pitchfork bifurcation at $\lambda =0$.
     More precisely:
     
         (i) If $\lambda \le 0$,   then $x=0$  is the 
     unique   random complete quasi-solution
     of \eqref{seq1} which is pullback asymptotically stable in $\R$.
     In this case,  the equation has a trivial $\cald$-pullback 
     attractor $\cala = \{ \cala (\tau, \omega) = \{0\}: \tau
     \in \R, \omega \in \Omega \}$.
     
     (ii) If $\lambda>0$,  then the zero solution loses its stability
     and the equation has two more tempered
      random complete quasi-solutions
     $x^+_\lambda>0$  and $x^-_\lambda <0$
     such that 
     \be\label{thms_1}
     \lim_{\lambda \to 0}
     x^\pm_\lambda (\tau, \omega) = 0,
     \quad \text{  for all } \ \tau \in \R
     \ \text{  and }  \omega \in \Omega.
     \ee
     Moreover,   $x^+_\lambda$
    and $x^-_\lambda$  are the only
     complete quasi-solutions
    with  tempered  reciprocals
    in $(0, \infty)$   and $(-\infty, 0)$,
    respectively.
     In this case,  equation \eqref{seq1}
     has a $\cald$-pullback attractor
     $\cala =\{ \cala(\tau, \omega) 
     = [x^-_\lambda (\tau, \omega), 
     x^+_\lambda (\tau, \omega) ]: \tau \in \R,
     \omega \in \Omega \}$,   
     $x^+_\lambda$
     and $x^-_\lambda$
      pullback attracts 
     every compact subset of $(0, \infty)$
     and $(-\infty, 0)$, respectively. 
     \end{thm}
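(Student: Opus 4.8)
I would assemble the two preceding lemmas together with the abstract attractor theorem, reducing the statement to two genuinely new points: the limit \eqref{thms_1} and the precise shape of $\cala$. The argument splits according to the sign of $\lambda$, but existence of the attractor is common to both cases. Using \eqref{seq5} and discarding the nonnegative first term under the root gives the uniform bound
$|x(\tau,\tau-t,\theta_{-\tau}\omega,x_0)|\le\big(2\int_{\tau-t}^\tau e^{2\lambda(r-\tau)+2\delta\omega(r-\tau)}\beta(r)\,dr\big)^{-1/2}$.
For $\lambda>0$ the right-hand side decreases to $x^+_\lambda(\tau,\omega)$, so the tempered interval $K(\tau,\omega)=[-x^+_\lambda(\tau,\omega)-1,\,x^+_\lambda(\tau,\omega)+1]$ is a closed, measurable, $\cald$-pullback absorbing set; for $\lambda\le0$ Lemma \ref{zerost} gives decay to $0$, so a fixed ball serves. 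Since bounded sequences in $\R$ are precompact, $\Phi$ is $\cald$-pullback asymptotically compact, and Proposition \ref{att} yields a unique $\cald$-pullback attractor characterized as the set of values of $\cald$-complete quasi-solutions.

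For $\lambda\le0$, Lemma \ref{zerost} gives pullback asymptotic stability of $x=0$, and the estimates there in fact show that every trajectory converges to $0$; consequently a nonzero complete quasi-solution cannot be pullback asymptotically stable, since points near one of its nonzero values would still be driven to $0$, which forces uniqueness. To identify $\cala=\{0\}$ I would show $0$ is the only tempered complete quasi-solution: inserting $\xi(\tau-t,\theta_{-t}\omega)$ into \eqref{seq5} and letting $t\to\infty$, the denominator diverges because of \eqref{alzero} when $\lambda=0$, and because of the factor $e^{-2\lambda t}$ together with \eqref{asyom} when $\lambda<0$, forcing $\xi(\tau,\omega)=0$; the characterization in Proposition \ref{att} then gives $\cala(\tau,\omega)=\{0\}$.

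For $\lambda>0$, instability of $0$ and the existence and uniqueness of the tempered complete quasi-solutions $x^\pm_\lambda$ (in $(0,\infty)$ and $(-\infty,0)$, among those with tempered reciprocal) are exactly Lemmas \ref{zerost} and \ref{scso1}. For \eqref{thms_1} I would apply monotone convergence to \eqref{xplus}: as $\lambda\downarrow0$ the integrand $e^{2\lambda r}e^{2\delta\omega(r)}\beta(r+\tau)$ increases to $e^{2\delta\omega(r)}\beta(r+\tau)$ for $r\le0$, and the latter has infinite integral over $(-\infty,0)$ by \eqref{bcon1} and \eqref{alzero}, so $x^\pm_\lambda(\tau,\omega)\to0$. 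The inclusion $\cala(\tau,\omega)\subseteq[x^-_\lambda(\tau,\omega),x^+_\lambda(\tau,\omega)]$ follows from the absorbing estimate above together with invariance and the symmetry $x^-_\lambda=-x^+_\lambda$.

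The reverse inclusion is where the real work lies and is the \emph{main obstacle}. The difficulty is that the initial datum needed to hit an interior point $p$ by pulling back time $t$ tends to $0$, so $p$ cannot be captured by attracting any fixed compact set and must instead be realized on a complete quasi-solution. I would produce these through the substitution $y=x^{-2}$, which linearizes \eqref{seq1} into \eqref{seq2}; its complete quasi-solutions are $y=y^++h$, with $y^+=(x^+_\lambda)^{-2}$ the particular one and $h\ge0$ ranging over a one-parameter family of nonnegative solutions of the homogeneous equation. Each $\xi=(y^++h)^{-1/2}$ then satisfies $0<\xi\le x^+_\lambda$, hence is tempered, and as the parameter runs over $[0,\infty)$ its value at a fixed $(\tau,\omega)$ sweeps all of $(0,x^+_\lambda(\tau,\omega)]$; the symmetry $x\mapsto-x$ fills the negative side and the fixed point $0$ the middle, so every point of $[x^-_\lambda,x^+_\lambda]$ is the value of a $\cald$-complete quasi-solution and thus lies in $\cala$. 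Finally, writing $x(\tau,\tau-t,\theta_{-\tau}\omega,x_0)^2=\big(A(t)/x_0^2+B(t)\big)^{-1}$ with $A(t)=e^{-2\lambda t+2\delta\omega(-t)}$ and $B(t)=2\int_{\tau-t}^\tau e^{2\lambda(r-\tau)+2\delta\omega(r-\tau)}\beta(r)\,dr$, and noting that $A(t)\to0$ (so $A(t)/x_0^2\to0$ uniformly for $x_0\ge a>0$) while $B(t)\to(x^+_\lambda)^{-2}$, upgrades the pointwise limits \eqref{seq6}–\eqref{seq7} to uniform attraction of compact subsets of $(0,\infty)$ and $(-\infty,0)$.
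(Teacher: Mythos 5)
Your proposal is correct, and it is considerably more detailed than the paper's own proof, which verifies \eqref{thms_1} by Fatou's lemma (your monotone--convergence argument is the same estimate in different clothing) and then declares the remainder ``an immediate consequence of \eqref{seq5}, Lemmas \ref{scso1} and \ref{zerost}'' with details omitted. Your real divergence from the paper is in the structure $\cala(\tau,\omega)=[x^-_\lambda(\tau,\omega),x^+_\lambda(\tau,\omega)]$. The paper only supplies that argument later, in Theorem \ref{nonatt2} for the general equation \eqref{noneq1}: there the endpoints arise as monotone pullback limits of the trajectories through $\pm\xi$, and the reverse inclusion is obtained by a squeeze --- for $z$ in the interval, the entire trajectory $\psi(t,\tau,\omega)=x(t+\tau,\tau,\theta_{-\tau}\omega,z)$ is trapped between two tempered quasi-solutions, hence is a $\cald$-complete \emph{solution}, and $z\in\cala(\tau,\omega)$ by the first characterization in Proposition \ref{att}. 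You instead exploit exact solvability to parameterize all interior quasi-solutions as $\xi=(y^++h)^{-1/2}$ with $h\ge 0$ homogeneous. Both routes work; the paper's squeeze generalizes beyond solvable equations, while your uniform bound obtained by discarding the first term under the root in \eqref{seq5} buys something extra: for $\lambda\le 0$ it kills \emph{every} complete quasi-solution, tempered or not, since for $\lambda<0$ the integral $\int_{-t}^0 e^{2\lambda r+2\delta\omega(r)}\beta(r+\tau)\,dr$ diverges as $t\to\infty$ (the exponent $2\lambda r\to+\infty$ as $r\to-\infty$). Note that your alternative mechanism for $\lambda<0$ via the factor $e^{-2\lambda t}$ does require the temperedness you assumed, since the numerator $\xi(\tau-t,\theta_{-t}\omega)$ could otherwise grow.

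One step in your reverse inclusion needs more care than you give it: the existence of the nonzero homogeneous $h$. A complete quasi-solution is a function of $(\tau,\omega)$ subject to the global consistency condition $h(\tau+t,\theta_t\omega)=e^{-2\lambda t-2\delta\omega(t)}\,h(\tau,\omega)$ for all $(\tau,\omega)$, not a single path; and for $\lambda>0$ no \emph{measurable} nonzero solution can exist, because $\log h(\tau+t,\theta_t\omega)-\log h(\tau,\omega)=-2\lambda t-2\delta\omega(t)$ would exhibit the drift $-2\lambda t$ as a coboundary of the measure-preserving shift, contradicting ${\frac 1t}\left( K(\theta_t\omega)-K(\omega)\right)\to 0$ in probability for any measurable $K$. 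Your plan survives because Definition \ref{comporbit} imposes no measurability: fix the target pair $(\tau_0,\omega_0)$ and $c\ge 0$, set $h(\tau_0+t,\theta_t\omega_0)=c\,e^{-2\lambda t-2\delta\omega_0(t)}$ along the orbit $\{(\tau_0+t,\theta_t\omega_0):t\in\R\}$ and $h=0$ off it (these orbits partition $\R\times\Omega$, so the assignment is consistent), and observe $0<\xi\le x^+_\lambda$ makes the singleton family tempered, so Proposition \ref{att} applies. Alternatively, and more in the spirit of the paper, avoid the orbit extension entirely by using three-argument complete solutions: for fixed $(\tau,\omega)$ and $z\in(0,x^+_\lambda(\tau,\omega))$, the explicit backward solution exists for all negative times because the bracket $1-2z^2\int_t^0 e^{2\lambda s+2\delta\omega(s)}\beta(s+\tau)\,ds$ stays bounded below by $1-z^2\,x^+_\lambda(\tau,\omega)^{-2}>0$, and it tends to $0$ as $t\to-\infty$; this $\psi$ is a $\cald$-complete solution, so $z\in\cala(\tau,\omega)$ with no measurability question at all. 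The remaining ingredients of your proposal --- the absorbing interval built from $x^+_\lambda$, uniqueness via Lemma \ref{scso1}, instability via Lemma \ref{zerost}, and the uniformization $x^2=\left( A(t)/x_0^2+B(t)\right)^{-1}$ for attraction of compacta --- are all sound.
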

     
     \begin{proof}
     We  first  verify \eqref{thms_1}.
     By
     \eqref{bcon1}, \eqref{alzero}
     and  Fatou\rq{}s lemma we find   that,
     for every $\tau \in \R$  and $\omega
     \in \Omega$,
     $$
     \liminf_{\lambda \to 0}
     \int^0_{-\infty}
     e^{2\lambda r + 2\delta \omega (r)} \beta (r+\tau) dr
     \ge
      \liminf_{\lambda \to 0}
     \int^0_{-\infty}
     \beta_0 e^{2\lambda r + 2\delta \omega (r)}  dr
  \ge 
    \beta_0 \int^0_{-\infty}
      e^{ 2\delta \omega (r)}  dr =\infty,
     $$
     which along with \eqref{xplus}  yields  \eqref{thms_1}.
     Note that the rest of this theorem  is  an immediate
     consequence of 
           \eqref{seq5},
       Lemmas \ref{scso1} and \ref{zerost}. The details
       are omitted here.
     \end{proof}
     
     Next,  we consider pitchfork bifurcation
     of random periodic,   random almost periodic
     and  random almost automorphic solutions
     of \eqref{seq1}.
     Let $\beta: \R \to \R$ be a periodic function with period
     $T>0$. Then by \eqref{xplus} we see that
     for each $\lambda>0$
     and  $\omega \in \Omega$,  both  $x^+_\lambda (\cdot, \omega)$
     and $x^-_\lambda (\cdot, \omega)$ are   $T$-periodic.
     In other words,  
       $x^+_\lambda  $
     and $x^-_\lambda $ are  random  periodic solutions
     of \eqref{seq1}  in this case.   Applying Theorem \ref{thms},
     we  immediately  get  pitchfork bifurcation of 
     random periodic solutions  for \eqref{seq1}.
     In the almost periodic case,  we need   the   following lemma.
     
     \begin{lem}
     \label{apb1}
     Suppose \eqref{bcon1} holds and $\beta: \R
     \to \R$ is almost periodic.
     Then for every $\lambda>0$, 
     the complete quasi-solutions $x^\pm_\lambda$ given
     by \eqref{xplus}  are also almost periodic. 
     \end{lem}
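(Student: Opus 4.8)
The plan is to show that for each fixed $\omega \in \Omega$ the function $\tau \mapsto x^+_\lambda(\tau,\omega)$ is almost periodic in the Bohr sense, which is precisely the requirement in Definition \ref{persol}(ii); since $x^-_\lambda = -x^+_\lambda$, the same conclusion for $x^-_\lambda$ then follows at once. Fix $\omega$ and write $w(r) = e^{2\lambda r + 2\delta\omega(r)}$ and
$$
F(\tau) = \int_{-\infty}^0 w(r)\,\beta(r+\tau)\,dr,
$$
so that $x^+_\lambda(\tau,\omega) = (2F(\tau))^{-1/2}$ by \eqref{xplus}. Because $\lambda > 0$ and $\omega(r)/r \to 0$ as $r \to -\infty$ by \eqref{asyom}, the kernel $w$ is integrable on $(-\infty,0]$; set $M = M(\omega) = \int_{-\infty}^0 w(r)\,dr$, which satisfies $0 < M < \infty$ since $w > 0$.

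The core of the argument is to prove that $F$ is almost periodic, i.e., that the weighted integral transfers almost periodicity from $\beta$. An almost periodic $\beta$ is bounded and, for every $\varepsilon > 0$, its $\varepsilon$-almost periods form a relatively dense set. If $t_0$ is an $\varepsilon$-almost period of $\beta$, then translating under the integral sign and using $w \ge 0$ give
$$
|F(\tau+t_0) - F(\tau)| \le \int_{-\infty}^0 w(r)\,|\beta(r+\tau+t_0) - \beta(r+\tau)|\,dr \le M\varepsilon
$$
uniformly in $\tau \in \R$. Hence every $\varepsilon$-almost period of $\beta$ is an $M\varepsilon$-almost period of $F$, and the relative density of these periods shows $F$ is almost periodic.

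To finish I would pass the operation $u \mapsto (2u)^{-1/2}$ through $F$. By \eqref{bcon1} one has $\beta_0 \le \beta(r+\tau) \le \beta_1$, whence $\beta_0 M \le F(\tau) \le \beta_1 M$ for all $\tau$, so $F$ takes values in the compact interval $[\beta_0 M, \beta_1 M] \subset (0,\infty)$. On this interval the map $g(u) = (2u)^{-1/2}$ is Lipschitz, hence uniformly continuous, and the composition of a uniformly continuous map with an almost periodic function is again almost periodic: an $\eta$-almost period of $F$ becomes an almost period of $g\circ F$ with tolerance controlled by the modulus of continuity of $g$. Therefore $x^+_\lambda(\cdot,\omega) = g(F(\cdot))$ is almost periodic, and so is $x^-_\lambda(\cdot,\omega)$.

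The main obstacle is the transfer step for $F$ --- that convolving an almost periodic function against the fixed $L^1$ kernel $w$ preserves almost periodicity. Although standard, it is where the integrability $M < \infty$ (from $\lambda > 0$ and \eqref{asyom}) is indispensable. The composition step is then routine, but it relies crucially on the uniform positive lower bound $F \ge \beta_0 M > 0$ coming from the positivity assumption \eqref{bcon1}; without it $F$ could approach zero and $g$ would lose uniform continuity, so the strict positivity of $\beta$ is exactly what keeps the singularity of $u^{-1/2}$ away.
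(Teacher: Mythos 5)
Your proof is correct and follows essentially the same route as the paper: you show the integral $F(\tau)=\int_{-\infty}^0 e^{2\lambda r+2\delta\omega(r)}\beta(r+\tau)\,dr$ inherits almost periodicity from $\beta$ by passing almost periods through the integral (the paper's \eqref{papb_2}--\eqref{papb_3}, with the tolerance normalized by the kernel mass exactly as you do), and then use the two-sided bounds $\beta_0 M\le F\le\beta_1 M$ from \eqref{bcon1} (the paper's \eqref{papb_4}) to compose with $u\mapsto(2u)^{-1/2}$, which is uniformly continuous on that compact interval. Your explicit remarks on the integrability of the kernel and the Lipschitz continuity of $g$ away from zero merely spell out steps the paper leaves implicit.
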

     
      \begin{proof}
      Given $\tau \in \R$  and $\omega \in \Omega$, denote by
      \be\label{papb_1}
      g(\tau, \omega)
      = \int_{-\infty}^0 e^{2\lambda r + 2\delta \omega (r)} \beta (r+\tau)dr.
      \ee
      We  first show  that $g$ given by \eqref{papb_1}
      is a random  almost periodic function.
      Since $\beta$  is almost periodic, given $\varepsilon>0$,  there
      exists $l = l(\omega, \varepsilon) >0$ such that
      every interval of length $l$ contains a $t_0$ such   that
      \be\label{papb_2}
      |\beta(t+ t_0) - \beta (t) |
      < {\frac {\varepsilon} {\int_{-\infty}^0
       e^{2\lambda r + 2\delta \omega (r)} dr} },
       \quad \text{  for  all } \  t \in \R.
       \ee
       By \eqref{papb_1}-\eqref{papb_2} we obtain,
       for all $\tau \in \R$,
       $$
       | g(\tau + t_0, \omega)
       - g(\tau, \omega)|
       \le  \int_{-\infty}^0 e^{2\lambda r + 2\delta \omega (r)} 
       \left | \beta (r+\tau + t_0)
       - \beta (r+ \tau)  \right | dr  
       $$
        \be\label{papb_3}
       <
        {\frac {\varepsilon} {\int_{-\infty}^0
       e^{2\lambda r + 2\delta \omega (r)} dr} }
        \int_{-\infty}^0 e^{2\lambda r + 2\delta \omega (r)} dr 
        \le \varepsilon,
       \ee
        which shows   that
         $g(\cdot, \omega)$ is 
        almost periodic
         for every fixed   $\omega \in \Omega$.  
         By \eqref{bcon1}  we have, for every
         $\tau \in  \R$ and  $\omega \in \Omega$,
         \be\label{papb_4}
         0< \beta_0 
         \int_{-\infty}^0 e^{2\lambda r + 2\delta \omega (r)} dr
         \le g(\tau, \omega)
         \le \beta_1
           \int_{-\infty}^0 e^{2\lambda r + 2\delta \omega (r)} dr.
           \ee
           It follows   from
           \eqref{papb_3}-\eqref{papb_4} that
           for each fixed $\omega \in \Omega$,
           $ g(\cdot, \omega)^{-\frac 12}$ is almost
           periodic.
           Then the almost
           periodicity of $x^\pm(\cdot, \omega)$
           follows     from   \eqref{xplus} immediately,
           and this completes   the proof.
      \end{proof}
      
      Analogously,  for the almost automorphic case,
      we have the following results.
      
      \begin{lem}
     \label{aab1}
     Suppose \eqref{bcon1} holds and $\beta: \R
     \to \R$ is almost  automorphic. 
     Then for every $\lambda>0$, 
     the complete quasi-solutions $x^\pm_\lambda$ given
     by \eqref{xplus}  are also almost  automorphic. 
     \end{lem}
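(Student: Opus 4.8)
The plan is to follow the template of the proof of Lemma~\ref{apb1}, replacing the $\varepsilon$--$l$ translation argument by the sequential characterization of almost automorphy in Definition~\ref{persol}(iii). As there, the whole problem reduces to the scalar integral functional
\be
g(\tau,\omega)=\int_{-\infty}^0 e^{2\lambda r+2\delta\omega(r)}\beta(r+\tau)\,dr,
\ee
since $x^\pm_\lambda(\tau,\omega)=\pm(2g(\tau,\omega))^{-1/2}$ by \eqref{xplus}. Thus I would first prove that, for each fixed $\omega\in\Omega$, the map $\tau\mapsto g(\tau,\omega)$ is almost automorphic, and then transfer this property to $x^\pm_\lambda$ through the continuous map $s\mapsto\pm(2s)^{-1/2}$.

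For the first step, fix $\omega\in\Omega$ and an arbitrary sequence $\{\tau_n\}$. Since $\beta$ is almost automorphic, I extract a subsequence $\{\tau_{n_m}\}$ and a function $\tilde\beta:\R\to\R$ with $\beta(t+\tau_{n_m})\to\tilde\beta(t)$ and $\tilde\beta(t-\tau_{n_m})\to\beta(t)$ for every $t\in\R$. The natural candidate for the limit function is
\be
\zeta^\omega(\tau)=\int_{-\infty}^0 e^{2\lambda r+2\delta\omega(r)}\tilde\beta(r+\tau)\,dr.
\ee
By \eqref{bcon1} the integrand of $g$ is dominated, uniformly in the shift, by $\beta_1 e^{2\lambda r+2\delta\omega(r)}$, which is integrable on $(-\infty,0)$ in view of \eqref{asyom} (this is exactly the integrability already used to make \eqref{seq6} well defined). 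Hence the dominated convergence theorem, applied with the pointwise limit $\beta(r+\tau+\tau_{n_m})\to\tilde\beta(r+\tau)$, yields $g(\tau+\tau_{n_m},\omega)\to\zeta^\omega(\tau)$ for every $\tau$; applying it again with $\tilde\beta(r+\tau-\tau_{n_m})\to\beta(r+\tau)$ and the same dominating function (note $\beta_0\le\tilde\beta\le\beta_1$, being a pointwise limit of translates of $\beta$) yields $\zeta^\omega(\tau-\tau_{n_m})\to g(\tau,\omega)$. This produces the two required limits with a single subsequence, so $g(\cdot,\omega)$ is almost automorphic.

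Finally, to pass from $g$ to $x^\pm_\lambda$, I observe that the bound $\beta_0\le\tilde\beta\le\beta_1$ forces both $g(\tau,\omega)$ and $\zeta^\omega(\tau)$ to lie in the interval $[\beta_0 C,\beta_1 C]$ with $C=\int_{-\infty}^0 e^{2\lambda r+2\delta\omega(r)}\,dr>0$, so they are bounded away from zero. The function $\phi(s)=\pm(2s)^{-1/2}$ is continuous on $[\beta_0 C,\beta_1 C]$, and composing the pointwise convergences above with $\phi$ gives $x^\pm_\lambda(\tau+\tau_{n_m},\omega)\to\phi(\zeta^\omega(\tau))$ and $\phi(\zeta^\omega(\tau-\tau_{n_m}))\to x^\pm_\lambda(\tau,\omega)$, which is exactly the almost automorphy of $x^\pm_\lambda(\cdot,\omega)$ with limit map $\phi\circ\zeta^\omega$.

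I expect the only delicate point to be the two applications of dominated convergence: one must verify that the single dominating function $\beta_1 e^{2\lambda r+2\delta\omega(r)}$ is integrable (guaranteed by \eqref{asyom}) and that the \emph{same} extracted subsequence simultaneously realizes both half-limits. The latter is automatic here, since the subsequence and the auxiliary function $\tilde\beta$ are produced once and for all from the almost automorphy of $\beta$. In contrast to Lemma~\ref{apb1}, no uniform-in-$\tau$ modulus of continuity for $s\mapsto(2s)^{-1/2}$ is required, because almost automorphic convergence is only pointwise and therefore survives composition with any continuous map on the compact range of $g$.
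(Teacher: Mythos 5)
Your proposal is correct and follows essentially the same route as the paper's proof: the paper likewise extracts a single subsequence and limit function $h$ (your $\tilde\beta$) from the almost automorphy of $\beta$, notes $\beta_0\le h\le\beta_1$, and applies dominated convergence twice with the integrable dominating function $\beta_1 e^{2\lambda r+2\delta\omega(r)}$ to obtain both half-limits for $x^\pm_\lambda$, defining directly $H(\tau,\omega)=\bigl(2\int_{-\infty}^0 e^{2\lambda r+2\delta\omega(r)}h(r+\tau)\,dr\bigr)^{-1/2}$, which is your $\phi\circ\zeta^\omega$. Your explicit factorization through $g$ and the continuous map $\phi$ is only a presentational refinement of the same argument.
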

     
     \begin{proof}
     Let $\{\tau_n\}_{n=1}^\infty$ be a sequence of 
     numbers.  Since $\beta$ is almost automorphic, there
     exists a subsequence $\{\tau_{n_m}\}_{m=1}^\infty$
     of $\{\tau_n\}_{n=1}^\infty$     and a function
     $h: \R \to \R$  such that
     for all $t \in \R$,
     \be\label{paab1_1}
     \lim_{m \to \infty}
     \beta (t +\tau_{n_m} ) = h(t)
     \quad \text{  and } \ 
     \lim_{m \to \infty} h(t-  \tau_{n_m} ) = \beta (t).
     \ee
     By \eqref{bcon1}  and \eqref{paab1_1} we have
     \be\label{paab1_2}
     0< \beta_0 \le h(t) \le \beta_1
     \quad \text{  for all } \ t \in \R.
     \ee
     Given $\tau \in \R$  and $\omega \in \Omega$, denote by
     \be\label{paab1_3}
     H(\tau, \omega)
     =  
    {\frac {1} {
   \sqrt{ 
     2   \int_{-\infty}^0
    e^{2 \lambda r + 2\delta  \omega (r)
     } h (r +\tau) dr
     }}}.
     \ee
     Note that the right-hand side of \eqref{paab1_3}
     is well defined due to \eqref{paab1_2}.
     By \eqref{paab1_1}, \eqref{paab1_3}
      and the Lebesgue dominated
     convergence theorem, we get,  for every
     $\tau \in \R$   and $\omega \in \Omega$,
     \be\label{paab1_6}
     \lim_{m \to \infty} x^+_\lambda (\tau + \tau_{n_m}, \omega)
     =\lim_{m \to \infty}
      {\frac {1} {
   \sqrt{ 
     2   \int_{-\infty}^0
    e^{2 \lambda r + 2\delta  \omega (r)
     } \beta  (r +\tau +\tau_{n_m}  ) dr
     }}}
     = 
   H(\tau, \omega),
     \ee
     and
     \be\label{paab1_7}
       \lim_{m \to \infty}  H (\tau - \tau_{n_m}, \omega)
     =\lim_{m \to \infty}
      {\frac {1} {
   \sqrt{ 
     2   \int_{-\infty}^0
    e^{2 \lambda r + 2\delta  \omega (r)
     }  h (r +\tau - \tau_{n_m}  ) dr
     }}}
     = 
  x^+_\lambda (\tau, \omega).
  \ee
  By \eqref{paab1_6}  and \eqref{paab1_7} we find that
  $x^+_\lambda$ is   a random complete quasi-solution
  of \eqref{seq1}.
  By a similar argument, one can verify
  $x^-_\lambda$ is also a random complete solution.
  This completes    the proof.
     \end{proof}

   As a  consequence of Theorem \ref{thms}, 
    Lemmas \ref{apb1}  and \ref{aab1},  we get the
    following pitchfork bifurcation
    of random periodic (almost periodic,  almost automorphic)
    solutions of \eqref{seq1}.
    
   \begin{thm}
  \label{thmsa3}
   Suppose  \eqref{bcon1}  holds and
   $\beta: \R \to \R$ is periodic (almost periodic, almost automorphic).
     Then the  random  periodic 
     (almost periodic, almost automorphic) solutions
     of \eqref{seq1}
     undergo a  stochastic pitchfork bifurcation at $\lambda =0$.
     More precisely:
     
         (i) If $\lambda \le 0$,   then $x=0$  is the 
     unique   random periodic 
     (almost periodic, almost automorphic) solution 
     of \eqref{seq1} which is pullback asymptotically stable
     in $\R$.
     In this case,  the equation has a trivial $\cald$-pullback 
     attractor $\cala = \{ \cala (\tau, \omega) = \{0\}: \tau
     \in \R, \omega \in \Omega \}$.
     
     (ii) If $\lambda>0$,  then the zero solution loses its stability
     and the equation has two more random 
     periodic 
     (almost periodic, almost automorphic) solutions
     $x^+_\lambda>0$  and $x^-_\lambda <0$
     such that 
     \be\label{thmsa3_1}
     \lim_{\lambda \to 0}
     x^\pm_\lambda (\tau, \omega) = 0,
     \quad \text{  for all } \ \tau \in \R
     \ \text{  and }  \omega \in \Omega.
     \ee 
     In this case,  equation \eqref{seq1}
     has a $\cald$-pullback attractor
     $\cala =\{ \cala(\tau, \omega) 
     = [x^-_\lambda (\tau, \omega), 
     x^+_\lambda (\tau, \omega) ]: \tau \in \R,
     \omega \in \Omega \}$.
      Moreover,   
     $x^+_\lambda$
     and $x^-_\lambda$
      pullback attracts 
     every compact subset of $(0, \infty)$
     and $(-\infty, 0)$, respectively. 
     \end{thm}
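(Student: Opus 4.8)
The plan is to treat this theorem as an assembly of results already in hand rather than a fresh argument: Theorem \ref{thms} supplies the entire bifurcation picture at the level of random complete quasi-solutions, while Lemmas \ref{apb1} and \ref{aab1}, together with a one-line observation in the periodic case, upgrade the bifurcating branches $x^\pm_\lambda$ from mere complete quasi-solutions to genuine random periodic, almost periodic, or almost automorphic solutions in the sense of Definition \ref{persol}. The key conceptual point is that a random periodic (almost periodic, almost automorphic) solution is, by definition, nothing more than a complete quasi-solution that additionally satisfies the corresponding regularity condition; hence every statement about complete quasi-solutions in Theorem \ref{thms} specializes immediately to this smaller class, and uniqueness in the larger class forces uniqueness in the smaller one.

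First I would dispatch the periodic case. If $\beta$ is $T$-periodic, then substituting $\beta(r+\tau+T)=\beta(r+\tau)$ in the integral in \eqref{xplus} gives $x^\pm_\lambda(\tau+T,\omega)=x^\pm_\lambda(\tau,\omega)$ for all $\tau\in\R$ and $\omega\in\Omega$, so $x^\pm_\lambda$ are random periodic functions by Definition \ref{persol}(i). For almost periodic $\beta$ I invoke Lemma \ref{apb1}, and for almost automorphic $\beta$ I invoke Lemma \ref{aab1}; these show that $x^\pm_\lambda$ satisfy parts (ii) and (iii) of Definition \ref{persol}, respectively. Since $x^\pm_\lambda$ are already complete quasi-solutions of \eqref{seq1} by Lemma \ref{scso1}, the final clause of Definition \ref{persol} identifies them as random periodic (almost periodic, almost automorphic) solutions.

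With this in hand, part (i) follows for $\lambda\le 0$: the constant map $x=0$ is trivially periodic, almost periodic, and almost automorphic, and any random periodic (almost periodic, almost automorphic) solution is a fortiori a random complete quasi-solution, so Theorem \ref{thms}(i) forces $x=0$ to be the unique such solution that is pullback asymptotically stable in $\R$, while the attractor is the trivial family from Theorem \ref{thms}(i). For part (ii), with $\lambda>0$, the instability of the zero solution and the limit \eqref{thmsa3_1} are precisely the assertions of Theorem \ref{thms}(ii) (the latter being \eqref{thms_1} verbatim); the branches $x^+_\lambda>0$ and $x^-_\lambda<0$ were shown above to be random periodic (almost periodic, almost automorphic) solutions, and the description of the pullback attractor $\cala$ together with the pullback attraction of compact subsets of $(0,\infty)$ and $(-\infty,0)$ transfers unchanged from Theorem \ref{thms}(ii).

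The main obstacle, such as it is, is entirely a matter of bookkeeping rather than analysis: one must check that the regularity conditions of Definition \ref{persol} are compatible with, and preserved by, the structure already extracted in Theorem \ref{thms}, and in particular that uniqueness of the complete quasi-solutions $x^\pm_\lambda$ (with tempered reciprocals) descends to uniqueness within the periodic, almost periodic, and almost automorphic classes. All genuine analytic content, namely temperedness, the explicit solution formula, stability of the zero branch, and the transfer of almost periodicity and almost automorphy to $x^\pm_\lambda$, has already been established in Lemmas \ref{scso1}, \ref{zerost}, \ref{apb1}, and \ref{aab1}, so no new estimates are required.
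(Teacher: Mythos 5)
Your proposal is correct and matches the paper's proof essentially verbatim: the paper likewise observes that $T$-periodicity of $x^\pm_\lambda$ follows directly from \eqref{xplus}, invokes Lemmas \ref{apb1} and \ref{aab1} for the almost periodic and almost automorphic cases, and then concludes by applying Theorem \ref{thms}. Your additional remarks on why uniqueness and stability descend from the class of complete quasi-solutions to the periodic (almost periodic, almost automorphic) subclass are sound and merely make explicit the bookkeeping the paper leaves implicit.
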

     
     \begin{proof}
  Since $\beta$ is 
      periodic 
     (almost periodic, almost automorphic),  by 
      Lemmas \ref{apb1}  and \ref{aab1} we know
      that for every $\lambda >0$, 
      the random complete quasi-solutions
      $x^+_\lambda$ and $ x^-_\lambda$
      given by \eqref{xplus} 
      are    periodic 
     (almost periodic, almost automorphic).
     Then,   by Theorem \ref{thms} we
     conclude   the proof.
     \end{proof}

  \subsection{Pitchfork bifurcation of a general 
  non-autonomous stochastic equation  }
  
  In this subsection,  we discuss pitchfork bifurcation
  of the stochastic equation \eqref{geq1}
  with a nonlinearity  $\gamma$ satisfying
  \eqref{gacon1}. 
  We first establish   existence of $\cald$-pullback attractors
  for a generalized  system
  and then construct random complete quasi-solutions.
  The comparison principle will play an important role
  in our arguments.
  
  Given $\tau \in \R$,  consider the   non-autonomous stochastic
  equation defined     for $t>\tau$:
  \be\label{noneq1}
  {\frac {dx}{dt}}
  = f(t,x)   + g(t)  + \delta x \circ {\frac {d\omega}{dt}},
  \quad x(\tau) = x_\tau,
  \ee
  where $\delta >0$,
  $g: \R \to \R$ is a function, $f: \R \times \R
  \to \R$ is a smooth nonlinearity satisfying
  \be\label{f1}
  f(t,0) = 0, \quad f(t,x) x \le -\nu x^2
  + h(t) |x|, \  \text{ for all } \ t ,  \ x \in \R,
  \ee
  for some fixed   $\nu >0$  and 
  $h: \R \to \R$.
  By \eqref{f1} we  see   
  $h(t) \ge 0$    for all    $t \in \R$.
  In the sequel,   we assume  that   $g, h \in
  L^1_{loc} (R)$ and there   exists $\alpha
  \in (0, \nu)$ such that
  \be\label{g1}
  \int_{-\infty}^\tau
  e^{\alpha t}
  ( |g(t)|    + |h(t)| ) dt < \infty,
  \quad   \text{  for all } \ \tau \in \R.
  \ee
  This condition
  will be used   to
  construct pullback absorbing sets
  for \eqref{noneq1}.
  To ensure existence of   
    tempered pullback attractors,  we further require the  
    following  condition for $g$   and $h$:
    for every $c>0$   and $\tau \in \R$,
    \be\label{g2}
    \lim_{s \to -\infty}
    e^{(c-\alpha)s} \int_{-\infty}^{s+\tau}
    e^{\alpha t} 
    ( |g(t) | +  |h(t)| ) dt =0.
    \ee
    Note that
    condition \eqref{g2} is stronger   than
    \eqref{g1},  and both conditions
   do   not require
  $g$ and $h$   to be bounded    as
  $t \to \pm\infty$. 
    Based on \eqref{f1}, we may associate
    a linear   system with \eqref{noneq1}.
    Given $\tau \in \R$   and $y_\tau \in \R$,  consider
     \be\label{leq1}
  {\frac {dy}{dt}}
  = -\nu y   +  |g(t)| + h(t)   + \delta y \circ {\frac {d\omega}{dt}},
  \quad y(\tau) = y_\tau.
  \ee
  By  the comparison principle,
  for every  $\tau \in \R$ and $\omega \in \Omega$,
  if $y_\tau \ge 0$,  then  $y(t, \tau, \omega, y_\tau) \ge 0$
  for all $t \ge \tau$.   This along    with
  \eqref{f1} implies  that the solution 
  $y(t, \tau, \omega, y_\tau)$ of the linear
  equation \eqref{leq1}  is a super-solution of
  \eqref{noneq1} provided    $y_\tau $ is nonnegative.
  Therefore, we  are able    to 
  control solutions of   
  \eqref{noneq1} by \eqref{leq1}
  based on  the comparison principle.
  Given $\tau \in \R$   and $\omega \in \Omega$,   the solution
  of the linear   equation \eqref{leq1}   is given by
  \be\label{sleq1}
  y(t, \tau, \omega, y_\tau)
  =  e^{\nu (\tau -t) -\delta (\omega (\tau) -\omega (t))}
  y_\tau
  + \int_\tau^t e^{\nu (s-t) -\delta (\omega (s)-
  \omega (t) ) } 
  ( |g(s)|   + h(s) ) ds.
  \ee
 Let $\Psi: \R^+ \times \R \times \Omega \times \R$
 $\to \R$ be a mapping given by,
 for every   $t \in \R^+$,
  $\tau \in \R$, 
  $\omega \in \Omega$ and $y_\tau \in \R$,
  \be\label{psil}
  \Psi(t, \tau, \omega,  y_\tau)
  = y(t+\tau, \tau, \theta_{-\tau} \omega, y_\tau).
  \ee
   By  \eqref{sleq1} and 
  \eqref{psil}, one can check   that
  $\Psi$  is   a continuous cocycle on $\R$ over
  $(\Omega, \calf, P,  \{\theta_t \}_{t \in \R})$.
 Next,  we show  $\Psi$   has a unique 
  tempered complete quasi-solution   which pullback attracts
  every tempered sets.

  \begin{lem}
  \label{comsl}
  Suppose  \eqref{g1} and \eqref{g2}  hold.  Then 
  $\Psi$ associated with  
   \eqref{leq1}  has a unique tempered complete
  quasi-solution  $\xi$  given by, 
  for every  $\tau \in \R$   and 
  $\omega \in \Omega$,  
  \be
  \label{comxi}
  \xi(\tau, \omega)
  =  \int_{ -\infty}^0 e^{ \nu s  -\delta \omega (s) }
  ( |g(s +\tau)|   + h(s +\tau) ) ds.
  \ee
  Moreover,  $\Psi$  has a 
  $\cald$-pullback attractor given  by 
  $\cala =\{ \cala (\tau, \omega)
  =\{\xi(\tau, \omega)\}:
  \tau \in \R, \omega \in \Omega \}$.
  If, in addition, $g$   and $h$ are periodic functions
  with period $T>0$, then $\xi$ is also $T$-periodic.
  \end{lem}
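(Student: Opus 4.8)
The plan is to verify the assertions one at a time, always treating $\xi$ from \eqref{comxi} as an explicit candidate and computing with the closed form \eqref{sleq1}; throughout write $F(t)=|g(t)|+h(t)$ (note $h\ge 0$ by \eqref{f1}, so $\xi\ge 0$). First I would check $\xi(\tau,\omega)$ is finite and measurable. By the sublinear growth \eqref{asyom}, for each $\omega$ and each small $\varepsilon>0$ there is $M=M(\omega)$ with $|\omega(s)|\le\varepsilon|s|$ for $s\le -M$, hence $e^{\nu s-\delta\omega(s)}\le e^{(\nu-\delta\varepsilon)s}$ there; choosing $\varepsilon$ with $\nu-\delta\varepsilon>\alpha$ and substituting $u=s+\tau$ bounds the tail of \eqref{comxi} by $e^{-\alpha\tau}\int_{-\infty}^{\tau}e^{\alpha u}F(u)\,du<\infty$ via \eqref{g1}, while the part near $0$ is finite by local integrability; measurability is clear from the integral representation. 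Next I would prove $\xi$ is a complete quasi-solution in the sense of Definition \ref{comporbit}(ii), i.e. $\Psi(t,\tau,\omega,\xi(\tau,\omega))=\xi(\tau+t,\theta_t\omega)$ for $t\ge 0$. Inserting $\xi(\tau,\omega)$ into \eqref{sleq1} through \eqref{psil}, and using $\theta_{-\tau}\omega(r)=\omega(r-\tau)-\omega(-\tau)$ with $\omega(0)=0$, the homogeneous and forcing parts merge into the single integral $\int_{-\infty}^{t}e^{\nu(s-t)-\delta(\omega(s)-\omega(t))}F(s+\tau)\,ds$; the change of variables $s\mapsto s+t$ together with $\theta_t\omega(s)=\omega(s+t)-\omega(t)$ identifies this with the defining integral of $\xi(\tau+t,\theta_t\omega)$.

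The main obstacle is temperedness, where condition \eqref{g2} is indispensable. From the previous computation, $\xi(\tau+t,\theta_t\omega)=\int_{-\infty}^{t}e^{\nu(s-t)-\delta(\omega(s)-\omega(t))}F(s+\tau)\,ds$. Fix $c>0$, $\tau$ and $\omega$. For $t$ sufficiently negative the whole integration range $s\le t$ lies below $-M$, so $-\delta\omega(s)+\delta\omega(t)\le-\delta\varepsilon(s+t)$ and the exponent is bounded by $(c-\nu-\delta\varepsilon)t+(\nu-\delta\varepsilon)s$. After $u=s+\tau$ I would split $e^{(\nu-\delta\varepsilon)u}=e^{\alpha u}e^{(\nu-\delta\varepsilon-\alpha)u}$ and estimate the second factor by its value at $u=t+\tau$, which collapses the bound to
$$
e^{ct}\,\xi(\tau+t,\theta_t\omega)\ \le\ e^{-\alpha\tau}\,e^{(c-2\delta\varepsilon-\alpha)t}\int_{-\infty}^{t+\tau}e^{\alpha u}F(u)\,du .
$$
The delicate point is the choice of $\varepsilon$: it must satisfy both $\delta\varepsilon<\nu-\alpha$ (used in the splitting) and $2\delta\varepsilon<c$, so that the prefactor has time-exponent $c'-\alpha$ with $c'=c-2\delta\varepsilon>0$; then \eqref{g2} applied with this $c'$ drives the right-hand side to $0$ as $t\to-\infty$. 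This gives temperedness of $\xi$, hence $\cala\in\cald$.

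Uniqueness and pullback attraction both rest on the decay of the homogeneous part of \eqref{sleq1}. Pulling back, for any $z$ one computes from \eqref{psil}, \eqref{sleq1} and the $\theta_{-\tau}$ identities (then $u=s-\tau$) that
$$
\Psi(t,\tau-t,\theta_{-t}\omega,z)=e^{-\nu t-\delta\omega(-t)}z+\int_{-t}^{0}e^{\nu u-\delta\omega(u)}F(u+\tau)\,du .
$$
As $t\to\infty$ the integral increases to $\xi(\tau,\omega)$ by \eqref{g1}. For the first term, sublinear growth gives $e^{-\nu t-\delta\omega(-t)}\le e^{-(\nu-\delta\varepsilon)t}$ for large $t$; taking $z=\eta(\tau-t,\theta_{-t}\omega)$ for a tempered quasi-solution $\eta$, or $z\in D(\tau-t,\theta_{-t}\omega)$ for $D\in\cald$, Definition \ref{temset} with $c=(\nu-\delta\varepsilon)/2$ gives $|z|=o(e^{(\nu-\delta\varepsilon)t/2})$, so the product tends to $0$. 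Hence every tempered complete quasi-solution equals $\xi$ (uniqueness), and $\Psi(t,\tau-t,\theta_{-t}\omega,D(\tau-t,\theta_{-t}\omega))\to\xi(\tau,\omega)$, so the singleton family $\cala=\{\{\xi(\tau,\omega)\}\}$ pullback-attracts every $D\in\cald$. Being measurable, compact-valued, invariant (the quasi-solution identity), and tempered, $\cala$ satisfies Definition \ref{defatt}; its identification as \emph{the} $\cald$-pullback attractor also follows from the characterization in Proposition \ref{att} combined with the uniqueness just proved.

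Finally, if $g$ and $h$ are $T$-periodic then $F(s+\tau+T)=F(s+\tau)$, so \eqref{comxi} gives $\xi(\tau+T,\omega)=\xi(\tau,\omega)$ at once, which is exactly the random $T$-periodicity of Definition \ref{persol}(i). The genuinely technical step is the temperedness estimate of the second paragraph, where the exponent bookkeeping must be arranged so that \eqref{g2} applies with a strictly positive constant; the remaining steps are routine manipulations of \eqref{sleq1} and the cocycle shifts.
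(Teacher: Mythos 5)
Your proposal is correct and follows essentially the same route as the paper's proof: finiteness and $T$-periodicity read off directly from \eqref{comxi}, the quasi-solution identity by merging the homogeneous and forcing parts of \eqref{sleq1} into one integral and shifting variables, temperedness via the sublinear growth \eqref{asyom} with $\varepsilon$ chosen small relative to both $\nu-\alpha$ and $c$ so that \eqref{g2} applies with a positive residual exponent, and uniqueness plus pullback attraction from the decay of the homogeneous term applied to the invariance identity. Your exponent bookkeeping (evaluating $e^{(\nu-\delta\varepsilon-\alpha)u}$ at the endpoint $u=t+\tau$) is algebraically equivalent to the paper's bound $e^{(\nu-\delta\varepsilon)s}\le e^{\alpha s}$ for $s\le 0$ together with its choice $\varepsilon=\frac{1}{\delta}\min\{\nu-\alpha,\frac14 c_0\}$, so there is nothing substantively different to flag.
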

  
  \begin{proof}
   First,  by \eqref{asyom}   and \eqref{g1},  
   we can verify      the integral 
   on the right-hand side of \eqref{comxi}
   is well defined    for every
   $\tau \in \R$   and $\omega \in \Omega$.
   If $g$  and $h$ are $T$-periodic, by \eqref{comxi}, we
   see that
    $\xi (\tau +T, \omega) = \xi(\tau, \omega)$
   for all $\tau \in \R$  and $\omega \in \Omega$,
   and hence $\xi$ is $T$-periodic.

   We  now prove $\xi$   is tempered.
   Given    $c_0>0$, $\tau \in \R$   and
   $\omega \in \Omega$, by
    \eqref{comxi}
    we have
     \be\label{pcomsl_1}
   e^{c_0 r} | \xi (\tau +r, \theta_{r} \omega )|
   =
   e^{c_0r} 
 \int_{ -\infty}^0 e^{ \nu s  +\delta \omega (r)  -\delta \omega (s+r)}
  ( |g(s +\tau +r)|   + h(s +\tau +r) ) ds.
  \ee
  Let $\varepsilon = {\frac {1}\delta} \min
    \{ \nu -\alpha, {\frac 14}c_0 \}$.
    By \eqref{asyom}  we find   that
    for every $\omega \in \Omega$,
    there   exists   $T=T(\omega) <0$  such that
    for all  $s \le 0$  and $r \le T$,
    \be\label{pcomsl_2}
    \varepsilon r \le
     \omega (r) \le -  \varepsilon r
     \quad \text { and }
     \  \varepsilon (r+s)  \le
     \omega (r+s) \le -  \varepsilon ( r +s).
     \ee
     It follows  from \eqref{g2}, 
     \eqref{pcomsl_1}-\eqref{pcomsl_2} that
     $$
     \limsup_{r \to -\infty}
     e^{c_0 r} | \xi (\tau +r, \theta_{r} \omega )|
  \le
   \limsup_{r \to -\infty}
   e^{{\frac 12}c_0 r} 
 \int_{ -\infty}^0 e^{ \alpha  s }
  ( |g(s +\tau +r)|   + h(s +\tau +r) ) ds
$$
 $$
 \le e^{-\alpha \tau}
   \limsup_{r \to -\infty}
   e^{ ( {\frac 12}c_0 - \alpha) r} 
 \int_{ -\infty}^{\tau +r}  e^{ \alpha  t }
  ( |g(t)|   + h(t) ) dt
  =0,
 $$
 and hence $\xi$ is tempered. 
 Next, we prove $\xi$   is a  random complete
 quasi-solution of $\Psi$. 
 By \eqref{sleq1}
 and \eqref{comxi}  we find    that for every
 $t \in \R^+$,
 $\tau \in \R$   and
 $\omega \in \Omega$,
 $$
 y(t+\tau, \tau, \theta_{-\tau} \omega, \xi (\tau, \omega))
 =
 e^{-\nu t +\delta \omega (t)} \xi(\tau, \omega)
 +\int_\tau^{t+\tau}
 e^{-\nu (s-t-\tau) -\delta (\omega (s-\tau) -\omega (t))}
 ( |g(s )|   + h(s )) ds
     $$
     $$
     =
    \int_{ -\infty}^0 e^{ \nu (s-t) 
     -\delta ( \omega (s) -\omega (t) ) }
  ( |g(s +\tau)|   + h(s +\tau) ) ds
  +
    \int_{ - t}^0  e^{ -\nu s  -\delta 
    (\omega (s +t)  -\omega (t)) }
  ( |g(s + t+ \tau)|   + h(s +t+ \tau) ) ds.
  $$
    \be\label{pcomsl_3}
     = 
    \int_{ -  \infty }^0  e^{ -\nu s  -\delta 
    (\omega (s +t)  -\omega (t)) }
  ( |g(s + t+ \tau)|   + h(s +t+ \tau) ) ds.
   \ee
   By \eqref{comxi}  and \eqref{pcomsl_3} we get,
   for every
 $t \in \R^+$,
 $\tau \in \R$   and
 $\omega \in \Omega$,
 $$
 \Psi  (t, \tau, \omega, \xi(\tau, \omega))
 = \xi    (\tau +t, \theta_t\omega ).
 $$ 
 This shows   that $\xi$ is a random complete
 quasi-solution of \eqref{leq1}. 
 
 We now prove the attraction property of $\xi$ in $\cald$.
 Recall   that $\cald$ is the collection of
 all  tempered families  given by
 \eqref{tempd}. 
  Let $D  = \{ D(\tau, \omega): \tau \in \R,
  \omega \in \Omega \} \in \cald$ and
  $y_{\tau -t} \in D(\tau -t, \theta_{-t} \omega )$.
   From \eqref{sleq1}
  we have,   for every $t \in \R^+$,
  $\tau \in \R$ and 
  $\omega \in \Omega$,
  $$
   y(\tau,  \tau -t, \theta_{-\tau}\omega, y_{\tau -t} )
  = e^{ -\nu t -\delta \omega (-t)  } y_{\tau -t}
  + \int_{\tau -t}^\tau 
  e^{ \nu (s-\tau) -\delta \omega (s-\tau) }
  ( |g(s)|   + h(s) ) ds
  $$
  \be\label{pcomsl_6}
   = e^{ -\nu t -\delta \omega (-t)  } y_{\tau -t}
  + \int_{ -t}^0 e^{ \nu s  -\delta \omega (s) }
  ( |g(s +\tau)|   + h(s +\tau) ) ds.
  \ee
  By  \eqref{asyom}  we obtain
  $$
  \limsup_{t \to  \infty}
  e^{ -\nu t -\delta \omega (-t)  } |y_{\tau -t}|
  \le
  \limsup_{t \to  \infty}
  e^{ -\nu t -\delta \omega (-t)  } 
  \|D(\tau -t, \theta_{-t} \omega ) \| =0.
  $$ which along with \eqref{psil},
  \eqref{comxi}  and
  \eqref{pcomsl_6}  imply  that
  for every   $D \in \cald$, 
  $\tau \in \R$   and 
  $\omega \in \Omega$,
 \be\label{pcomsl_7}
  \lim_{t \to \infty}
  d(
  \Psi(t,  \tau -t, \theta_{-t} \omega,  
  D(\tau-t, \theta_{-t} \omega)),
  \ \xi(\tau, \omega)) =0.
  \ee
  Note that
  \eqref{pcomsl_7} implies    
  $\xi$ pullback attracts every tempered family of subsets
  of $\R$, and hence
  $\{ \{\xi(\tau, \omega)\}: \tau \in \R,  \omega \in
 \Omega \}$ is a $\cald$-pullback attractor of $\Psi$.
  
  Taking     an arbitrary  tempered  complete
 quasi-solution  $\zeta$ of \eqref{leq1}, we 
 now prove $\zeta =\xi$.  Since $\zeta$ is a
 complete quasi-solution,  we have,
 for each $\tau \in \R$   and
 $ \omega \in \Omega$,
 \be\label{pcomsl_8}
 \Psi(t,  \tau -t, \theta_{-t} \omega,  
  \zeta (\tau-t, \theta_{-t} \omega ))
  = \zeta (\tau, \omega).
  \ee
  Since $\xi$ is tempered, by \eqref{pcomsl_7}
  and \eqref{pcomsl_8} we get
 $\zeta(\tau, \omega) = \xi(\tau, \omega)$
    for every  $\tau \in \R$   and
 $ \omega \in \Omega$.
 This implies  the uniqueness of tempered complete
 quasi-solutions of \eqref{leq1}, and thus 
 completes     the proof.
    \end{proof}
  
  We now prove existence of $\cald$-pullback attractors
  for  equation \eqref{noneq1}.

  \begin{thm}
  \label{nonatt}
  Suppose  \eqref{f1} and \eqref{g1}-\eqref{g2}  hold.  Then 
  $\Phi$ associated with  
   \eqref{noneq1}  has a  unique 
   $\cald$-pullback attractor
   $\cala \in \cald$ which is characterized by,
   for every $\tau \in \R$   and $\omega \in \Omega$,
 \be\label{nonatt_1}
   \cala (\tau, \omega)
     =\{ \xi( \tau, \omega):
   \xi  \text{ is a }   \cald\text{-}{\text{complete  quasi-solution of }} \Phi \}.
   \ee
   \end{thm}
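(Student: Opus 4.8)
The plan is to verify the two hypotheses of Proposition \ref{att} — existence of a closed, measurable, tempered $\cald$-pullback absorbing set, and $\cald$-pullback asymptotic compactness — and then read off the characterization \eqref{nonatt_1} directly from that proposition. Since the phase space here is $X=\R$, the asymptotic compactness is essentially free: every bounded subset of $\R$ is precompact, so once a bounded absorbing set is produced, any sequence $\Phi(t_n,\tau-t_n,\theta_{-t_n}\omega,x_n)$ with $t_n\to\infty$ and $x_n$ drawn from a tempered family is eventually confined to a bounded interval and hence has a convergent subsequence. Thus the whole burden falls on constructing the absorbing set, and this is exactly where the comparison principle and Lemma \ref{comsl} enter. (I would also note at the outset that $\Phi$ is a continuous cocycle on $\R$, which follows from the smoothness of $f$ together with the local integrability of $g,h$ as in the well-posedness discussion for \eqref{geq1}.)

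First I would deploy the comparison principle established above. Because $f$ satisfies \eqref{f1}, which controls $f(t,x)x$ symmetrically in the sign of $x$, the nonnegative solution $y$ of the linear equation \eqref{leq1} dominates the modulus of the solution of \eqref{noneq1}:
$$
|x(t,\tau,\omega,x_\tau)| \le y(t,\tau,\omega,|x_\tau|), \quad t \ge \tau ,
$$
where the right-hand side is the explicit expression \eqref{sleq1}. In cocycle form this reads $|\Phi(t,\tau,\omega,x_\tau)| \le \Psi(t,\tau,\omega,|x_\tau|)$. Since the homogeneous part of \eqref{leq1} is linear with positive integrating factor, $\Psi(t,\tau,\omega,\cdot)$ is order preserving, so for $x_\tau \in D(\tau,\omega)$ with $D\in\cald$ I obtain
$$
|\Phi(t,\tau,\omega,x_\tau)| \le \Psi(t,\tau,\omega,\|D(\tau,\omega)\|).
$$

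Next I would invoke Lemma \ref{comsl}: $\Psi$ admits the tempered complete quasi-solution $\xi$ of \eqref{comxi}, and $\xi$ pullback attracts every tempered family, which is precisely \eqref{pcomsl_7}. Using this measurable, tempered function, I define the closed interval $K(\tau,\omega) = \{x\in\R : |x| \le \xi(\tau,\omega)+1\}$. The family $K=\{K(\tau,\omega)\}$ is measurable because $\xi(\tau,\cdot)$ is, and it is tempered because $\xi$ is, so $K\in\cald$. To see $K$ is $\cald$-pullback absorbing, fix $D\in\cald$, $\tau$, $\omega$ and apply the comparison bound to the tempered single-point family $\{\|D(\tau,\omega)\|\}$: by \eqref{pcomsl_7} the quantity $\Psi(t,\tau-t,\theta_{-t}\omega,\|D(\tau-t,\theta_{-t}\omega)\|)$ converges to $\xi(\tau,\omega)$ as $t\to\infty$, hence is eventually strictly below $\xi(\tau,\omega)+1$, so $\Phi(t,\tau-t,\theta_{-t}\omega,D(\tau-t,\theta_{-t}\omega))\subseteq K(\tau,\omega)$ for all large $t$.

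With a closed, measurable, tempered $\cald$-pullback absorbing set in hand and asymptotic compactness automatic in $\R$, Proposition \ref{att} yields a unique $\cald$-pullback attractor $\cala\in\cald$ together with the quasi-solution characterization \eqref{nonatt_1}. The main obstacle is the first step — correctly using the comparison principle to bound $|x|$, rather than merely $x$, by the scalar linear equation. The symmetric structure of \eqref{f1}, controlling $f(t,x)x$ for both signs of $x$, is exactly what lets the single nonnegative super-solution $y$ dominate the modulus of the solution; everything downstream (measurability, temperedness, absorption, and the passage to asymptotic compactness) is routine bookkeeping built on top of Lemma \ref{comsl}.
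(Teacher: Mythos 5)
Your proposal is correct and follows essentially the same route as the paper: bound $|x|$ by the solution of the linear equation \eqref{leq1} via the comparison principle, use the tempered quasi-solution $\xi$ of Lemma \ref{comsl} to build a compact, measurable, tempered absorbing interval (you take radius $\xi(\tau,\omega)+1$ and invoke \eqref{pcomsl_7}, where the paper takes $[-2\xi,2\xi]$ and re-derives the decay of the initial-data term directly from \eqref{sleq1}), and then apply Proposition \ref{att}, with asymptotic compactness automatic from compactness of the absorbing set in $\R$. These differences are purely cosmetic, so nothing further is needed.
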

   
   \begin{proof}
   Let  $t \in \R^+$,
    $\tau \in \R$, $\omega \in \Omega$,
   $D \in \cald$   and $x_{\tau -t} \in D(\tau -t, \theta_{-t} \omega)$.
   By  the comparison principle,   we find   the solution $x$   of
   \eqref{noneq1} satisfies
   $$
   |x(\tau, \tau -t, \theta_{-\tau} \omega, x_{\tau -t})|
   \le 
   y(\tau, \tau -t, \theta_{-\tau} \omega, |x_{\tau -t}|),
   $$
   where $y$  is  the solution of the linear   equation
   \eqref{leq1}. 
   Then by \eqref{sleq1}   we have
   \be\label{pnonatt_1}
    |x(\tau, \tau -t, \theta_{-\tau} \omega, x_{\tau -t})|
   \le 
   e^{ -\nu t -\delta \omega (-t)  } |x_{\tau -t}|
  + \int_{ -t}^0 e^{ \nu s  -\delta \omega (s) }
  ( |g(s +\tau)|   + h(s +\tau) ) ds.
  \ee
  Since $x_{\tau -t} \in D(\tau -t, \theta_{-t} \omega)$
  and $D \in \cald$, by \eqref{asyom} we get
 \be\label{pnonatt_2}
  \limsup_{t \to  \infty}
  e^{ -\nu t -\delta \omega (-t)  } |x_{\tau -t}|
  \le
  \limsup_{t \to  \infty}
  e^{ -\nu t -\delta \omega (-t)  } 
  \|D(\tau -t, \theta_{-t} \omega ) \| =0.
  \ee
  It follows    from  \eqref{comxi}  and 
  \eqref{pnonatt_1}-\eqref{pnonatt_2}   that, for 
  every $\tau \in \R$  and $\omega \in \Omega$,
  \be\label{pnonatt_3}
  \limsup_{t \to \infty}
   |x(\tau, \tau -t, \theta_{-\tau} \omega, x_{\tau -t})|
   \le \xi(\tau, \omega),
   \ee
   where $\xi$ is   the complete quasi-solution of
   \eqref{leq1}   given     by \eqref{comxi}.
   On the other hand, by \eqref{pnonatt_1}  and 
   \eqref{pnonatt_2},  there exists $T=T(\tau, \omega, D)>0$
   such that for   all $t \ge T$,
   \be\label{pnonatt_5}
   |\Phi(t, \tau -t, \theta_{-t}  \omega, x_{\tau -t})| = 
   |x(\tau, \tau -t, \theta_{-\tau} \omega, x_{\tau -t})|
   \le 2 \xi(\tau, \omega).
   \ee
   Given $\tau \in \R$ and $\omega \in \Omega$,  define
   $K(\tau , \omega) = [-2\xi(\tau, \omega), 2\xi(\tau, \omega)]$.
   Since $\xi$ is measurable and tempered, by \eqref{pnonatt_5}
   we find $K = \{ K(\tau, \omega): \tau \in \R, \omega \in \Omega \}$
   is a  $\cald$-pullback absorbing set  of $\Phi$.
   Since $K$ is compact,  by Proposition \ref{att}, $\Phi$   has a unique
   $\cald$-pullback attractor $\cala$ which is characterized  by
   \eqref{nonatt_1}.
    \end{proof}
   
   We next further characterize  the structures of the tempered
   attractor of equation \eqref{noneq1}.

  \begin{thm}
  \label{nonatt2}
  Suppose  \eqref{f1} and \eqref{g1}-\eqref{g2}  hold.  Then 
    the cocycle $\Phi$ associated with  
   \eqref{noneq1}  has   two tempered  complete quasi-solutions
   $x^*$  and $x_*$ such that  
   $\cala = \{ [x_*(\tau, \omega), \  x^* (\tau, \omega)],
   \tau \in \R,  \omega \in \Omega \}$
   is the unique $\cald$-pullback attractor of $\Phi$.
  \end{thm}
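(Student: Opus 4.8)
The plan is to exploit the one-dimensional structure of \eqref{noneq1} through the comparison principle, which makes the cocycle order-preserving. Since solutions of the scalar equation \eqref{noneq1} are unique and cannot cross, $x_1 \le x_2$ forces $\Phi(t,\tau,\omega,x_1) \le \Phi(t,\tau,\omega,x_2)$ for all $t \ge 0$, so each map $\Phi(t,\tau,\omega,\cdot): \R \to \R$ is continuous and nondecreasing. By Theorem \ref{nonatt}, $\Phi$ already has a unique $\cald$-pullback attractor $\cala$, every fiber $\cala(\tau,\omega)$ is a nonempty compact subset of $\R$, and $\cala$ is characterized by the complete quasi-solutions of $\Phi$ through \eqref{nonatt_1}.

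First I would introduce the two extremal sections
$$
x^*(\tau,\omega) = \max \cala(\tau,\omega), \qquad x_*(\tau,\omega) = \min \cala(\tau,\omega),
$$
which are well defined because $\cala(\tau,\omega)$ is compact and which therefore belong to $\cala(\tau,\omega)$. Since $\cala \in \cald$ is tempered, both $x^*$ and $x_*$ are tempered, and their measurability follows from that of the set-valued map $\cala$. To see they are complete quasi-solutions, I would use the invariance $\Phi(t,\tau,\omega,\cala(\tau,\omega)) = \cala(\tau+t,\theta_t\omega)$: because $\Phi(t,\tau,\omega,\cdot)$ is continuous and nondecreasing, it carries the maximum (resp.\ minimum) of $\cala(\tau,\omega)$ to the maximum (resp.\ minimum) of $\cala(\tau+t,\theta_t\omega)$, giving
$$
\Phi(t,\tau,\omega,x^*(\tau,\omega)) = x^*(\tau+t,\theta_t\omega), \qquad \Phi(t,\tau,\omega,x_*(\tau,\omega)) = x_*(\tau+t,\theta_t\omega),
$$
which are precisely the defining identities of complete quasi-solutions in Definition \ref{comporbit}(ii).

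It then remains to identify $\cala(\tau,\omega)$ with $[x_*(\tau,\omega), x^*(\tau,\omega)]$. The inclusion $\cala(\tau,\omega) \subseteq [x_*(\tau,\omega), x^*(\tau,\omega)]$ is immediate from the definitions. For the reverse inclusion I would again use monotonicity. Fix $c \in [x_*(\tau,\omega), x^*(\tau,\omega)]$. For each $t>0$ the map $\Phi(t,\tau-t,\theta_{-t}\omega,\cdot)$ is continuous and nondecreasing and, by the quasi-solution identities above applied at $(\tau-t,\theta_{-t}\omega)$, sends $x_*(\tau-t,\theta_{-t}\omega)$ to $x_*(\tau,\omega)$ and $x^*(\tau-t,\theta_{-t}\omega)$ to $x^*(\tau,\omega)$; the intermediate value theorem then yields a preimage $c_t \in [x_*(\tau-t,\theta_{-t}\omega), x^*(\tau-t,\theta_{-t}\omega)]$ with $\Phi(t,\tau-t,\theta_{-t}\omega, c_t) = c$. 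Flowing each $c_t$ forward produces orbits all passing through $c$ at time $\tau$; since these preimages lie in the tempered attractor intervals, a diagonal extraction together with the cocycle property and continuity of $\Phi$ produces a complete quasi-solution $\xi$ with $\xi(\tau,\omega) = c$. By the characterization \eqref{nonatt_1} this gives $c \in \cala(\tau,\omega)$, completing the identification.

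The main obstacle is this last filling-in step: assembling the family of backward preimages $\{c_t\}$ into a single complete quasi-solution requires a careful compactness argument to guarantee cocycle consistency across all backward times, and it is exactly this point that upgrades $\cala(\tau,\omega)$ from a subset of the interval to the full interval $[x_*(\tau,\omega), x^*(\tau,\omega)]$. By contrast, establishing the order-preserving property of $\Phi$, the preservation of extrema under invariance, and the temperedness and measurability of $x^*$ and $x_*$ are comparatively routine.
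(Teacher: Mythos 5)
Your proof is correct, but it takes a genuinely different route from the paper's. The paper builds $x^*$ and $x_*$ bottom-up: by the comparison principle, the pullback trajectories $x(\tau,\tau-t,\theta_{-\tau}\omega,\pm\xi(\tau-t,\theta_{-t}\omega))$ started from the explicit tempered quasi-solution $\xi$ of the linear majorant \eqref{leq1} are monotone and bounded in $t$, hence converge to $x^*$ and $x_*$ as in \eqref{natt2_7}--\eqref{natt2_8}; the quasi-solution identities follow by continuity in initial data, and the inclusion $[x_*(\tau,\omega),x^*(\tau,\omega)]\subseteq\cala(\tau,\omega)$ is obtained by observing that any $z$ in the interval is trapped between the two complete orbits, so its trajectory extends backward globally, defines a $\cald$-complete solution, and gives $z\in\cala(\tau,\omega)$ via the characterization \eqref{nonatt_1}. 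You instead define $x^*$ and $x_*$ top-down as the fiberwise max and min of the attractor and use order-preservation of the scalar flow to transport extrema under invariance---a clean argument the paper does not use, and one that needs nothing beyond Theorem \ref{nonatt} and monotonicity. Two remarks. First, the step you flag as the main obstacle dissolves in dimension one: since $f$ is smooth, the time-reversed equation also has unique solutions, so each map $\Phi(t,\tau,\omega,\cdot)$ is injective (indeed strictly increasing); your IVT preimage $c_t$ is therefore unique, and the family $\{c_t\}$ is automatically cocycle-consistent, yielding the complete orbit through $c$ with no diagonal extraction---this is essentially the paper's trapping argument in disguise. Second, note what the paper's constructive route buys downstream: the limit representations \eqref{glamatt_1}--\eqref{glamatt_2}, i.e.\ \eqref{natt2_7}--\eqref{natt2_8} with $\xi$ from \eqref{gcomxi}, are what later give the sign and two-sided bounds on $x^\pm_\lambda$ in Lemma \ref{gcomslam} and the periodicity in Theorem \ref{thmgps}; your abstract max/min definition proves Theorem \ref{nonatt2} itself more economically and in greater generality (any order-preserving cocycle on $\R$ with a $\cald$-pullback attractor), but it would not by itself supply those explicit formulas.
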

  
  \begin{proof}
  Let $t_1 \ge t_2 >0$, $\tau \in \R$   and
  $\omega \in \Omega$.  By    the comparison principle,
  for the solution $x$   of \eqref{noneq1}
  we  have
  \be
  \label{natt2_1}
  x(\tau -t_2, \tau -t_1, \theta_{-\tau} \omega,
  \xi(\tau -t_1,  \theta_{-t_1} \omega))
  \le
  y(\tau -t_2, \tau -t_1, \theta_{-\tau} \omega,
  \xi(\tau -t_1,  \theta_{-t_1} \omega)),
  \ee
  where $y$  is the solution  of \eqref{leq1}
  and $\xi$   is   given by \eqref{comxi}. 
  Since $\xi$    is  a complete quasi-solution
  of \eqref{leq1}, we get
  \be\label{natt2_2}
   y(\tau -t_2, \tau -t_1, \theta_{-\tau} \omega,
  \xi(\tau -t_1,  \theta_{-t_1} \omega))
  =\Psi (t_1-t_2, \tau-t_1,
  \theta_{-t_1} \omega,
   \xi(\tau -t_1,  \theta_{-t_1} \omega ))
   = \xi(\tau -t_2,  \theta_{-t_2} \omega ).
   \ee
   By  \eqref{natt2_1}-\eqref{natt2_2}  we obtain
    \be
  \label{natt2_3}
  x(\tau -t_2, \tau -t_1, \theta_{-\tau} \omega,
  \xi(\tau -t_1,  \theta_{-t_1} \omega))
  \le \xi(\tau -t_2,  \theta_{-t_2} \omega ).
  \ee
  By \eqref{natt2_3} 
  and  the comparison principle,  we get
 $$
x(\tau, \tau-t_2, \theta_{-\tau} \omega,
  x(\tau -t_2, \tau -t_1, \theta_{-\tau} \omega,
  \xi(\tau -t_1,  \theta_{-t_1} \omega) ))
  \le 
  x(\tau, \tau-t_2, \theta_{-\tau} \omega,
  \xi(\tau -t_2,  \theta_{-t_2} \omega  )),
 $$
 which implies that 
 for all $t_1 \ge t_2 >0$, $\tau \in \R$   and
  $\omega \in \Omega$, 
 \be\label{natt2_6}
 x(\tau, \tau-t_1, \theta_{-\tau} \omega,
  \xi(\tau -t_1,  \theta_{-t_1} \omega ))
  \le 
  x(\tau, \tau-t_2, \theta_{-\tau} \omega,
  \xi(\tau -t_2,  \theta_{-t_2} \omega  )).
\ee
  By \eqref{natt2_6}   we find that
  $  x(\tau, \tau-t, \theta_{-\tau} \omega,
  \xi(\tau -t,  \theta_{-t} \omega )) $
  is monotone in $t \in \R^+$  for each
  fixed $\tau  $   and $\omega  $.
  Since $\xi$  is tempered, by \eqref{pnonatt_5}
  we  see   $  x(\tau, \tau-t, \theta_{-\tau} \omega,
  \xi(\tau -t,  \theta_{-t} \omega )) $ is bounded
  in $t\in \R^+$.  Therefore, for every $\tau \in \R$   and
  $\omega \in \Omega$, 
  there   exists $x^*(\tau, \omega)\in \R$ 
    such that
  \be
  \label{natt2_7}
  \lim_{t \to \infty}
     x(\tau, \tau-t, \theta_{-\tau} \omega,
  \xi(\tau -t,  \theta_{-t} \omega ))
  = x^*(\tau, \omega).
  \ee
  By  the attraction property of $\cala$ of the $\cald$-pullback
  attractor of \eqref{noneq1}, we have
  $x^*(\tau, \omega) \in \cala(\tau, \omega)$
  for every $\tau$  and $\omega$. 
  By \eqref{pnonatt_3}  and \eqref{natt2_7}
  we get
  $ | x^*(\tau, \omega)|  \le \xi(\tau, \omega)$,
  and hence $x^*$ is tempered.
  By a similar argument,  we can show  there exists
    $x_*(\tau, \omega)
    \in \cala(\tau, \omega)$
    with $ |x_*(\tau, \omega)|   \le  \xi(\tau, \omega)$ 
    such that
  \be
  \label{natt2_8}
  \lim_{t \to \infty}
     x(\tau, \tau-t, \theta_{-\tau} \omega,
  -  \xi(\tau -t,  \theta_{-t} \omega ))
  = x_*(\tau, \omega).
  \ee
  Note that  $x_*$ is tempered.
  By  \eqref{natt2_7}-\eqref{natt2_8} and   the
  comparison principle, we have
  $x_*(\tau, \omega) \le x^*(\tau, \omega)$.
  Note that \eqref{pnonatt_3} implies
  \be\label{natt2_9}
  \cala(\tau, \omega) \subseteq [-\xi(\tau, \omega),
  \xi(\tau, \omega)],
  \quad \text{ for all } \ \tau \in \R
  \ \text{  and }
  \ \omega \in \Omega.
  \ee
   Based on \eqref{natt2_9} we will  prove
   \be\label{natt2_10}
  \cala(\tau, \omega) \subseteq [x_*(\tau, \omega),
   x^*(\tau, \omega)],
  \quad \text{ for all } \ \tau \in \R
  \ \text{  and }
  \ \omega \in \Omega.
  \ee
  Let $x_0 \in \cala(\tau, \omega)$ and $t_n \to \infty$.
  By  the invariance of $\cala$,  
  there exists
  $x_{0,n} \in \cala(\tau -t_n, \theta_{-t_n} \omega )$
  for every $n $ 
  such that
 $ x_0 =  x(\tau, \tau-t_n, \theta_{-\tau} \omega, x_{0,n})$.
  Since  $x_{0,n} \in \cala(\tau -t_n, \theta_{-t_n} \omega )$,
  by \eqref{natt2_9} we  have
  $|x_{0,n}| \le \xi (\tau -t_n, \theta_{-t_n} \omega )$.
  Then by  the comparison principle we get
  $$
   x_0=x(\tau, \tau-t_n, \theta_{-\tau} \omega, x_{0,n})
   \le 
    x(\tau, \tau-t_n, \theta_{-\tau} \omega,
       \xi (\tau -t_n, \theta_{-t_n} \omega  )).
       $$
       Letting  $n \to \infty$, by \eqref{natt2_7} we get
       $x_0 \le x^*(\tau, \omega)$. 
       Similarly,  by \eqref{natt2_8} one can verify
        $x_0  \ge  x_*(\tau, \omega)$. 
        Thus \eqref{natt2_10}   follows. 
        Before proving   the converse of \eqref{natt2_10}, we
        first prove $x^*$  and $x_*$  are complete quasi-solutions of
        \eqref{noneq1}. 
        By \eqref{natt2_7}  and the continuity of solutions in initial
        data,   we get for every
        $s \in \R^+$, $\tau \in \R$   and $\omega \in \Omega$,
        $$ x(s+ \tau, \tau, \theta_{-\tau} \omega, x^*(\tau, \omega))
   = \lim_{t \to \infty}
      x(s+ \tau, \tau, \theta_{-\tau} \omega,    
         x(\tau, \tau-t, \theta_{-\tau} \omega,
  \xi(\tau -t,  \theta_{-t} \omega ) ))
$$
$$  = \lim_{t \to \infty}   
         x(s+\tau, \tau-t, \theta_{-\tau} \omega,
  \xi(\tau -t,  \theta_{-t} \omega ))
  = \lim_{r \to \infty}   
         x(s+\tau,  s+\tau-r, \theta_{-\tau} \omega,
  \xi(s+ \tau -r,  \theta_{s-r} \omega ))
  $$
  \be\label{natt2_20}
   = \lim_{t \to \infty}   
         x(s+\tau,  s+\tau-t, \theta_{-\tau-s} \theta_s \omega,
  \xi(s+ \tau -t,  \theta_{-t}  \theta_{s} \omega ) )
  = x^*(\tau +s, \theta_s \omega),
 \ee
  where the last limit is obtained by \eqref{natt2_7}. 
  By \eqref{natt2_20} we get 
 for every
        $s \in \R^+$, $\tau \in \R$   and $\omega \in \Omega$,
       \be\label{natt2_21}
       \Phi(s, \tau, \omega, x^*(\tau, \omega))
       =   x^*(\tau +s, \theta_s \omega),
       \ee
       and hence $x^*$   is a complete quasi-solution
       of $\Phi$. 
       Similarly, one can check  that $x_*$ is a complete
       quasi-solution of $\Phi$, i.e., 
        for every
        $s \in \R^+$, $\tau \in \R$   and $\omega \in \Omega$,
       \be\label{natt2_22}
       \Phi(s, \tau, \omega, x_*(\tau, \omega))
       =   x_*(\tau +s, \theta_s \omega).
       \ee
       Finally, we prove    the converse of 
       \eqref{natt2_10}, i.e., 
       \be\label{natt2_25}
    [x_*(\tau, \omega),
   x^*(\tau, \omega)] \subseteq
    \cala(\tau, \omega)    ,
  \quad \text{ for all } \ \tau \in \R
  \ \text{  and }
  \ \omega \in \Omega.
  \ee
  Given $\tau \in \R$,
  $\omega \in \Omega$
  and $z \in [x_*(\tau, \omega), x^*(\tau, \omega)]$,
  by  the comparison principle,
  we  find that
  $x(t+\tau, \tau, \theta_{-\tau} \omega,  z)$ 
  is defined  for   all $t \in \R$
  and 
  $$
  x(t+\tau, \tau, \theta_{-\tau} \omega,  x_* (\tau, \omega) )
  \le
  x(t+\tau, \tau, \theta_{-\tau} \omega,  z)
  \le 
    x(t+\tau, \tau, \theta_{-\tau} \omega,  x^* (\tau, \omega) ),
    $$
    that is,
      $$
      \Phi(t, \tau, \omega, x_* (\tau, \omega)  )
  \le
  x(t+\tau, \tau, \theta_{-\tau} \omega,  z)
  \le 
     \Phi(t, \tau, \omega, x^* (\tau, \omega)  ).
    $$
    This along with \eqref{natt2_21}-\eqref{natt2_22} shows  that
    \be\label{natt2_30}
       x_* (\tau +t,  \theta_t \omega )
  \le
  x(t+\tau, \tau, \theta_{-\tau} \omega,  z)
  \le 
      x^* (\tau +t,  \theta_t \omega  ).
\ee
Since $x^*$   and $x_*$ are tempered, by \eqref{natt2_30} we
know  that 
$\psi (t, \tau, \omega)
= x(t+\tau, \tau, \theta_{-\tau} \omega,  z)$
is a $\cald$-complete solution of $\Phi$.
Therefore, by  \eqref{nonatt_1} we find that
$z = \psi (0, \tau, \omega) \in \cala(\tau, \omega)$,
which yields \eqref{natt2_25}.
It follows    from 
           \eqref{natt2_10}
           and        \eqref{natt2_25} that
              \be\label{natt2_40}
\cala(\tau, \omega)  =  [x_*(\tau, \omega),
   x^*(\tau, \omega)]  ,
  \quad \text{ for all } \ \tau \in \R
  \ \text{  and }
  \ \omega \in \Omega.
  \ee
  By \eqref{natt2_21}-\eqref{natt2_22}
  and \eqref{natt2_40} we conclude    the proof.
   \end{proof}
   
   In what   follows,  we discuss pitchfork bifurcation
   of complete quasi-solutions of 
   of \eqref{geq1} as $\lambda$ crosses zero from below.
   Let
   \be\label{f2}
   f(t,x) =  \lambda x -\beta (t) x^3
   +\gamma (t,x),
   \quad t  \in \R
   \text{  and } \ x \in \R.
 \ee
  By   \eqref{bcon1}   and  \eqref{gacon1} we have,
  for all $t\in \R$  and $x \in \R$,
 \be\label{nf1}
    f(t,x) x
  \le \lambda x^2
   -\beta_0 x^4 + c_2 x^4
   \le -x^2
    + \left ( (\lambda +1) |x|
    -(\beta_0 -c_2) |x|^3  \right ) |x|.
  \ee
 Since $\beta_0 > c_2$,
  by Young\rq{}s inequality,  there exists   a
  positive number $c$   such that
  $$
  |\lambda +1 |  |x|
  \le {\frac 12} (\beta_0 -c_2) |x|^3 +c,
  $$
  which along with \eqref{nf1} implies   that
  for all $t \in \R$  and $x\in \R$,
  $$
    f(t,x) x
  \le -x^2 + c |x|.
  $$
  Therefore, $f$ 
  given by
  \eqref{f2}  satisfies 
  condition \eqref{f1}
  with $\nu =1$   and $h(t) =c$
  for all $t \in \R$.  Let $g(t) =0$ for all
  $t \in \R$. Then $g$  and $h$   satisfy
  \eqref{g1}  and \eqref{g2}
  for every $\alpha>0$. 
  In this case,  $\xi$ as defined  by \eqref{comxi}
  becomes 
  \be\label{gcomxi}
  \xi (\tau, \omega)
  = c \int_{-\infty}^0 e^{s -\delta \omega (s)} ds,
  \quad \tau \in \R  \ \text{  and } \  \omega \in \Omega.
  \ee
  It is easy to check that
  $\xi$ given by \eqref{gcomxi}  has a tempered
  reciprocal, i.e.,   for every $c_0>0$, $\tau \in \R$
  and $\omega \in \Omega$,
  \be\label{grecomxi}
  \lim_{t \to \infty}
  e^{-c_0 t} \xi^{-1}(\tau -t, \theta_{-t} \omega)
  =0.
  \ee
   By 
  Theorem \ref{nonatt2} we find   that
  for each   $\lambda \in \R$, 
  equation \eqref{geq1} has a unique
   $\cald$-pullback attractor
  $\cala_\lambda \in \cald$ such  that
  for every $\tau \in \R$  and
  $\omega \in \Omega$,
\be \label{glamatt}
  \cala_\lambda (\tau, \omega)
  = [x^-_\lambda (\tau, \omega), \ 
  x^+_\lambda (\tau,  \omega) ],
 \ee
  where $x^+_\lambda$
  and $x^-_\lambda$  are tempered
  complete quasi-solutions   of 
  \eqref{geq1} given by 
   \be
  \label{glamatt_1}
    x^+_\lambda(\tau, \omega)
    =
  \lim_{t \to \infty}
     x(\tau, \tau-t, \theta_{-\tau} \omega,
  \xi(\tau -t,  \theta_{-t} \omega )),
  \ee
  and 
  \be
  \label{glamatt_2}
  x^-_\lambda (\tau, \omega)
  =
  \lim_{t \to \infty}
     x(\tau, \tau-t, \theta_{-\tau} \omega,
  - \xi(\tau -t,  \theta_{-t} \omega )),
  \ee
  with  $\xi$  being   defined  by 
  \eqref{gcomxi}. 
  Note that \eqref{glamatt_1}  and \eqref{glamatt_2}
  follow  from 
    \eqref{natt2_7} and \eqref{natt2_8}
    by replacing $x^*$
    and $x_*$
    by $x^+_\lambda$
    and $x^-_\lambda$, respectively.
    By the comparison principle, 
    we find   from \eqref{glamatt_1}-\eqref{glamatt_2}
    that  $x^+_\lambda(\tau, \omega) \ge 0$
    and $x^-_\lambda(\tau, \omega) \le 0$
    for all $\tau \in \R$  and  $\omega \in \Omega$.
    Actually,  $x^+_\lambda(\tau, \omega) > 0$
    and $x^-_\lambda(\tau, \omega) < 0$
    as demonstrated below.

  \begin{lem}
  \label{gcomslam}
  Suppose \eqref{bcon1}  and \eqref{gacon1} hold.
   Then for every $\lambda \in \R$,  the 
   tempered complete quasi-solutions
   $x^+_\lambda$  and $x^-_\lambda$ in \eqref{glamatt}
   satisfy, for  all $\tau \in \R$   and
   $\omega \in \Omega$,
    $x^+_\lambda (\tau, \omega) >0$,
    $x^-_\lambda (\tau, \omega) <0$ and 
   \be\label{gcomslam1}
     {\frac 1{ \sqrt{
       2 (\beta_1 -c_1)  \int_{-\infty}^0
      e^{2\lambda  r + 2\delta \omega r
       } dr
      }} }
       \le
   |x_\lambda ^\pm (\tau, \omega)|
   \le
     {\frac 1{
   \sqrt{2(\beta_0 -c_2) \int^0_{-\infty}
   e^{2 \lambda r + 2 \delta \omega (r)} dr
     } } }.
     \ee
     If $\lambda>0$,  then   the zero solution
     of \eqref{geq1}  is unstable in $\R$.
     \end{lem}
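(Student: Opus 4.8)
The plan is to sandwich the full equation \eqref{geq1} between two copies of the exactly solvable equation \eqref{seq1} with constant coefficients, and then transport the closed-form pullback limits of the previous subsection through the comparison principle. First I would record the pointwise bound on the drift. For $x>0$, conditions \eqref{bcon1} and \eqref{gacon1} give $\gamma(t,x)\le c_2x^3$ and $\gamma(t,x)\ge c_1x^3$, so that
\be
\lambda x-(\beta_1-c_1)x^3\ \le\ \lambda x-\beta(t)x^3+\gamma(t,x)\ \le\ \lambda x-(\beta_0-c_2)x^3
\ee
for all $t\in\R$, with the mirror chain holding for $x<0$. Since $c_1\le c_2<\beta_0\le\beta_1$, both constants $\beta_0-c_2$ and $\beta_1-c_1$ are strictly positive, so each bounding drift is of the pure-cubic type governed by \eqref{seq1}.

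Next I would introduce the two comparison solutions $\overline{x}$ and $\underline{x}$ associated with \eqref{seq1} in which $\beta(\cdot)$ is replaced by the constants $\beta_0-c_2$ and $\beta_1-c_1$, respectively. By the comparison principle (the same tool used to derive \eqref{natt2_7}--\eqref{natt2_8}), any solution of \eqref{geq1} issued from a positive datum is trapped between the corresponding solutions of these two equations. Applying this with the positive datum $\xi(\tau-t,\theta_{-t}\omega)$ from \eqref{gcomxi} and letting $t\to\infty$ turns the definition \eqref{glamatt_1} into
\be
\underline{x}^{+}_{\lambda}(\tau,\omega)\ \le\ x^{+}_{\lambda}(\tau,\omega)\ \le\ \overline{x}^{+}_{\lambda}(\tau,\omega).
\ee
The outer terms are the positive complete quasi-solutions of the constant-coefficient equations, obtained by evaluating \eqref{xplus} with $\beta$ replaced by the appropriate constant; this yields exactly the two-sided estimate \eqref{gcomslam1}. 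Running the same argument with the negative datum $-\xi(\tau-t,\theta_{-t}\omega)$ and \eqref{glamatt_2} produces the mirror estimate for $x^-_\lambda$. Strict positivity of $x^+_\lambda$ (and strict negativity of $x^-_\lambda$) then follows because the lower bound $\underline{x}^{+}_{\lambda}(\tau,\omega)$ is itself strictly positive, being the pullback limit \eqref{seq6} of the constant-coefficient equation from a positive datum.

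Finally, for the instability claim with $\lambda>0$ I would argue exactly as in Case (iii) of Lemma \ref{zerost}. Fix $\tau,\omega$ and take any $x_0>0$, however small. The comparison principle gives $\Phi(t,\tau-t,\theta_{-t}\omega,x_0)=x(\tau,\tau-t,\theta_{-\tau}\omega,x_0)\ge\underline{x}(\tau,\tau-t,\theta_{-\tau}\omega,x_0)$, and by \eqref{seq6} the right-hand side converges as $t\to\infty$ to $\underline{x}^{+}_{\lambda}(\tau,\omega)>0$, a value independent of $x_0$. Hence no pullback orbit starting arbitrarily close to $0$ can remain in a neighborhood of radius smaller than $\underline{x}^{+}_{\lambda}(\tau,\omega)$, so $x=0$ fails to be pullback Lyapunov stable in $\R$ in the sense of Definition \ref{fixedpt}.

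The main obstacle I anticipate is not the comparison inequalities themselves but the careful passage to the pullback limit: one must verify that the comparison solutions' limits from the moving, $\omega$-dependent data $\pm\xi(\tau-t,\theta_{-t}\omega)$ coincide with the limits from fixed positive and negative data, so that the closed forms of \eqref{xplus} and \eqref{seq6} genuinely apply. This rests on the explicit formula \eqref{seq5}, in which the initial-data term carries the factor $e^{-2\lambda t+2\delta\omega(-t)}$; temperedness of the reciprocal of $\xi$ recorded in \eqref{grecomxi}, together with \eqref{asyom}, forces this factor times $\xi^{-2}$ to vanish as $t\to\infty$ when $\lambda>0$. Establishing that the resulting limit is bounded away from zero---so that the sign is strict rather than merely nonnegative---is the delicate point, and it is precisely the strict positivity of the constant-coefficient quasi-solution that supplies it.
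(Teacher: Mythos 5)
Your proposal is correct and is essentially the paper's own argument: both proofs trap solutions of \eqref{geq1} between exactly solvable bounding equations via the comparison principle, feed in the data $\pm\xi(\tau-t,\theta_{-t}\omega)$, pass to the pullback limit using the explicit solution formula, and use the temperedness of $\xi^{-1}$ from \eqref{grecomxi} together with \eqref{asyom} to eliminate the initial-data term, with instability for $\lambda>0$ following from the strictly positive lower bound exactly as in the paper's \eqref{pgcslam_21}. The only difference is packaging: the paper substitutes $z=x^{-2}$ and compares with the linear equations \eqref{pgcslam_3}--\eqref{pgcslam_4} retaining the time-dependent $\beta(t)$ (coarsening to the constants $\beta_1-c_1$ and $\beta_0-c_2$ via \eqref{bcon1} only at the last step), whereas you compare directly with constant-coefficient copies of the cubic equation \eqref{seq1} and reuse \eqref{xplus} and \eqref{seq6} --- the same computation, since \eqref{seq1} is itself solved by that very substitution.
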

     
     \begin{proof}
     For $x>0$ we introduce a new variable
     $z = x^{-2}$. By \eqref{geq1} we find  
     that $z$  satisfies,
     \be\label{pgcslam_1}
     {\frac {dz}{dt}}
     =-2 \lambda z
     + 2 \beta (t)
     -2 z^{\frac 32} \gamma (t, z^{-\frac 12})
     -2 \delta z \circ {\frac {d \omega}{dt}},
      \quad  z(\tau) =  z_\tau.
     \ee
       By \eqref{gacon1} we have
     \be\label{pgcslam_2}
     -2 c_2 \le -2 z^{\frac 32} \gamma (t, z^{-\frac  12})
     \le -2c_1 , \quad \text{ for all}  \  z >0.
     \ee
     Consider the linear equations
     for $t>\tau$   with $\tau \in \R$,
     \be\label{pgcslam_3}
      {\frac {du}{dt}}
     =-2 \lambda u
     + 2 \beta (t)
     -2c_1  
     -2 \delta u  \circ {\frac {d \omega}{dt}},
     \quad u(\tau) =  u_\tau,
     \ee
     and
      \be\label{pgcslam_4}
      {\frac {dv}{dt}}
     =-2 \lambda  v
     + 2 \beta (t)
     -2c_2  
     -2 \delta v  \circ {\frac {d \omega}{dt}},
     \quad v(\tau) =  v_\tau.
     \ee
     Given   $\tau \in \R$ and $\omega \in \Omega$,
     the solutions $u$  and $v$ of \eqref{pgcslam_3}
     and \eqref{pgcslam_4}  are given by
    $$
     u(t, \tau, \omega, u_\tau)
     = e^{2\lambda (\tau -t)  + 2\delta (\omega (\tau) -
     \omega (t) )} u_\tau
     + 2 \int_\tau^t e^{2\lambda (r-t) + 2\delta (\omega (r)
     - \omega (t)) }
     (\beta (r) -c_1 ) dr,
   $$
     and
     $$
     v(t, \tau, \omega, v_\tau)
     = e^{2\lambda (\tau -t)  + 2\delta (\omega (\tau) -
     \omega (t) )} v_\tau
     + 2 \int_\tau^t e^{2\lambda (r-t) + 2\delta (\omega (r)
     - \omega (t)) }
     (\beta (r) -c_2 ) dr.
    $$
     Therefore,  for every
     $t \in \R^+$,    $\tau \in \R$ and $\omega \in \Omega$,
     we have
      \be\label{pgcslam_7}
     u(\tau, \tau-t,  \theta_{-\tau} \omega,  u_{\tau - t} )
     = e^{ -2\lambda  t  + 2\delta 
     \omega (-t) } u_{\tau -t}
     + 2 \int_{\tau-t}^\tau e^{2\lambda (r-\tau) + 2\delta \omega (r-\tau)
       }
     (\beta (r) -c_1 ) dr,
   \ee
   and
    \be\label{pgcslam_8}
     v(\tau, \tau-t,  \theta_{-\tau} \omega,  v_{\tau -t} )
     = e^{ -2\lambda  t  + 2\delta 
     \omega (-t) } v_{\tau -t}
     + 2 \int_{\tau-t}^\tau e^{2\lambda (r-\tau) + 2\delta \omega (r-\tau)
       }
     (\beta (r) -c_2 ) dr.
   \ee
      By \eqref{pgcslam_2} we see that
     $u$  and $v$ are super- and sub-solutions of
     \eqref{pgcslam_1}, respectively.
     Since  $x = z^{-\frac 12}$ for $x >0$,  we get,
      for every
     $t \in \R^+$,    $\tau \in \R$,  $\omega \in \Omega$
     and $x_{\tau -t} >  0$,
      \be\label{pgcslam_9}
     {\frac 1{ \sqrt{
     u(\tau, \tau-t,  \theta_{-\tau} \omega,  x^{-2}_{\tau - t} )
     }} }
     \le x(\tau, \tau-t,  \theta_{-\tau} \omega,  x_{\tau - t} )
     \le
       {\frac 1{ \sqrt{
     v(\tau, \tau-t,  \theta_{-\tau} \omega,  x^{-2}_{\tau - t} )
     }} }.
     \ee
     Similarly,  for $x_{\tau -t} <  0$, one can verify   that
     $-x$ satisfies \eqref{pgcslam_9}.   So 
     for every
     $t \in \R^+$,    $\tau \in \R$,  $\omega \in \Omega$
     and $x_{\tau -t}  \neq 0 $, we have
       \be\label{pgcslam_10}
     {\frac 1{ \sqrt{
     u \left  (\tau, \tau-t,  \theta_{-\tau} \omega,    
      x^{-2}_{\tau -t} 
     \right )
     }} }
     \le  | x(\tau, \tau-t,  \theta_{-\tau} \omega, 
        x_{\tau -t}  ) |
     \le
       {\frac 1{ \sqrt{
     v \left (\tau, \tau-t,  \theta_{-\tau} \omega,  
       x^{-2}_{\tau -t} 
     \right  )
     }} },
\ee
from which we get, 
    for all   $\tau \in \R$
    and  $\omega \in \Omega$, 
           \be\label{pgcslam_11}
            {\frac 1{ \sqrt{
     u \left  (\tau, \tau-t,  \theta_{-\tau} \omega,    
       \ \xi^{-2} (\tau - t,  \theta_{-t} \omega   )
     \right )
     }} }
     \le  | x(\tau, \tau-t,  \theta_{-\tau} \omega, 
          \pm  \xi (\tau - t,  \theta_{-t} \omega   ) ) |
       $$
       $$
     \le
       {\frac 1{ \sqrt{
     v \left (\tau, \tau-t,  \theta_{-\tau} \omega,  
         \xi^{-2} (\tau - t,  \theta_{-t} \omega   )
     \right  )
     }} },
     \ee
     where $\xi$ is given by 
     \eqref{gcomxi}.
     Letting $t \to \infty$,  by
     \eqref{asyom}, 
      \eqref{grecomxi},
      \eqref{glamatt_1}-\eqref{glamatt_2}
     and \eqref{pgcslam_7}-\eqref{pgcslam_8} we obtain
     from \eqref{pgcslam_11}  that,   
     for all
      $\tau \in \R$  and $\omega \in \Omega$, 
      $$
         {\frac 1{ \sqrt{
       2 \int_{-\infty}^0
      e^{2\lambda  r + 2\delta \omega r
       }
     (\beta (r+\tau) -c_1 ) dr
      }} }
       \le
       |  x^\pm_\lambda (\tau ,    \omega   ) |
    \le 
       {\frac 1{ \sqrt{
       2 \int_{-\infty}^0
      e^{2\lambda  r + 2\delta \omega r
       }
     (\beta (r+\tau) -c_2 ) dr
      }} }.
     $$
     Therefore,  by \eqref{bcon1} we have,   for all
      $\tau \in \R$  and $\omega \in \Omega$,  
      \be\label{pgcslam_20}
         {\frac 1{ \sqrt{
       2 (\beta_1 -c_1)  \int_{-\infty}^0
      e^{2\lambda  r + 2\delta \omega r
       } dr
      }} }
       \le
       |  x^\pm_\lambda (\tau ,    \omega   ) |
    \le 
       {\frac 1{ \sqrt{
       2(\beta_0 -c_2)  \int_{-\infty}^0
      e^{2\lambda  r + 2\delta \omega r
       }  dr
      }} },
     \ee
     which implies $x^+_\lambda (\tau, \omega)>0$
     and $x^-_\lambda (\tau, \omega)<0$.
     On the other hand,  for every $x_0 \neq 0$,
  by \eqref{pgcslam_7}
  and \eqref{pgcslam_10} we get
  \be\label{pgcslam_21}
  \liminf_{t \to \infty}
   | x(\tau, \tau-t,  \theta_{-\tau} \omega, 
        x_{0}  ) | 
    \ge
     {\frac 1{ \sqrt{
       2(\beta_1 -c_1)  \int_{-\infty}^0
      e^{2\lambda  r + 2\delta \omega r
       } dr
      }} }.
     \ee
     By \eqref{pgcslam_21},    
     the zero solution of \eqref{geq1} is unstable
     in $\R$.
     Thus,  by \eqref{pgcslam_20} we conclude   the
     proof.
        \end{proof}

           We   now  present 
        pitchfork
        bifurcation of random  complete quasi-solutions of 
          \eqref{geq1}.
  
  \begin{thm}
  \label{thmgs}
   Suppose  \eqref{bcon1} and \eqref{gacon1}  hold.
     Then the  random complete quasi-solutions of \eqref{geq1}
     undergo a  stochastic pitchfork bifurcation at $\lambda =0$.
     More precisely:
     
         (i) If $\lambda \le 0$,   then $x=0$  is the 
     unique   random complete quasi-solution
     of \eqref{seq1} which is pullback asymptotically stable
     in $\R$.
     In this case,  the equation has a trivial $\cald$-pullback 
     attractor $\cala_\lambda = \{ \cala_\lambda (\tau, \omega) = \{0\}: \tau
     \in \R, \omega \in \Omega \}$.
     
     (ii) If $\lambda>0$,  then the zero solution loses its stability
     and the equation has two more  tempered
     random complete quasi-solutions
     $x^+_\lambda >  0$  and $x^-_\lambda  < 0$
     such that 
    \be\label{thmgs_1}
     \lim_{\lambda \to 0}
     x^\pm_\lambda (\tau, \omega) = 0,
     \quad \text{  for all } \ \tau \in \R
     \ \text{  and }  \omega \in \Omega.
     \ee
     In this case,  equation \eqref{seq1}
     has a $\cald$-pullback attractor
     $\cala_\lambda =\{ \cala_\lambda(\tau, \omega) 
     = [x^-_\lambda (\tau, \omega), 
     x^+_\lambda (\tau, \omega) ]: \tau \in \R,
     \omega \in \Omega \}$. 
     \end{thm}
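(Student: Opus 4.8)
The plan is to treat equation \eqref{geq1} as a special case of the general system \eqref{noneq1} through the identification \eqref{f2}, for which conditions \eqref{f1} and \eqref{g1}--\eqref{g2} have already been verified (with $\nu = 1$, $h(t) = c$ and $g(t) = 0$). Consequently Theorems \ref{nonatt} and \ref{nonatt2} and Lemma \ref{gcomslam} are all available, so most of the work reduces to extracting the bifurcation picture from the sign information and the two-sided bound \eqref{gcomslam1}, in close analogy with Theorem \ref{thms} and Lemma \ref{zerost}.

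For part (ii), that is $\lambda > 0$, the existence and the structure $\cala_\lambda = [x^-_\lambda, x^+_\lambda]$ of the attractor are given directly by Theorem \ref{nonatt2}, while the strict signs $x^+_\lambda > 0$, $x^-_\lambda < 0$ and the loss of stability of the zero solution are furnished by Lemma \ref{gcomslam}. It then remains only to establish the bifurcation limit \eqref{thmgs_1}. For this I would invoke the upper estimate in \eqref{gcomslam1} and let $\lambda \to 0^+$: by Fatou's lemma together with \eqref{alzero} one has $\liminf_{\lambda \to 0}\int_{-\infty}^0 e^{2\lambda r + 2\delta\omega(r)}\,dr \ge \int_{-\infty}^0 e^{2\delta\omega(r)}\,dr = \infty$, exactly as in the derivation of \eqref{thms_1}, so the upper bound for $|x^\pm_\lambda(\tau,\omega)|$ tends to zero and \eqref{thmgs_1} follows at once.

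For part (i), that is $\lambda \le 0$, the triviality of the attractor is again read off from \eqref{gcomslam1}: when $\lambda \le 0$, conditions \eqref{asyom} and \eqref{alzero} force $\int_{-\infty}^0 e^{2\lambda r + 2\delta\omega(r)}\,dr = \infty$ (the integrand no longer decays at $-\infty$, since for $\lambda<0$ one bounds $\omega(r)$ via \eqref{asyom} and for $\lambda=0$ one uses \eqref{alzero} directly), so the upper bound in \eqref{gcomslam1} vanishes, $x^\pm_\lambda \equiv 0$, and hence $\cala_\lambda = [0,0] = \{0\}$. Uniqueness of $x = 0$ among $\cald$-complete quasi-solutions then follows from the characterization of $\cala_\lambda$ in Theorem \ref{nonatt}, which identifies the fibre $\cala_\lambda(\tau,\omega)$ with the set of values at $(\tau,\omega)$ of all $\cald$-complete quasi-solutions of $\Phi$.

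The remaining and most delicate point is the pullback asymptotic stability of $x = 0$ for $\lambda \le 0$, which I would obtain by the comparison technique already set up in Lemma \ref{gcomslam}. The sandwich \eqref{pgcslam_10} bounds $|x(\tau,\tau-t,\theta_{-\tau}\omega,x_0)|$ above by $v^{-1/2}$, where $v$ solves the exactly solvable linear equation \eqref{pgcslam_4} whose coefficient $\beta - c_2 \ge \beta_0 - c_2 > 0$ stays positive; this reduces the whole question to the $\gamma$-free situation of Lemma \ref{zerost}. Pullback attraction $x \to 0$ then follows because the integral part of $v$ in \eqref{pgcslam_8} diverges as $t \to \infty$, while Lyapunov stability follows from the same $\varepsilon$--$\delta$ construction as in the proof of Lemma \ref{zerost}, applied to $v$. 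The one subtlety I expect to be the main obstacle is the critical case $\lambda = 0$: there the decay is driven not by an exponential factor but solely by the divergence in \eqref{alzero}, so it is precisely the comparison with \eqref{pgcslam_4} and the strict inequality $\beta_0 - c_2 > 0$ that make the stability argument go through.
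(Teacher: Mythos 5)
Your proposal is correct and follows essentially the same route as the paper: both obtain the attractor structure $\cala_\lambda=[x^-_\lambda,x^+_\lambda]$ from Theorem \ref{nonatt2} via the identification \eqref{f2}, take the strict signs and the instability of zero from Lemma \ref{gcomslam}, and deduce both the triviality for $\lambda\le 0$ and the limit \eqref{thmgs_1} from the upper bound in \eqref{gcomslam1} combined with \eqref{alzero} and Fatou's lemma. The only difference is one of detail: the paper simply asserts the pullback asymptotic stability of $x=0$ for $\lambda\le 0$ from \eqref{glamatt}, whereas you correctly flesh out this point by comparison with the linear equation \eqref{pgcslam_4} (whose coefficient satisfies $\beta-c_2\ge\beta_0-c_2>0$), reducing it to the $\varepsilon$--$\delta$ and divergence arguments of Lemma \ref{zerost}, including the critical case $\lambda=0$ driven by \eqref{alzero}.
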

     
     \begin{proof}
      (i)  If  $\lambda \le 0$,  by
      \eqref{alzero} and  \eqref{gcomslam1} we have,
      for every $\tau \in \R$   and $\omega \in \Omega$,
  \be\label{pthmgs_1}
   |x_\lambda ^\pm (\tau, \omega)|
   \le
     {\frac 1{
   \sqrt{2(\beta_0 -c_2) \int^0_{-\infty}
   e^{ 2 \delta \omega (r)} dr
     } } }
     = 0,
   \ee
    and hence $x_\lambda ^\pm (\tau, \omega)=0$.
    In this case, by \eqref{glamatt} we see that
     zero  is the 
     only   complete quasi-solution
     of \eqref{geq1} which is pullback asymptotically stable.
     In addition,    $ \cala (\tau, \omega) = \{0\}$
     for all  $ \tau
     \in \R$  and  $\omega \in \Omega  $.
     
     (ii) If $\lambda>0$, by Lemma \ref{gcomslam} we
     know that $x=0$ is unstable. Moreover, by 
     \eqref{gcomslam1}, \eqref{pthmgs_1} 
     and Fatou's lemma, we have
      $$
   \limsup_{\lambda \to  0}
    |x_\lambda ^\pm (\tau, \omega)|
   \le \limsup_{\lambda \to  0}
     {\frac 1{
   \sqrt{2(\beta_0 -c_2) \int^0_{-\infty}
   e^{ 2\lambda r+ 2 \delta \omega (r)} dr
     } } }
     \le
     {\frac 1{
   \sqrt{2(\beta_0 -c_2) \int^0_{-\infty}
   e^{ 2 \delta \omega (r)} dr
     } } }
     = 0,
    $$ 
    which  implies  \eqref{thmgs_1}
    and  thus  
    completes   the proof.
     \end{proof}

         As a consequence of Theorem \ref{thmgs},
         we have   the following     pitchfork
        bifurcation of random   periodic solutions of 
          \eqref{geq1}.
  
  \begin{thm}
  \label{thmgps}
  Let $T$ be a positive number such that
   $\beta(t+T) =\beta (t)$
  and $\gamma (t+T, x) = \gamma (t,x)$
  for all $t \in \R$  and $x\in \R$.
  If   \eqref{bcon1} and \eqref{gacon1}  hold,
    then    random periodic solutions of \eqref{geq1}
     undergo a  stochastic pitchfork bifurcation at $\lambda =0$.
     More precisely:
     
         (i) If $\lambda \le 0$,   then $x=0$  is the 
     unique   random  periodic solution
     of \eqref{seq1} which is pullback asymptotically stable
     in $\R$.
     In this case,  the equation has a trivial $\cald$-pullback 
     attractor.
     
     (ii) If $\lambda>0$,  then the zero solution loses its stability
     and the equation has two more random periodic solutions
     $x^+_\lambda > 0$  and $x^-_\lambda  < 0$
     such that 
    \be\label{thmgps_1}
     \lim_{\lambda \to 0}
     x^\pm_\lambda (\tau, \omega) = 0,
     \quad \text{  for all } \ \tau \in \R
     \ \text{  and }  \omega \in \Omega.
     \ee
     In this case,  equation \eqref{seq1}
     has a $\cald$-pullback attractor
     $\cala =\{  [x^-_\lambda (\tau, \omega), 
     x^+_\lambda (\tau, \omega) ]: \tau \in \R,
     \omega \in \Omega \}$. 
     \end{thm}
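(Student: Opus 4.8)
The plan is to deduce everything except the periodicity from Theorem~\ref{thmgs}, so that the only genuinely new point is the $T$-periodicity of the bifurcating complete quasi-solutions $x^\pm_\lambda$ in $\tau$. Indeed, under \eqref{bcon1} and \eqref{gacon1}, Theorem~\ref{thmgs} already yields that $x=0$ is the unique pullback asymptotically stable random complete quasi-solution of \eqref{geq1} for $\lambda\le 0$ (with trivial attractor), and that for $\lambda>0$ the zero solution is unstable while the tempered complete quasi-solutions $x^+_\lambda>0$ and $x^-_\lambda<0$ satisfy \eqref{thmgps_1}, with $\cala_\lambda(\tau,\omega)=[x^-_\lambda(\tau,\omega),x^+_\lambda(\tau,\omega)]$. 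Since the constant $0$ is trivially $T$-periodic and every random periodic solution is in particular a random complete quasi-solution, part~(i) is immediate; it then remains only to show the $T$-periodicity of $x^\pm_\lambda$ in part~(ii), after which they qualify as random periodic solutions in the sense of Definition~\ref{persol}.

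The key step is the periodicity identity for the solution map: for all $s\ge\sigma$, $\omega\in\Omega$ and $x_0\in\R$,
\be
x(s+T,\sigma+T,\omega,x_0)=x(s,\sigma,\theta_T\omega,x_0).
\ee
To prove it I would fix the right-hand data and set $z(s)=x(s+T,\sigma+T,\omega,x_0)$. Differentiating and using the $T$-periodicity $\beta(\cdot+T)=\beta(\cdot)$ and $\gamma(\cdot+T,x)=\gamma(\cdot,x)$, together with the elementary chain-rule identity $\tfrac{d}{ds}\omega(s+T)=\tfrac{d}{ds}(\theta_T\omega)(s)$ (valid since $\theta_T\omega(s)=\omega(s+T)-\omega(T)$), one checks that $z$ solves \eqref{geq1} driven by the shifted path $\theta_T\omega$ with $z(\sigma)=x_0$. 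The uniqueness of solutions noted after \eqref{phi} then forces $z(s)=x(s,\sigma,\theta_T\omega,x_0)$, giving the claimed identity; in particular $\Phi$ is a periodic cocycle with period $T$.

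Finally I would combine this identity with the explicit limit formulas \eqref{glamatt_1}--\eqref{glamatt_2}. Because the reference super-solution $\xi$ in \eqref{gcomxi} is here independent of $\tau$, one has $\xi(\tau+T-t,\theta_{-t}\omega)=\xi(\tau-t,\theta_{-t}\omega)$; hence, writing out $x^+_\lambda(\tau+T,\omega)$, applying the periodicity identity with $s=\tau$, $\sigma=\tau-t$ and $\omega$ replaced by $\theta_{-\tau-T}\omega$, and using the group law $\theta_T\theta_{-\tau-T}=\theta_{-\tau}$, collapses the expression to $\lim_{t\to\infty}x(\tau,\tau-t,\theta_{-\tau}\omega,\xi(\tau-t,\theta_{-t}\omega))=x^+_\lambda(\tau,\omega)$, and likewise for $x^-_\lambda$. (Equivalently, periodicity of $\Phi$ makes $\cala_\lambda(\cdot+T,\cdot)$ a $\cald$-pullback attractor, hence equal to $\cala_\lambda$ by the uniqueness in Proposition~\ref{att}, so the interval endpoints $x^\pm_\lambda$ are $T$-periodic.) The characterization of $\cala_\lambda$ and the limit \eqref{thmgps_1} are inherited verbatim from Theorem~\ref{thmgs}. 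The main obstacle is the careful verification of the periodicity identity, specifically handling the Stratonovich term under the time shift via the chain-rule identity for $\omega$ and then invoking uniqueness; the remainder is bookkeeping with the group law.
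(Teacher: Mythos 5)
Your proposal is correct and follows essentially the same route as the paper: reduce via Theorem~\ref{thmgs} to the $T$-periodicity of $x^\pm_\lambda$, and obtain that from the limit formulas \eqref{glamatt_1}--\eqref{glamatt_2} together with the periodicity of $\beta$, $\gamma$ and $\xi$ and the group law $\theta_T\theta_{-\tau-T}=\theta_{-\tau}$. The only difference is cosmetic: you spell out the shift identity $x(s+T,\sigma+T,\omega,x_0)=x(s,\sigma,\theta_T\omega,x_0)$ (and note the alternative via uniqueness of the attractor) where the paper leaves this step implicit in its change of variables.
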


     \begin{proof}
     By Theorem \ref{thmgs}, we only need  to
     show that  for  each  $\lambda>0$, 
      the tempered complete quasi-solutions
      $x^+_\lambda$  and $x^-_\lambda$
     in \eqref{glamatt} are $T$-periodic.
     Note that 
       $x^+_\lambda$  and $x^-_\lambda$
      are defined by
     \eqref{natt2_7} and \eqref{natt2_8}
     with $x^*$  and $x_*$ being replaced by
     $x^+_\lambda$  and $x^-_\lambda$, respectively.
     In the present case,
          by Lemma \ref{comsl}  we find   that
          $\xi$   given   by 
     \eqref{comxi} is $T$-periodic.
      Then, by  \eqref{natt2_7}
     and   the periodicity  of $\beta$ and $\gamma$, 
       we get
     for all  $\tau\in \R$  and $\omega \in \Omega$,
     $$
      x^+_\lambda (\tau +T, \omega)
     = \lim_{t \to \infty}
     x(\tau + T, \tau +T -t,  \theta_{-\tau -T} \omega,
     \xi(\tau +T -t, \theta_{-t} \omega ))
     $$
     $$
      = \lim_{t \to \infty}
     x(\tau, \tau  -t,  \theta_{-\tau } \omega,
     \xi(\tau  -t, \theta_{-t} \omega )) = x^+_\lambda (\tau , \omega),
     $$
     which shows   that $ x^+_\lambda$ is $T$-periodic.
     Similarly, one can verify that 
     $x^-_\lambda $ is also $T$-periodic.
     The details  are omitted.
        \end{proof}

  \section{Transcritical  bifurcation of  stochastic equations  }
\setcounter{equation}{0}
 
 In this section,  we discuss
   transcritical    bifurcation
 of  the   one-dimensional
 non-autonomous 
   stochastic equation
 given by
 \be
 \label{tgeq1}
 {\frac {dx}{dt}} =\lambda x -\beta (t) x^2
 + \gamma (t,x) + \delta x  \circ {\frac {d\omega}{dt}},
 \quad  x(\tau) = x_\tau , \quad t>\tau,
 \ee
 where  
   $\lambda$,  $\delta$  and 
     $\beta$ are    the same as in     \eqref{geq1};
     particularly,   $\beta$  satisfies
     \eqref{bcon1}.
     However, in the present case, we assume the smooth
     function $\gamma$ satisfies    the following condition:
  there exist two nonnegative numbers
 $c_1$  and $c_2$   with $c_1 \le c_2 <\beta_0$
 such that
\be\label{gacon2}
 c_1x^2  \le  \gamma (t,x) \le c_2 x^2
 \quad \text{  for all } \  t \in \R
 \quad  \text{ and } \  x \in \R.
 \ee
By \eqref{gacon2} we have  
 $\gamma (t,0) = 0$
 for   all   $t \in  \R$, and hence 
 $x=0$ is a  fixed point of  
 \eqref{tgeq1}.  
  We will first discuss transcritical 
 bifurcation   of \eqref{tgeq1} 
 when $\gamma$ is zero
 and then  consider the case when
 $\gamma$ satisfies \eqref{gacon2}.
 We will also study transcritical 
  bifurcation of   random periodic 
 (random almost periodic,  random almost
 automorphic)  solutions  
 of \eqref{tgeq1}. 
 
  When $\gamma$ is absent,   equation \eqref{tgeq1}  reduces   to 
 \be
 \label{tseq1}
 {\frac {dx}{dt}} =\lambda x -\beta (t) x^2
  + \delta x  \circ {\frac {d\omega}{dt}},
 \quad  x(\tau) = x_\tau ,
 \quad t>\tau .
 \ee
 This equation is exactly solvable and   for 
      every  $t, \tau  \in \R$
   with $t \ge \tau$, $\omega \in \Omega$
   and $x_\tau \in \R$, the solution is given by
   \be
   \label{tseq4}
   x(t, \tau, \omega, x_\tau)
   = {\frac {x_\tau} {
     e^{\lambda (\tau -t)
   +  \delta (\omega (\tau) - \omega (t) )} 
   +  x_\tau \int_\tau^t e^{ \lambda (r-t) + \delta (\omega (r)
   -\omega (t) ) } \beta (r) dr
   }}.
   \ee 
    It follows   from
     \eqref{tseq4} that  if  $x_0 >0$,  then the 
   solution $ x(t, \tau, \omega, x_0) $  is defined
   for   all   $t \ge \tau$. Similarly, if 
      $x_0 <0$,  then the 
   solution $ x(t, \tau, \omega, x_0) $  is defined
   for   all   $t  \le  \tau$.  
   Based on this fact, we will be
   able  to study   the dynamics of
   \eqref{tseq1} for positive 
   initial  data as $t \to \infty$ as well as
   the dynamics  for negative initial   data
   as
   $t \to -\infty$.
   In the pullback sense,  this
   allows  us   to  explore   the dynamics of solutions
    with positive 
   initial  data as $\tau \to - \infty$  or
   with   negative initial   data
   as
   $ \tau  \to  \infty$.
    By   \eqref{tseq4} we get  that, 
   for each $t \in \R^+$, $\tau \in \R$,
  $ \omega \in \Omega$ 
  and $x_{0} \in \R$, 
 $$
   x(\tau, \tau -t,  \theta_{-\tau}\omega,  x_{0} )
   = {\frac {x_{0}} {
     e^{ - \lambda  t 
   +  \delta   \omega (-t) } 
   +  x_{0} \int_{\tau -t}^\tau 
    e^{ \lambda (r-\tau) + \delta  \omega (r -\tau)
     } \beta (r) dr
   }}
  $$
     \be
   \label{tseq5} 
   = {\frac {x_{0}} {
     e^{ - \lambda  t 
   +  \delta   \omega (-t) } 
   +  x_{0} \int_{ -t}^0
    e^{ \lambda r  + \delta  \omega (r )
     } \beta (r +\tau ) dr
   }}.
   \ee
   
   By \eqref{asyom}  and
   \eqref{tseq5} we  obtain,
   for every $\lambda >0$  and  $x_{0}>0$,
       \be
   \label{tseq6}
      \lim_{t \to \infty}
   x(\tau, \tau -t,  \theta_{-\tau}\omega,  x_{0} )
   =
    {\frac {1} {
        \int_{-\infty}^0
    e^{  \lambda r +  \delta  \omega (r)
     } \beta (r +\tau) dr
     }}.
   \ee
   Analogously,  by \eqref{tseq5}  we  obtain,
     for    every
   $\lambda < 0$  and  $x_{0}<0$,
    \be
   \label{tseq7}
      \lim_{t \to - \infty}
   x(\tau, \tau -t,  \theta_{-\tau}\omega,  x_{0} )
   = 
    {\frac {-1} {
       \int^{\infty}_0
    e^{ \lambda r + \delta  \omega (r)
     } \beta (r +\tau) dr
     }}.
     \ee
     Given  $\lambda \in  \R $, $\tau \in \R$  and
     $\omega \in \Omega$,  we set
     \be
     \label{txplus}
     x_{\lambda} (\tau, \omega) 
      =
   \left \{
   \begin{array}{ll}
      \left (
        \int_{-\infty}^0
    e^{  \lambda r +  \delta  \omega (r)
     } \beta (r +\tau) dr \right )^{-1}
        &  \text{ if } \lambda >0; \vspace{3mm}\\
    - \left (
       \int^{\infty}_0
    e^{ \lambda r + \delta  \omega (r)
     } \beta (r +\tau) dr
     \right )^{-1} &  \text{ if } \lambda <0.
   \end{array}
   \right.
     \ee
     By \eqref{txplus}  we see that
     for every fixed $\tau \in \R$,
     $x_\lambda (\tau, \cdot)$ is measurable.
     By an argument similar    to Lemma \ref{scso1} one
     can verify   that
     $x_\lambda$ is a tempered complete quasi-solution
     of \eqref{tseq1}.
     
     \begin{thm}
     \label{thmst} 
     If     \eqref{bcon1}  holds, 
    then   the random complete quasi-solutions of \eqref{tseq1}
     undergo a stochastic transcritical bifurcation
     at $\lambda =0$. More precisely:
     
     (i)  If $\lambda <0$,   then   \eqref{tseq1} has 
     two random complete quasi-solutions $x=0$   and 
     $x=x_\lambda$ given by \eqref{txplus}.  
      The zero solution is asymptotically stable in $(0, \infty)$
      and    pullback attracts  every  compact subset $K$ of $(0,\infty)$,
      i.e., 
     \be\label{thmst1}
         \lim_{t \to \infty}
   x(\tau, \tau -t,  \theta_{-\tau}\omega,  K )
   =0.
   \ee
     Moreover,  for all $\tau \in \R$  and $\omega
   \in \Omega$,  we have
   $x_\lambda (\tau, \omega) <0$ and 
   \be\label{thmst2}
   \lim_{\lambda \to 0} x_\lambda (\tau, \omega) = 0.
   \ee
    
     (ii)  If $\lambda >0$,  then   \eqref{tseq1} has 
     two random complete quasi-solutions $x=0$   and 
     $x=x_\lambda$ given \eqref{txplus}.
      The zero solution is  unstable   in $(0, \infty)$
      and  $x_\lambda$  pullback attracts  every  compact subset $K$ of $(0,\infty)$,
      i.e., 
     \be\label{thmst3}
         \lim_{t \to \infty}
   x(\tau, \tau -t,  \theta_{-\tau}\omega,  K )
   = x_\lambda(\tau, \omega).
   \ee
     Moreover,  for all $\tau \in \R$  and $\omega
   \in \Omega$, we have  $x_\lambda (\tau, \omega) >0$  and
    \be\label{thmst4}
   \lim_{\lambda \to 0} x_\lambda (\tau, \omega) = 0.
  \ee
      \end{thm}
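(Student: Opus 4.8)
The plan is to extract every assertion directly from the closed-form pullback solution \eqref{tseq5}, combined with the sublinear growth \eqref{asyom} and the divergence of the exponential integrals. Existence of the two complete quasi-solutions is already in hand: $x=0$ is a fixed point of \eqref{tseq1}, hence trivially a complete quasi-solution, and $x_\lambda$ in \eqref{txplus} was shown, by the argument paralleling Lemma \ref{scso1}, to be a tempered complete quasi-solution. So the remaining content is the stability dichotomy, the two attraction statements, the signs of $x_\lambda$, and the two bifurcation limits.

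For part (i) ($\lambda<0$) I would first prove the attraction \eqref{thmst1}. Discarding the nonnegative integral term in the denominator of \eqref{tseq5} gives, for $x_0$ in a compact set $K\subseteq[a,b]\subset(0,\infty)$,
\[
0 < x(\tau,\tau-t,\theta_{-\tau}\omega,x_0) \le b\, e^{\lambda t-\delta\omega(-t)},
\]
and since $\lambda<0$ while $\omega(-t)/t\to 0$ by \eqref{asyom}, the exponent tends to $-\infty$; this yields \eqref{thmst1} uniformly on $K$. For Lyapunov stability in $(0,\infty)$ I would note that the same bound on $[0,\infty)$ gives $x\le x_0\,e^{M(\omega)}$ with $M(\omega):=\sup_{t\ge 0}(\lambda t-\delta\omega(-t))<\infty$, because the continuous exponent tends to $-\infty$; choosing the stability radius $\varepsilon e^{-M(\omega)}$ keeps the (positive) solution below $\varepsilon$, and together with \eqref{thmst1} this gives pullback asymptotic stability of $0$ in $(0,\infty)$. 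The sign $x_\lambda<0$ is immediate from \eqref{txplus}, and for \eqref{thmst2} I would let $\lambda\uparrow 0$ and apply monotone convergence to $\int_0^\infty e^{\lambda r+\delta\omega(r)}\beta(r+\tau)\,dr$, whose limit $\int_0^\infty e^{\delta\omega(r)}\beta(r+\tau)\,dr\ge\beta_0\int_0^\infty e^{\delta\omega(r)}\,dr=\infty$ forces $x_\lambda\to 0$.

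For part (ii) ($\lambda>0$) the key observation is that the pullback limit in \eqref{tseq6} is the constant $x_\lambda(\tau,\omega)$, independent of the positive starting value. Rewriting \eqref{tseq5} as $\bigl(e^{-\lambda t+\delta\omega(-t)}/x_0+\int_{-t}^0 e^{\lambda r+\delta\omega(r)}\beta(r+\tau)\,dr\bigr)^{-1}$ and using that $e^{-\lambda t+\delta\omega(-t)}\to 0$ (again by \eqref{asyom}, now with $\lambda>0$) while the integral converges to its value over $(-\infty,0)$, I obtain convergence to $x_\lambda$, uniform for $x_0\in[a,b]$ since $e^{-\lambda t+\delta\omega(-t)}/x_0\le e^{-\lambda t+\delta\omega(-t)}/a$; this is \eqref{thmst3}. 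Instability of $0$ in $(0,\infty)$ then follows at once: taking $\varepsilon=\tfrac{1}{2}x_\lambda(\tau,\omega)>0$, any positive initial datum, however small, is driven by \eqref{thmst3} to within $\tfrac{1}{2}x_\lambda$ of $x_\lambda$, hence eventually outside $\caln_\varepsilon(0)$, so $0$ is not pullback Lyapunov stable. The sign $x_\lambda>0$ is clear, and \eqref{thmst4} is proved exactly as \eqref{thmst2}, now letting $\lambda\downarrow 0$ and using monotone convergence of $\int_{-\infty}^0 e^{\lambda r+\delta\omega(r)}\beta(r+\tau)\,dr$ to the divergent $\int_{-\infty}^0 e^{\delta\omega(r)}\beta(r+\tau)\,dr$.

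The one genuinely delicate point, where I expect the real work to sit, is the divergence of the single-exponent integrals $\int_0^\infty e^{\delta\omega(r)}\,dr$ and $\int_{-\infty}^0 e^{\delta\omega(r)}\,dr$ underlying \eqref{thmst2} and \eqref{thmst4}: the standing hypothesis \eqref{alzero} is recorded with exponent $2\delta$, whereas the transcritical substitution $y=x^{-1}$ produces the single exponent $\delta$. I would therefore either note that the full-measure invariant set $\tilde{\Omega}$ may be enlarged to satisfy these divergences for the exponent $\delta$ as well (they hold almost surely, since by recurrence of the Wiener process $\omega$ is positive, resp. negative, on a set of infinite Lebesgue measure), or reprove them directly by the same monotonicity argument used for \eqref{alzero}. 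Everything else is a direct manipulation of \eqref{tseq5} together with the growth bound \eqref{asyom}.
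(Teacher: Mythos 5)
Your proof is correct and follows essentially the same route as the paper: everything is read off from the explicit pullback formula \eqref{tseq5} together with \eqref{asyom}, the attraction \eqref{thmst3} comes from the two-sided bounds with $a^{-1}$ and $b^{-1}$ exactly as in the paper, and instability of $0$ for $\lambda>0$ is deduced from \eqref{thmst3} just as the paper does. Two harmless variants: for \eqref{thmst1} you drop the integral term and use the decaying factor $x_0 e^{\lambda t-\delta\omega(-t)}$ (as in Lemma \ref{zerost}), whereas the paper drops the exponential term and uses divergence of $\int_{-t}^0 e^{\lambda r+\delta\omega(r)}\beta(r+\tau)\,dr$ for $\lambda<0$, which follows from \eqref{asyom} alone since $\lambda r\to+\infty$ dominates $\delta\omega(r)=o(|r|)$; and you invoke monotone convergence where the paper uses Fatou --- both work, since $e^{\lambda r}$ is monotone in $\lambda$ on each half-line. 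You also write out the Lyapunov stability estimate via $M(\omega)=\sup_{t\ge0}(\lambda t-\delta\omega(-t))<\infty$, which the paper only cross-references (``similar to Theorem \ref{thmgs}''). Most valuably, the delicate point you flag is genuine: the limits \eqref{thmst2} and \eqref{thmst4} require $\int_0^\infty e^{\delta\omega(r)}\,dr=\infty$ and $\int_{-\infty}^0 e^{\delta\omega(r)}\,dr=\infty$, which do \emph{not} follow from \eqref{asyom} and are not literally covered by \eqref{alzero}, stated with exponent $2\delta$ for the pitchfork substitution $y=x^{-2}$; the transcritical substitution $y=x^{-1}$ produces exponent $\delta$. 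The paper uses the $\delta$-version silently, and your fix is the right one: these divergences hold almost surely (the Wiener path is nonnegative on a set of infinite Lebesgue measure), and the corresponding full-measure set is $\theta_t$-invariant, so $\tilde\Omega$ may be taken to satisfy the $\delta$-exponent analogue of \eqref{alzero} as well. No gaps beyond that patched one; your argument stands.
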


     \begin{proof}
     (i)  Let $K$  be a compact subset of $(0, \infty)$. Then by \eqref{tseq5}
     we get
     \be\label{pthmst_1}
     \limsup_{t \to \infty}
     \sup_{x_0 \in K }
      x(\tau, \tau -t,  \theta_{-\tau}\omega,  x_{0} )
   \le  \limsup_{t \to \infty} {\frac {1} { 
     \int_{ -t}^0 
    e^{ \lambda r + \delta  \omega (r )
     } \beta (r +\tau) dr
   }} .
   \ee
   On the other hand, 
   by  \eqref{bcon1}  and  \eqref{asyom} 
   we get,  for all $\tau \in \R$  and $\omega \in \Omega$,
   \be\label{pthmst_2}
    \lim_{t \to \infty} {\frac {1} { 
     \int_{ -t}^0 
    e^{ \lambda r + \delta  \omega (r )
     } \beta (r +\tau) dr
   }} =0.
   \ee
   By  \eqref{pthmst_1}-\eqref{pthmst_2} we obtain
   \eqref{thmst1}.
   The asymptotic stability of $x=0$ in $(0, \infty)$
   and the convergence \eqref{thmst2} can be proved
   by a   argument  similar to Theorem \ref{thmgs}. 
   
   (ii) Let $K=[a,b]$   with $a>0$. By 
   \eqref{tseq5} we have,  for all
   $x_0 \in K$,
     \be\label{pthmst_3}
    x(\tau, \tau -t,  \theta_{-\tau}\omega,  x_{0} )
    \ge
    {\frac {1} {
     e^{ - \lambda  t 
   +  \delta   \omega (-t) } a^{-1} 
   +   \int_{ -t}^0
    e^{ \lambda r  + \delta  \omega (r )
     } \beta (r +\tau ) dr,
   }}  
   \ee
   and 
    \be\label{pthmst_4}
    x(\tau, \tau -t,  \theta_{-\tau}\omega,  x_{0} )
   \le 
    {\frac {1} {
     e^{ - \lambda  t 
   +  \delta   \omega (-t) } b^{-1} 
   +   \int_{ -t}^0
    e^{ \lambda r  + \delta  \omega (r )
     } \beta (r +\tau ) dr
   }}.
\ee
 Since $\lambda>0$, by 
   \eqref{bcon1}  and  \eqref{asyom} we find
   that for all $\tau \in \R$ and
   $\omega \in \omega$, 
   the right-hand sides of \eqref{pthmst_3}
   and \eqref{pthmst_4} converge
   to  $x_\lambda(\tau, \omega)$ as $t \to \infty$,
   which implies \eqref{thmst3}.
   Note that    the instability of $x=0$
   in $(0, \infty)$ 
   is implied by 
   \eqref{thmst3}. We then  conclude   the proof.
      \end{proof}

      By \eqref{txplus} we see that if
      $\beta$ is a periodic function with period $T>0$,
      then so is $x_\lambda (\cdot, \omega)$
      for all $\omega \in \Omega$. 
      By an argument similar to Lemmas \ref{apb1}
      and \ref{aab1}, one can prove
      $x_\lambda (\cdot, \omega)$
      is   almost periodic (almost automorphic)
      provided $\beta$ is 
         almost periodic (almost automorphic).
         Based on this fact,  we have
         the following results   from
         Theorem \ref{thmst}.
         
          \begin{cor}
     \label{2thmst} 
    Suppose     \eqref{bcon1}  holds
     and $\beta: \R \to \R$
     is periodic   (almost periodic, almost automorphic).
   Then   the random  periodic 
   (almost periodic,  almost automorphic) solutions
     of \eqref{tseq1}
     undergo a stochastic transcritical bifurcation
     at $\lambda =0$. More precisely:
     
     (i)  If $\lambda <0$,   then   \eqref{tseq1} has 
     two   random periodic 
   (almost periodic,  almost automorphic) solutions
       $x=0$   and 
     $x=x_\lambda$ given by \eqref{txplus}.  
      The zero solution is asymptotically stable in $(0, \infty)$
      and  \eqref{thmst1}-\eqref{thmst2} are fulfilled. 
       
        (i)  If $\lambda > 0$,   then   \eqref{tseq1} has 
     two   random periodic 
   (almost periodic,  almost automorphic) solutions
       $x=0$   and 
     $x=x_\lambda$ given by \eqref{txplus}.  
      The zero solution is  unstable in $(0, \infty)$
      and  \eqref{thmst3}-\eqref{thmst4} are fulfilled. 
    \end{cor}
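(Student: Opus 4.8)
The plan is to obtain the corollary as a direct consequence of Theorem \ref{thmst}, the only additional ingredient being that the nontrivial complete quasi-solution $x_\lambda$ inherits whatever time-regularity the coefficient $\beta$ possesses. Once this inheritance is in place, every assertion in the corollary about stability, instability, pullback attraction, and the limits \eqref{thmst1}--\eqref{thmst4} is verbatim the content of Theorem \ref{thmst}, while the statement that $x=0$ and $x=x_\lambda$ both qualify as random periodic (almost periodic, almost automorphic) solutions follows at once from Definition \ref{persol}, since the constant map $\tau\mapsto 0$ trivially enjoys all three properties. Thus the whole proof reduces to transferring periodicity, almost periodicity, and almost automorphy of $\beta$ to $x_\lambda(\cdot,\omega)$ through the explicit formula \eqref{txplus}.

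The periodic case is immediate: if $\beta(t+T)=\beta(t)$ for all $t$, then replacing $\beta(r+\tau)$ by $\beta(r+\tau+T)$ in \eqref{txplus} leaves the integrand unchanged, so $x_\lambda(\tau+T,\omega)=x_\lambda(\tau,\omega)$ for every $\omega\in\Omega$ and both signs of $\lambda$. For the almost periodic case I would adapt Lemma \ref{apb1}. Fixing $\lambda>0$ (the case $\lambda<0$ is symmetric, with $\int_0^\infty$ in place of $\int_{-\infty}^0$), set $g(\tau,\omega)=\int_{-\infty}^0 e^{\lambda r+\delta\omega(r)}\beta(r+\tau)\,dr$, the reciprocal of $x_\lambda$. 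Given $\varepsilon>0$, almost periodicity of $\beta$ supplies a relatively dense set of almost periods $t_0$ for which $|\beta(t+t_0)-\beta(t)|$ is smaller than $\varepsilon\big(\int_{-\infty}^0 e^{\lambda r+\delta\omega(r)}\,dr\big)^{-1}$ uniformly in $t$; integrating this estimate gives $|g(\tau+t_0,\omega)-g(\tau,\omega)|<\varepsilon$ for all $\tau$, so $g(\cdot,\omega)$ is almost periodic. By \eqref{bcon1}, $g$ is bounded above by $\beta_1\int_{-\infty}^0 e^{\lambda r+\delta\omega(r)}\,dr$ and below by $\beta_0$ times the same integral, hence bounded away from zero, and therefore $x_\lambda(\cdot,\omega)=g(\cdot,\omega)^{-1}$ is almost periodic as well.

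For the almost automorphic case I would mirror Lemma \ref{aab1}. Given any sequence $\{\tau_n\}$, extract a subsequence $\{\tau_{n_m}\}$ and a function $h$ with $\beta(t+\tau_{n_m})\to h(t)$ and $h(t-\tau_{n_m})\to\beta(t)$ pointwise. The bound $\beta_0\le h\le\beta_1$ inherited from \eqref{bcon1} keeps the limiting integral well-defined, and the Lebesgue dominated convergence theorem, applied with the integrable dominating function $\beta_1 e^{\lambda r+\delta\omega(r)}$ (integrable on $(-\infty,0]$ for $\lambda>0$ by \eqref{bcon1} and \eqref{asyom}), pushes both limits through the integral and through the reciprocal in \eqref{txplus}. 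This yields a map $\zeta^\omega(\tau)=\big(\int_{-\infty}^0 e^{\lambda r+\delta\omega(r)}h(r+\tau)\,dr\big)^{-1}$ realizing the two-sided limits required by Definition \ref{persol}, so $x_\lambda(\cdot,\omega)$ is almost automorphic.

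The one step that demands care, and hence the main obstacle, is this last dominated-convergence argument: one must check that a single integrable dominating function controls $e^{\lambda r+\delta\omega(r)}\beta(r+\tau+\tau_{n_m})$ uniformly in $m$, and that the reciprocal operation commutes with both the forward and the backward limits. Both hold precisely because \eqref{bcon1} bounds $\beta$ and its limit $h$ between $\beta_0$ and $\beta_1$, keeping $g$ away from zero so that $y\mapsto y^{-1}$ is continuous on the relevant range. With the three regularity facts established, the conclusions of the corollary are nothing more than the statements of Theorem \ref{thmst} read off for the time-regular solution $x_\lambda$, and the proof is complete.
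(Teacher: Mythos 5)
Your proposal is correct and follows essentially the same route as the paper, which obtains the corollary by noting that periodicity of $x_\lambda$ is immediate from \eqref{txplus} and that almost periodicity and almost automorphy transfer from $\beta$ to $x_\lambda$ ``by an argument similar to Lemmas \ref{apb1} and \ref{aab1},'' after which everything is read off from Theorem \ref{thmst}. You have merely written out the details the paper leaves implicit (the reciprocal $g^{-1}$ in place of $g^{-1/2}$, the symmetric treatment of $\lambda<0$ via $\int_0^\infty$, and the dominated-convergence justification with dominating function $\beta_1 e^{\lambda r+\delta\omega(r)}$), all of which are accurate.
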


    Next, we consider bifurcation of \eqref{tgeq1}
    with $\gamma$ satisfying \eqref{gacon2}.
    In this case, we can associate two 
    exactly solvable 
    systems with \eqref{tgeq1}.
    Given $t, \tau \in \R$
    with $t> \tau$,  consider
    \be
 \label{ltgeq1}
 {\frac {dx}{dt}} =\lambda x - (\beta (t) -c_2)  x^2
  + \delta x  \circ {\frac {d\omega}{dt}},
 \quad  x(\tau) = x_\tau , \quad t>\tau,
 \ee
 and
   \be
 \label{ltgeq2}
 {\frac {dx}{dt}} =\lambda x - (\beta (t) -c_1)  x^2
  + \delta x  \circ {\frac {d\omega}{dt}},
 \quad  x(\tau) = x_\tau , \quad t>\tau.
 \ee
 By \eqref{gacon2} we find   that
 the solutions of \eqref{ltgeq1}
 and \eqref{ltgeq2} are
 super- and sub-solutions of
 \eqref{tgeq1}, respectively.
 The random complete quasi-solutions of
 \eqref{ltgeq1} and \eqref{ltgeq2}
 can be studied  as equation \eqref{tseq1}.
 Then by the comparison principle
 and the arguments discussed in the 
 previous  section, 
 we can obtain transcritical bifurcation
for  \eqref{tgeq1}. We here just 
present the results and
 will not repeat the details in this case.

     \begin{thm}
     \label{3thmst} 
     If     \eqref{bcon1} and \eqref{gacon2}  hold, 
    then   the random complete quasi-solutions of \eqref{tgeq1}
     undergo a stochastic transcritical bifurcation
     at $\lambda =0$. More precisely:
     
     (i)  If $\lambda <0$,   then   \eqref{tgeq1} has 
     two random complete quasi-solutions $x=0$   and 
     $x=x_\lambda$
     with $x_\lambda (\tau, \omega)<0$
     for all $\tau\in\R$  and $\omega \in \Omega$. 
      The zero solution is asymptotically stable in $(0, \infty)$
       and  \eqref{thmst1}-\eqref{thmst2} are fulfilled.

     (ii)  If $\lambda >0$,  then   \eqref{tgeq1} has 
     two random complete quasi-solutions $x=0$   and 
     $x=x_\lambda$
     with $x_\lambda (\tau, \omega)>0$
     for all $\tau\in\R$  and $\omega \in \Omega$. 
      The zero solution is  unstable   in $(0, \infty)$
      and  \eqref{thmst3}-\eqref{thmst4} are fulfilled.

      If, in addition, $\beta$  is a  periodic
function,  then so is
      $x_\lambda$  for $\lambda \neq 0$. 
      \end{thm}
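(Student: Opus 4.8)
The plan is to reduce Theorem~\ref{3thmst} to the exactly solvable case of Theorem~\ref{thmst} by means of the comparison principle, mirroring the passage from the typical equation \eqref{tseq1} to the general one through the auxiliary systems \eqref{ltgeq1} and \eqref{ltgeq2}. First I would record that, since $0\le c_1\le c_2<\beta_0$, both coefficient functions $\beta-c_2$ and $\beta-c_1$ are smooth and satisfy the two-sided bound \eqref{bcon1} with constants $\beta_0-c_2>0$ and $\beta_1-c_1$; hence Theorem~\ref{thmst} applies verbatim to \eqref{ltgeq1} and \eqref{ltgeq2}, producing for every $\lambda\neq 0$ tempered complete quasi-solutions $\overline{x}_\lambda$ and $\underline{x}_\lambda$ given by the analogue of \eqref{txplus} with $\beta$ replaced by $\beta-c_2$ and $\beta-c_1$, respectively. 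By \eqref{gacon2} the right-hand side of \eqref{ltgeq1} dominates that of \eqref{tgeq1}, which in turn dominates that of \eqref{ltgeq2}, so the comparison principle sandwiches every solution of \eqref{tgeq1} between the corresponding solutions of \eqref{ltgeq1} and \eqref{ltgeq2}; since $c_2>c_1$, one also has $\underline{x}_\lambda\le\overline{x}_\lambda$, both positive for $\lambda>0$ and both negative for $\lambda<0$.

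Next I would construct the nontrivial quasi-solution $x_\lambda$ by the monotone pullback procedure of Theorem~\ref{nonatt2}. Fixing $\lambda>0$, I set $x_\lambda(\tau,\omega)=\lim_{t\to\infty}x(\tau,\tau-t,\theta_{-\tau}\omega,\overline{x}_\lambda(\tau-t,\theta_{-t}\omega))$; because $\overline{x}_\lambda$ is itself a complete quasi-solution of the super-equation \eqref{ltgeq1}, the argument \eqref{natt2_1}--\eqref{natt2_6} shows this pullback sequence is nonincreasing in $t$, while the sub-solution bound keeps it above $\underline{x}_\lambda$, so the limit exists. As in \eqref{natt2_7} one checks $x_\lambda$ is a complete quasi-solution of \eqref{tgeq1}, and the sandwich $\underline{x}_\lambda\le x_\lambda\le\overline{x}_\lambda$ yields both temperedness of $x_\lambda$ and the two-sided estimate analogous to \eqref{gcomslam1}; in particular $x_\lambda>0$. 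The case $\lambda<0$ is symmetric, the relevant pullback being taken in the region $x<0$, where solutions are defined backward (cf.\ \eqref{tseq5}), giving $x_\lambda<0$.

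The remaining assertions I would obtain as follows. For $\lambda<0$, the attraction \eqref{thmst1} of a compact $K\subset(0,\infty)$ to zero is immediate from the sandwich $0\le x(\tau,\tau-t,\theta_{-\tau}\omega,K)\le\overline{x}(\tau,\tau-t,\theta_{-\tau}\omega,K)$ together with Theorem~\ref{thmst}(i) applied to \eqref{ltgeq1}, and the asymptotic stability of $x=0$ in $(0,\infty)$ follows likewise. For $\lambda>0$, instability of $x=0$ and the attraction \eqref{thmst3} come from the monotone pullback construction above. The bifurcation limits \eqref{thmst2} and \eqref{thmst4} follow by applying Fatou's lemma to the explicit lower and upper bounds on $|x_\lambda|$, exactly as in the proof of Theorem~\ref{thmgs}, the denominator integrals diverging as $\lambda\to0$ by the transcritical analogue of \eqref{alzero}. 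Finally, if $\beta$ is $T$-periodic, then the defining integrals of $\overline{x}_\lambda$ and $\underline{x}_\lambda$ are $T$-periodic in $\tau$, so these barriers are $T$-periodic, and the shift computation used in the proof of Theorem~\ref{thmgps} transfers the periodicity to $x_\lambda$.

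The step that requires the most care is the sharp pullback attraction \eqref{thmst3} for $\lambda>0$, that is, that the image $x(\tau,\tau-t,\theta_{-\tau}\omega,K)$ of an arbitrary compact $K\subset(0,\infty)$ collapses to the single value $x_\lambda(\tau,\omega)$. Comparison with \eqref{ltgeq1}--\eqref{ltgeq2} only traps this image in the band $[\underline{x}_\lambda,\overline{x}_\lambda]$, and the explicit solution formula that forced a unique limit in Theorem~\ref{thmst} is no longer available once $\gamma$ is present. I would close this by characterizing $x_\lambda$ as the maximal positive tempered complete quasi-solution with tempered reciprocal, obtained precisely as the monotone pullback limit above, and then showing, in the spirit of the uniqueness argument in Lemma~\ref{scso1}, that the pullback orbit from $K$ is eventually order-trapped beneath this extremal solution and converges up to it. This is transparent when $c_1=c_2$, in particular when $\gamma=0$, and is the delicate point in the general case.
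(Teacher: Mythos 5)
Your proposal is correct and coincides with the paper's own treatment: the paper ``proves'' Theorem \ref{3thmst} only by the sketch preceding its statement, namely that by \eqref{gacon2} the solutions of \eqref{ltgeq1} and \eqref{ltgeq2} are super- and sub-solutions of \eqref{tgeq1}, that these two exactly solvable equations are handled as \eqref{tseq1} was in Theorem \ref{thmst}, and that the comparison principle together with ``the arguments discussed in the previous section'' (the monotone pullback limits of Theorem \ref{nonatt2}, the estimates of Lemma \ref{gcomslam}, Fatou's lemma as in Theorem \ref{thmgs}, and the periodicity transfer as in Theorem \ref{thmgps}) yields the conclusions, with all details explicitly omitted. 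The delicate point you single out --- that when $c_1<c_2$ comparison only confines pullback orbits of a compact $K\subset(0,\infty)$ to the band between the quasi-solutions of \eqref{ltgeq2} and \eqref{ltgeq1}, so the sharp attraction \eqref{thmst3} requires a further argument --- is genuine and is not addressed by the paper either (indeed the analogous compact-attraction claim is conspicuously absent from Theorem \ref{thmgs} in the pitchfork case), so on this point your write-up is more candid than the source.
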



\begin{thebibliography}{99}

\bibitem{arn1}
L. Arnold, {\em Random Dynamical Systems}, Springer-Verlag, 1998.

 
\bibitem{arn2}
L. Arnold
and P. Boxler,
Stochastic bifurcation: instructive examples in dimension one,
{\em Diffusion Processes and Related Problems in Analysis,
Vol II:  Stochastic Flows},   241-256,  Birkhauser,  1992. 

 \bibitem{arn3}
L. Arnold and B. Schmalfuss,
Fixed points and attractors  for random dynamical systems,
{\em IUTAM Symposium on Advances in Nonlinear Stochastic
Mechanics},
19-28, Kluwer, 1996.

\bibitem{bab1}
A.V. Babin and M.I. Vishik,
{\em Attractors of Evolution Equations}, North-Holland,
Amsterdam, 1992.


\bibitem{bat1}
P.W. Bates,  H. Lisei and  K.  Lu,
 Attractors for stochastic lattice
 dynamical systems,
{\em Stoch. Dyn.},   {\bf 6}  (2006),      1-21.

\bibitem{bat2}
P.W. Bates,   K.  Lu   and B. Wang,
 Random attractors for  stochastic reaction-diffusion equations
on unbounded domains,  {\em J. Differential Equations},
 {\bf  246}   (2009),   845-869.

 
  \bibitem{car3}
 T.  Caraballo,  M. J.  Garrido-Atienza, 
  B. Schmalfuss,  and J.  Valero,
   Non-autonomous and random attractors for 
   delay random semilinear equations 
   without uniqueness,
   {\em  Discrete Contin. Dyn. Syst.},
   {\bf  21}  (2008),  415-443
   
   
   \bibitem{car4}
 T.   Caraballo, M.J.  Garrido-Atienza,
 B.  Schmalfuss  and
 J. Valero,  Asymptotic behaviour of a stochastic 
 semilinear dissipative functional equation 
 without uniqueness of solutions,
 {\em  Discrete Contin. Dyn. Syst. Ser. B},
 {\bf  14}  (2010),  439-455.
 
 \bibitem{car5}
  T. Caraballo, M.J.  Garrido-Atienza
  and T.  Taniguchi,
  The existence and exponential behavior of 
  solutions to stochastic delay evolution 
  equations with a fractional Brownian motion,
  {\em  Nonlinear Anal.},
  {\bf  74}  (2011),  3671-3684.
  
 
 \bibitem{car6}
 T. Caraballo  and  J.A. Langa,
 On the upper semicontinuity  of cocycle attractors
 for non-autonomous  and random
 dynamical systems,
 {  Dynamics of Continuous, Discrete and Impulsive
 Systems  Series A: Mathematical Analysis},
 {    10} (2003), 491-513.
 
  \bibitem{car7}
  T. Caraballo,  J.A.   Langa,  V.S. Melnik
  and J. Valero,
  Pullback  attractors for nonautonomous  and stochastic 
  multivalued dynamical systems,
  Set-Valued Analysis,  { 11} (2003),
  153-201.

 
 
  \bibitem{chu1}
    I. Chueshow, {\em Monotone Random Systems - Theory
    and Applications}, Lecture Notes in Mathematics 1779,
    Springer, Berlin, 2001.
    
 

    \bibitem{cra1}
    H. Crauel  and
    F. Flandoli,  Attractors for random dynamical systems,
    {\em Probab. Th. Re. Fields}, {\bf 100} (1994), 365-393.
    



    \bibitem{cra2}
    H. Crauel  and
    F. Flandoli, 
    Additive noise destroys  a pitchfork bifurcation,
     {\em J. Dyn. Diff. Eqns.}, {\bf 10} (1998), 259-274.
    
    
     
    
    
    \bibitem{dua3}
J. Duan and B. Schmalfuss,
 The 3D quasigeostrophic fluid dynamics
  under random forcing on boundary,
  {\em Comm. Math. Sci.},
  {\bf 1} (2003),  133-151.
    
  
 
 
  \bibitem{fen2}
   C.  Feng,  H.  Zhao   and  B.  Zhou
 Pathwise random periodic solutions of stochastic differential equations,
 {\em  J. Differential Equations}, 
 {\bf 251}  (2011),  119-149.



\bibitem{fin1}
 A.M. Fink,
 Almost Periodic Differential Equations,
 Lecture Notes in Mathematics 377,
 Springer-Verlag, New York,  1974.


     \bibitem{fla1}
    F. Flandoli and B. Schmalfuss,
Random attractors for
the 3D stochastic Navier-Stokes equation with multiplicative
noise,  {\em Stoch. Stoch. Rep.}, {\bf 59} (1996),  21-45.


\bibitem{gar1}
M.J. Garrido-Atienza and
B.  Schmalfuss,
Ergodicity of the infinite dimensional fractional Brownian motion,
{\em  J. Dynam. Differential Equations},
{\em  23}  (2011), 671-681.

 

 \bibitem{hal1}
  J.K.  Hale,   Asymptotic Behavior of
Dissipative Systems,
American Mathematical Society,
  Providence, RI, 1988.

     

\bibitem{huang1}
J. Huang and W. Shen,
Pullback attractors for nonautonomous and random 
parabolic equations on non-smooth domains,
{\em Discrete and Continuous Dynamical Systems},
{\bf 24} (2009),   855-882.



 \bibitem{lev1}
 B.M. Levitan and V.V. Zhikov,
 Almost Periodic Functions and Differential Equations,
 Cambridge University Press, Cambridge, 1983.

\bibitem{liu1}
Q. Liu and Y. Wang,
Phase-translation group actions on strongly monotone
skew-product semiflows,
{\em Transactions of American Mathematical Society},
{\bf 364} (2012), 3781-3804.


 
  
\bibitem{schm1}
B.  Schmalfuss, Backward cocycles  and attractors  
of stochastic differential equations,
{\em International Seminar on Applied Mathematics-Nonlinear 
Dynamics: Attractor Approximation and Global Behavior},    185-192,
Dresden,
1992.
 

  \bibitem{sel1}
 G.R. Sell, Topological Dynamics and Ordinary
 Differential Equations, Van Nostrand Reinhold,
 London, 1971.

   
  \bibitem{sel2}
 R. Sell and  Y. You,
 Dynamics of Evolutionary Equations,
 Springer-Verlag,
New York, 2002.

 
      

 \bibitem{she1}
 W. Shen and Y. Yi,
 Almost automorphic  and almost periodic dynamics in
 skew-product semiflows,
 {\em Mem. Amer. Math. Soc.}, {\bf 136} (1998), No. 647, 1-93.

 \bibitem{she2}
 W. Shen and Y. Yi,
 Dynamics of almost periodic scalar  parabolic equations,
 {\em J. Differential Equations}, {\bf 122} (1995),
 114-136.

  \bibitem{she3}
 W. Shen and Y. Yi,
  Asymptotic almost periodicity   of scalar
  parabolic equations with almost
  periodic time dependence,
  {\em J. Differential Equations}, {\bf 122} (1995),
  373-397.

    \bibitem{she4}
 W. Shen and Y. Yi,
 On minimal sets of scalar  parabolic  equations with
 skew-product structures,
 {\em Trans. Amer. Math. Soc.}, {\bf  347} (1995),
 4413-4431.

 \bibitem{she5}
 W. Shen and Y. Yi,
 Ergodicity of minimal sets in scalar parabolic equations,
 {\em J. Dynamics and  Differential Equations},
 {\bf 8}  (1996), 299-323.
  


\bibitem{tem1}
 R.  Temam,    Infinite-Dimensional Dynamical
Systems in Mechanics and Physics,
  Springer-Verlag,
  New York, 1997.


 
  

      
      
\bibitem{wan5}
      B. Wang,
      Sufficient and necessary criteria for
      existence of pullback attractors for
      non-compact random dynamical systems,
      {\em  J. Differential Equations}, 
      {\bf 253} (2012),  1544-1583.
      
 
   
   \bibitem{wan7}
   B. Wang, 
 Random attractors for non-autonomous stochastic
wave equations with multiplicative noise,
{\em Discrete and Continuous Dynamical Systems  Series A},
{\bf  34} (2014),  269-300.
 
   
   
   \bibitem{wangyi1}
   
   Y. Wang, Asymptotic symmetry in strongly monotone
   skew-product semiflows with applications,
   {\em Nonlinearity}, {\bf 22} (2009), 765-782.
   
  
  
   
  \bibitem{war1}
 J.R. Ward Jr.,
 Bounded and almost periodic solutions
 of semi-linear parabolic equations,
 {\em Rocky Mountain Journal
 of Mathematics}, {\bf  18} (1988),
 479-494.

  

     
         
     \bibitem{xu1}
     K. Xu, Bifurcations of random 
     differential equations in dimension one,
     {\em Random and Computational Dynamics},
     {\bf 1}  (1993), 277-305.
    
    
    
    \bibitem{yos1}
T. Yoshizawa, Stability  Theory and the Existence of Periodic Solutions
and Almost Periodic Solutions,
 Springer-Verlag, New York,  1975.

 \bibitem{zai1}
 S. Zaidman,
 Topics  in Abstract Differential Equations II,
 Pitman  Research  Notes in Mathematics Series  321,
 Longman Group Limited, England,
 1995.

 \bibitem{zhao1}
 H.   Zhao and  Z.  Zheng,
 Random periodic solutions of random dynamical systems,
 {\em  J. Differential Equations}, 
 {\bf 246}  (2009),   2020-2038.
 


 

\end{thebibliography}
\end{document}